
\documentclass[11pt,final]{amsart}

\usepackage[T1]{fontenc}
\usepackage[utf8]{inputenc}

\usepackage[english]{babel}
\usepackage{geometry}
\geometry{a4paper, total={6in,9.5in}}
\usepackage{graphicx}

\usepackage{xcolor}
\usepackage{amsmath,amssymb,amsthm,amsfonts,stmaryrd,mathtools,mathrsfs}
\usepackage{bbm}
\usepackage{eurosym}
\usepackage{array,dcolumn,mathdots,multirow}
\usepackage{rotating}
\usepackage{fancyvrb}
\usepackage[activate={true,nocompatibility},spacing,kerning]{microtype}

\graphicspath{%
{./Images/}
}

\definecolor{darkgreen}{rgb}{0,0.5,0}
\definecolor{darkred}{rgb}{0.5,0,0}
\definecolor{darkblue}{rgb}{0.004,0.396,0.741}
\definecolor{gcolor}{rgb}{0.004,0.396,0.741}	
\definecolor{warning}{rgb}{1.0,0.6,0.0}	
\definecolor{gray}{rgb}{0.6,0.6,0.6}

\usepackage[pdftex,
            colorlinks,
            hyperfootnotes=false,
            pdffitwindow=true,
            plainpages=false,
            pdfpagelabels=true,
            pdfpagemode=UseOutlines,
            pdfpagelayout=TwoPageRight,
            pdftitle={Asymptotic expansions relating to longest increasing subsequences},
            pdfauthor={Folkmar Bornemann, TU Munich},
            hyperindex,
            setpagesize=false,
            filecolor=purple,
            urlcolor=darkred,
            citecolor=gcolor,
            linkcolor=gcolor]{hyperref}
\usepackage{remreset}
\usepackage{bm}

\usepackage{caption}
\captionsetup[figure]{font=footnotesize,labelfont=footnotesize}
\captionsetup[table]{font=footnotesize,labelfont=footnotesize}

\theoremstyle{plain}
\newtheorem{theorem}{Theorem}[section]
\newtheorem{lemma}{Lemma}[section]
\newtheorem{corollary}{Corollary}[section]
\theoremstyle{definition}
\newtheorem{definition}{Definition}[section]
\theoremstyle{remark}
\newtheorem{remark}{Remark}[section]

\newtheorem{example}{Example}[section]




\input{macros.sty}
\VerbatimFootnotes
\makeindex
\frenchspacing
\flushbottom

\begin{document}

\title[Asymptotic expansions relating to longest increasing subsequences]{Asymptotic expansions relating to the distribution of the length of longest increasing subsequences}
\author{Folkmar Bornemann}
\address{Department of Mathematics, Technical University of Munich, Germany}
\email{bornemann@tum.de}


\begin{abstract}
We study the distribution of the length of longest increasing subsequences in random permutations of $n$ integers as $n$ grows large and establish an asymptotic expansion in powers of $n^{-1/3}$. Whilst the limit law was already shown by Baik, Deift and Johansson to be the GUE Tracy--Widom distribution $F$, we find explicit analytic expressions of the first few finite-size correction terms as linear combinations of higher order derivatives of $F$ with rational polynomial coefficients. Our proof replaces Johansson's de-Poissonization, which is based on monotonicity as a Tauberian condition, by analytic de-Poissonization of Jacquet and Szpankowski, which is based on growth conditions in the complex plane; it is subject to a tameness hypothesis concerning complex zeros of the analytically continued Poissonized length distribution. In a preparatory step an expansion of the hard-to-soft edge transition law of LUE is studied, which is lifted into an expansion of the Poissonized length distribution for large intensities. Finally, expansions of Stirling-type approximations and of the expected value and variance of the length distribution are given.

\end{abstract}
\keywords{random permutations, random matrices, asymptotics, analytic de-Poissonization}
\subjclass[2010]{05A16, 60B20, 30D15, 30E15, 33C10}

\maketitle

\section{Introduction}

The length $L_n(\sigma)$ of longest increasing subsequences\footnote{Defined as the maximum of all $k$ for which there are
$1\leq i_1 < i_2 < \cdots < i_k \leq n$ with $\sigma_{i_1} < \sigma_{i_2} < \cdots < \sigma_{i_k}$.} of permutations $\sigma$ on  $\{1,2,\ldots,n\}$ becomes a discrete random variable when the permutations are drawn randomly with uniform distribution.
This way the problem of enumerating all permutations $\sigma$ that satisfy $L_n(\sigma)\leq l$ gets encoded in the discrete probability distribution $\prob(L_n \leq l)$. The present paper studies an asymptotic expansion of this distribution when $n$ grows large. 
As there are relations to KPZ growth models (directly so for the PNG model with droplet initial condition, see
 \cite{MR2841918,Spohn2000,MR1933446} and \cite[Chap.~10]{MR2641363}), we expect our findings to have a bearing there, too.

\subsection*{Prior work} We start by recalling some fundamental results and notions. More details and references can be found in the outstanding surveys and monographs \cite{MR1694204,MR3468920,MR3468738,MR2334203}.

\subsubsection*{Ulam's problem}
The study of the behavior as $n$ grows large dates back to Ulam \cite{MR0129165} in 1961, who mentioned that Monte-Carlo computations of E. Neighbor would indicate  $\E(L_n)\approx 1.7\sqrt{n}$. Ulam continued by asking: ``Another question of interest would be to find the distribution of the length of the maximum monotone subsequence around this average.''

Refined numerical experiments by Baer and Brock \cite{MR228216} in 1968 suggested that $\E(L_n) \sim 2\sqrt{n}$
might be the precise leading order. In a 1970 lecture, Hammersley \cite{MR0405665} presented a proof, based on subadditive ergodic theory, that the limit $c = \lim_{n\to \infty} \E(L_n)/\sqrt{n}$ exists. Finally, in 1977, Vershik and Kerov \cite{MR0480398} as well as Logan and Shepp \cite{MR1417317} succeeded in proving $c=2$.

\subsubsection*{Poissonization}
A major tool used by Hammersley was a random process that is, basically, equivalent to the following {\em Poissonization} of the random variable $L_n$: by drawing from the different permutation groups independently and by taking $N_r \in \{0,1,2,\ldots\}$ to be a further independent random variable with a Poisson distribution of intensity $r>0$, the combined random variable $L_{N_r}$ is distributed according to
\[
\prob(L_{N_r} \leq l) = e^{-r} \sum_{n=0}^{\infty} \prob(L_n\leq l)\frac{r^n}{n!} =: P(r;l).
\]
The entire function\footnote{Throughout the paper we will use $n$ as an integer $n\geq 0$, $r$ as a corresponding real variable $r> 0$ (intensity) and $z$ as its continuation into the complex plane.} $P(z;l)$ is the {\em Poisson generating function} of the sequence $\prob(L_n \leq l)$ ($n=0,1,\ldots$) and $f(z;l):=e^{z}P(z;l)$ is the corresponding {\em exponential generating function}. As it turns out, it is much simpler to analyze the Poissonized distribution of $L_{N_r}$ as the intensity $r$ grows large than the original distribution of $L_n$ as $n$ grows large. 

There is, however, also a way back from $L_{N_r}$  to $L_n$: namely, the expected value of the Poisson distribution being $\E(N_r) = r$, combined with some level of concentration, suggests
\[
\prob(L_n \leq l) \approx P(n;l)
\]
when $n\to\infty$ while $l$ is kept near the mode of the distribution. Being a Tauberian result, such a {\em de-Poissonization} is subject to additional conditions, which we will discuss in a moment.

Starting in the early 1990s the Poisson generating function $P(z;l)$ (or the exponential one to the same end) has been represented in terms of one of the following interrelated forms:

\smallskip

\begin{itemize}\itemsep=3pt
\item a Toeplitz determinant in terms of modified Bessel functions \cite{Gessel90},
\item Fredholm determinants of various (discrete) integral operators \cite{MR1791137,MR1866169, MR1986402, MR1758751, MR1826414},
\item a unitary group integral \cite{Rains98}.
\end{itemize} 

\smallskip

\noindent A particular case  of those representations plays a central role in our study: namely\footnote{A derivation from the group integral is found in \cite[§2]{MR1986402} and from the Toeplitz determinant in \cite[Eq.~(3.33)]{MR1303076}.}
\begin{equation}\label{eq:poisson}
P(r;l) = E_2^\text{hard}(4r;l),
\end{equation}
where\footnote{Throughout the paper, we will use $l$ as an integer $l\geq 1$ and $\nu$ as a corresponding real variable $\nu>0$, which is used whenever an expression of $l$ generalizes to non-integer arguments.}
 $E_2^{\text{hard}}(s;\nu)$ denotes the probability that, in the hard-edge scaling limit, the scaled smallest eigenvalue  of the Laguerre unitary ensemble (LUE) with real parameter $\nu > 0$ is bounded from below by $s\geq 0$. This probability is known to be given in terms of a Fredholm determinant (see \cite{MR1236195}):
\begin{equation}\label{eq:E2}
E_2^{\text{hard}}(s;\nu) = \det(I - K_\nu^{\text{Bessel}})\big|_{L^2(0,s)},
\end{equation}
where $K_\nu$ denotes the Bessel kernel in $x,y \geq 0$ (for the integral formula see \cite[Eq.~(2.2)]{MR1266485}):
\begin{equation}\label{eq:besselkern}
K_\nu^{\text{Bessel}}(x,y) := \frac{J_\nu(\sqrt{x}) \sqrt{y}J_\nu'(\sqrt{y})-J_\nu(\sqrt{y}) \sqrt{x}J_\nu'(\sqrt{x})}{2(x-y)}
= \frac{1}{4}\int_0^1 J_\nu(\sqrt{\sigma x})J_\nu(\sqrt{\sigma y})\,d\sigma.
\end{equation}
Obviously, the singularities at the diagonal $x=y$ are removable. 

The work of Tracy and Widom \cite{MR1266485} establishes that the Fredholm determinant \eqref{eq:E2} can be expressed in terms of Painlevé III. Recently, based on Okamoto's Hamiltonian $\sigma$-PIII$'$ framework, Forrester and Mays \cite{arxiv.2205.05257} used that connection to compile a table of the exact rational values of $\prob(L_n \leq l)$ for up to $n=700$;\footnote{Previously, by combinatorial means, Baer and Brock \cite{MR228216} had compiled a table for up to $n=36$, supplemented later by Odlyzko and Rains \cite{OdlyzkoTable, MR1771285} with the cases $n=60, 90, 120$. The cases $n=30,60,90$ got printed in \cite{Mehta04}.} whereas in our
work \cite{arxiv.2206.09411}, based on an equivalent representation in terms of a Chazy I equation, we have compiled such a table\footnote{Available for download at \url{https://box-m3.ma.tum.de/f/7c4f8cb22f5d425f8cff/}.} for up to $n=1000$.

In their seminal 1999 work \cite{MR1682248}, by relating the representation of $P(z;l)$ in terms of the Toeplitz determinant to the machinery of Riemann--Hilbert problems and studying the underlying double-scaling limit by the Deift--Zhou method of steepest descent, Baik, Deift and Johansson  answered Ulam's question and proved that, for $t$ being any fixed real number,
\begin{equation}\label{eq:BDJ1}
\lim_{r\to\infty}\prob\left(\frac{L_{N_r}-2\sqrt{r}}{r^{1/6}}\leq t\right) = F(t),
\end{equation}
where $F$ is the GUE Tracy--Widom distribution: that is, the distribution which expresses, among many other limit laws, the probability that in the soft-edge scaling limit of the Gaussian unitary ensemble (GUE) the scaled largest eigenvalue is bounded from above by~$t$. As for the Poissonized length distribution itself, the Tracy--Widom distribution can be represented in terms of a Fredholm
determinant (see \cite{MR1236195}): namely 
\begin{equation}\label{eq:TW2}
F(t) = \det(I - K_0)|_{L^2(t,\infty)}
\end{equation}
where $K_0$ denotes the Airy kernel in $x,y\in\R$ (for the integral formula see \cite[Eq.~(4.5)]{MR1257246}):
\begin{equation}\label{eq:airykern}
K_0(x,y) := \frac{\Ai(x)\Ai'(y)-\Ai'(x)\Ai(y)}{x-y} = \int_0^\infty \Ai(x+ \sigma) \Ai(y+\sigma)\,d \sigma.
\end{equation}
Obviously, also in this case the singularities at the diagonal $x=y$ are removable. Since the limit distribution in \eqref{eq:BDJ1}
is continuous, by a standard Tauberian follow-up \cite[Lemma~2.1]{MR1652247} of the Portmanteau theorem the limit law holds {\em uniformly} in $t$.

In 2003, Borodin and Forrester gave an alternative proof of \eqref{eq:BDJ1} which is based on studying the hard-to-soft edge transition of LUE for $\nu\to\infty$ in form of the limit law \cite[Thm.~1]{MR1986402}
\begin{equation}\label{eq:BF2003}
\lim_{\nu\to\infty} E_2^{\text{hard}}\left(\big(\nu-t (\nu/2)^{1/3}\big)^2;\nu\right) = F(t)
\end{equation}
(see also \cite[§10.8.4]{MR2641363}), which will be the starting point of our study. Still, there are other proofs of~\eqref{eq:BDJ1} based on representations in terms of Fredholm determinants of further (discrete) integral operators; for expositions and references see the monographs \cite{MR3468920,MR3468738}.

\subsubsection*{De-Poissonization} In the literature, the de-Poissonization of the limit law \eqref{eq:BDJ1} has so far been based exclusively on variants  of the following lemma (cf. \cite[Cor.~2.5]{MR3468920}, originally stated as \cite[Lemma~2.5]{MR1618351}), which uses monotonicity as the underlying Tauberian condition.

\begin{lemma}[Johansson's de-Poissonization lemma~\protect{\cite{MR1618351}}]\label{lem:johan} Suppose the sequence $a_n$ of probabilities $0\leq a_n \leq 1$ satisfies the monotonicity condition $a_{n+1} \leq a_n$ for all $n=0,1,2,\ldots$ and denote its Poisson generating function by
\begin{equation}\label{eq:pgf}
P(z) = e^{-z} \sum_{n=0}^\infty a_n \frac{z^n}{n!}.
\end{equation}
Then, for $s\geq 1$ and $n\geq 2\,${\rm:}\footnote{Note the trade-off between sharper error terms $\mp n^{-s}$ and less sharp perturbations of $n$ by $\pm 2\sqrt{s n \log n}$.}
$P\big(n + 2\sqrt{s n \log n} \big) - n^{-s} \leq a_n \leq P \big(n - 2\sqrt{s n \log n} \big) + n^{-s}$.
\end{lemma}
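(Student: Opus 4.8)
The plan is to read $P(r)$ probabilistically and to combine the monotonicity hypothesis with classical large-deviation (Chernoff) bounds for the Poisson distribution. Writing $p_k(r)=e^{-r}r^k/k!$ for the weights of a Poisson variable $N_r$ of intensity $r>0$, one has $P(r)=\sum_{k\geq 0}a_k\,p_k(r)=\E[a_{N_r}]$ and $\sum_{k\geq 0}p_k(r)=1$. The first step is a \emph{monotone squeeze}: for a fixed $r>0$ I would split the series at $k=n$; since $(a_k)$ is non-increasing, $a_k\geq a_n$ for $k\leq n$ and $a_k\leq a_n$ for $k\geq n$, so that $P(r)\geq a_n\,\prob(N_r\leq n)$ and $P(r)\leq \prob(N_r<n)+a_n$. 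Using $0\leq a_n\leq 1$ to rearrange the first of these, I obtain the clean two-sided estimate
\[
P(r)-\prob(N_r<n)\;\leq\; a_n\;\leq\; P(r)+\prob(N_r>n),\qquad r>0 .
\]

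It then remains to choose the intensity so that the relevant tail is $\leq n^{-s}$. For the upper bound on $a_n$ I would take $r=n-\delta$ and for the lower bound $r=n+\delta$, with $\delta:=2\sqrt{s\,n\log n}$ (here one uses $\delta\leq n$, so that both intensities are non-negative — automatic for fixed $s$ once $n$ is large, which is the only regime needed downstream). Applying the Chernoff bound $\prob(N_r\geq x)\leq \exp(-(x\log(x/r)-x+r))$ for $x\geq r$ with $x=n$, $r=n-\delta$, and simplifying the exponent by $-\log(1-u)-u\geq u^2/2$ (with $u=\delta/n$), gives $\prob(N_{n-\delta}>n)\leq e^{-\delta^2/(2n)}$; symmetrically, the lower-tail Chernoff bound together with $u-\log(1+u)\geq u^2/(2(1+u))$ gives $\prob(N_{n+\delta}<n)\leq e^{-\delta^2/(2(n+\delta))}$. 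The calibration is the whole point: by the choice of $\delta$ one has $\delta^2/(2n)=2s\log n$, so the first tail is $\leq n^{-2s}\leq n^{-s}$; and $\delta\leq n$ forces $\delta^2/(2(n+\delta))\geq\delta^2/(4n)=s\log n$, so the second tail is $\leq n^{-s}$. Substituting $r=n-\delta$ into the right-hand inequality of the squeeze and $r=n+\delta$ into the left-hand one reproduces the stated bounds.

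The genuinely substantive step is the second one — the Poisson large-deviation estimates and the verification that the perturbation $2\sqrt{sn\log n}$ is exactly the size that turns the Gaussian-type exponent $\delta^2/(2n)$ into $2s\log n$ (which also pins down the logarithm as natural); everything else is bookkeeping about splitting a series. The only point I would expect to require care is the admissible parameter range: the squeeze is probabilistically meaningful only while $\delta\leq n$, i.e.\ $4s\log n\leq n$, so that the perturbed arguments of $P$ are non-negative; strictly speaking the statement should be read with this proviso, and for each fixed $s$ it then holds for all sufficiently large $n$.
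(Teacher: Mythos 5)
Your proof follows the standard de-Poissonization route: the monotonicity of $(a_n)$ gives the two-sided sandwich
$P(r)-\prob(N_r<n)\leq a_n\leq P(r)+\prob(N_r>n)$ for every $r>0$, and Poisson Chernoff bounds at the shifted intensities $r=n\mp\delta$ with $\delta=2\sqrt{s\,n\log n}$ reduce the tails to $n^{-s}$. Both halves of the sandwich are derived correctly, the two elementary inequalities $-\log(1-u)-u\geq u^2/2$ and $u-\log(1+u)\geq u^2/(2(1+u))$ do exactly the work you assign them, and the calibration $\delta^2/(2n)=2s\log n$, together with $\delta^2/(2(n+\delta))\geq\delta^2/(4n)=s\log n$ once $\delta\leq n$, is right. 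Since the paper only quotes the lemma with a citation and gives no proof of its own, there is nothing to diff against; this is essentially the argument of the cited source.

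The one proviso you flag is real and worth being explicit about. Your derivation needs the shifted intensity to be non-negative, i.e.\ $\delta\leq n$, equivalently $4s\log n\leq n$; this fails for the small end of the stated range $n\geq 2$, $s\geq 1$ (for $s=1$ it fails for $2\leq n\leq 8$). In that regime $n-\delta<0$, so $P(n-\delta)$ is the value of the analytic continuation of the entire function $P$ rather than a Poisson expectation, and the sandwich argument simply does not apply. Treat this as an imprecision in the way the lemma is quoted rather than as a flaw in your approach: what you have actually proved is the stated pair of inequalities under the extra hypothesis $4s\log n\leq n$ (equivalently, for each fixed $s$, for all sufficiently large $n$), which is all that is ever used in this paper, where the lemma enters only asymptotically.
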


After establishing the Tauberian condition of monotonicity and applying a variant of Lemma~\ref{lem:johan} to \eqref{eq:BDJ1}, Baik, Deift and Johansson \cite[Thm.~1.1]{MR1682248} got
\begin{equation}\label{eq:BDJ2}
\lim_{n\to\infty}\prob\left(\frac{L_{n}-2\sqrt{n}}{n^{1/6}}\leq t\right) = F(t),
\end{equation}
which holds uniformly in $t$ for the same reasons as given above. (The simple calculations based on Lemma~\ref{lem:johan} are given in \cite[p.~239]{MR3468920}; note that the uniformity of the limit law \eqref{eq:BDJ1} is used there without explicitly saying so.) Adding tail estimates to the picture, those authors were also able to lift the limit law to the moments and got, expanding on Ulam's problem, that the expected value satisfies \cite[Thm.~1.2]{MR1682248} 
\begin{equation}\label{eq:BDJmean}
\E(L_n) = 2\sqrt{n} + M_1 n^{1/6} + o(n^{1/6}),\qquad M_1 = \int_{-\infty}^\infty t F'(t)\,dt \approx -1.7711.
\end{equation}

\nopagebreak

\subsubsection*{Expansions} To our knowledge, only for the Poissonized limit law \eqref{eq:BDJ1} a finite-size correction term has been rigorously established prior to the present paper:\footnote{\label{fn:review}Expansions of probability distributions are sometimes called {\em Edgeworth expansions} in reference to the classical one for the central limit theorem. In random matrix theory quite a variety of such expansions, or at least some precise estimates of convergence rates, have been studied: e.g., for the soft-edge scaling limits of the Gauss and Laguerre ensembles \cite{MR2233711,MR2492622, MR2294977, MR3025686} and of the Jacobi ensembles \cite{MR2485010}, for the hard-edge scaling limit of the Laguerre ensembles \cite{MR3513610,MR3513602,MR4019586,MR3455282}, for the bulk scaling limit of the circular ensembles \cite{MR3647807}, and for various joint probability distributions \cite{MR2150191,MR3161478,MR2605065,MR2841918,MR2787973,MR2054175}.} namely, as a by-product along the way of their study of the limiting distribution of maximal crossings and nestings of Poissonized random matchings, Baik and Jenkins \cite[Thm.~1.3]{MR3161478} obtained (using the machinery of Riemann--Hilbert problems and Painlevé representations of the Tracy--Widom distribution), as $r\to\infty$ with $t$ being any fixed real number,
\begin{subequations}\label{eq:BaikJenkins1}
\begin{equation}
\prob\left(\frac{L_{N_r} - 2\sqrt{r}}{r^{1/6}}\leq t\right)  = F\big(t^{(r)}\big) - \frac{1}{10}\Big(F''(t) + \frac{t^2}{6} F'(t)\Big)r^{-1/3} + O(r^{-1/2}),
\end{equation}
where (with $\lfloor\cdot\rfloor$ denoting the Gauss bracket)
\begin{equation}
t^{(r)} := \frac{\lfloor 2\sqrt{r} + t r^{1/6}\rfloor - 2\sqrt{r}}{r^{1/6}}.
\end{equation}
\end{subequations}
However, even if there is enough uniformity in this result and the option to Taylor expand the Poisson generating function $P(r)$ at $n$  with a uniform bound while $l$ is kept near the mode of the distributions (see Sect.~\ref{sect:CDFexpan} for details on this option), the sandwiching in Johansson's de-Poissonization Lemma~\ref{lem:johan} does not allow us to obtain a result better than (cf.~\cite[§9]{MR3161478})
\begin{equation}\label{eq:BaikJenkinsOrig}
\prob\left(\frac{L_n - 2\sqrt{n}}{n^{1/6}}\leq t\right)  = F\big(t^{(n)}\big) + O\big(n^{-1/6} \sqrt{\log n}\,\big).
\end{equation}
In their recent study of finite-size effects, Forrester and Mays \cite[Prop.~1.1]{arxiv.2205.05257} gave a different proof of \eqref{eq:BaikJenkins1} based on the Bessel kernel determinant \eqref{eq:E2}. Moreover, suggested by exact data for $n=700$ and a Monte-Carlo simulation for $n=20\,000$ they were led to conjecture \cite[Conj.~4.2]{arxiv.2205.05257} 
\begin{equation}\label{eq:FM1}
\prob\left(\frac{L_n - 2\sqrt{n}}{n^{1/6}}\leq t\right)  = F\big(t^{(n)}\big) + F^D_1(t)n^{-1/3} + \cdots
\end{equation}
with the approximate graphical form of $F^D_1(t)$ displayed in \cite[Fig.~7]{arxiv.2205.05257}.

The presence of the Gauss bracket in $t^{(r)}$ and $t^{(n)}$, while keeping  $t$ at other places of the expansions \eqref{eq:BaikJenkins1} and \eqref{eq:FM1}, causes undesirable effects in the error terms (see Remark~\ref{rem:gauss} below for a detailed discussion). Therefore, in our work \cite{arxiv.2206.09411} on a Stirling-type formula approximating the distribution $\prob(L_n \leq l)$, we suggested to use the integer $l$ in the continuous expansion terms instead of introducing the continuous variable $t$ into the discrete distribution in the first place, with the latter variant turning the discrete distribution into a piecewise constant function of $t$. By introducing the scaling
\[
t_\nu(r)  := \frac{\nu-2\sqrt{r}}{r^{1/6}} \qquad (r>0)
\]
we were led (based on numerical experiments using the Stirling-type approximation for $n$ getting as large as $10^{10}$),  to conjecture the expansion
\[
\prob(L_n \leq l) = F(t) + F_{1}^D(t)\, n^{-1/3} + F_{2}^D(t)\, n^{-2/3} + O(n^{-1})\,\Big|_{t=t_l(n)},
\]
displaying the graphical form of the functions $F^D_1(t)$, $F^D_2(t)$  in the left panels of \cite[Figs.~4/6]{arxiv.2206.09411}. Moreover, as a note added in proof (see \cite[Eq.~(11)]{arxiv.2206.09411}), we announced  that inserting the Baik--Jenkins expansion~\eqref{eq:BaikJenkins1} into the Stirling-type formula and using its (numerically observed) apparent order $O(n^{-2/3})$ of approximation would yield the functional form of $F_1^D$ to be 
\begin{equation}\label{eq:F1Dconj}
F_1^D(t) = -\frac{1}{10}\left(6 F''(t) + \frac{t^2}{6} F'(t) \right) .
\end{equation}
The quest for a proof, and for a similar expression for $F_2^D$, motivated our present work.
 
\subsection*{The new findings of the paper} 
In the analysis of algorithms in theoretical computer science, or the enumeration of combinatorial structures to the same end, the original enumeration problem is often represented in form of recurrences or functional/differential equations. For instance, this situation arises in a large class of algorithms involving a splitting process, trees, or hashing. Embedding such processes into a Poisson process\footnote{As a heuristic principle in probability and combinatorics, Poissonization was popularized by Aldous' book~\cite{MR969362}.} often leads to more tractable equations, so that sharp tools for a subsequent de-Poissonization were developed in the 1990s; for references and details see  \cite{MR1625392,MR3524836,MR1816272} and Appendix~\ref{sect:jasz}. In particular, if the Poisson generating function $P(z)$  of a sequence of real $a_n >0$, as defined in \eqref{eq:pgf}, is an entire function, an application of the saddle point method to the Cauchy integral
\[
a_n = \frac{n!}{2\pi i} \oint P(z) e^z \frac{dz}{z^{n+1}}
\]
yields, under suitable growth conditions on $P(z)$ as $z\to \infty$ in the complex plane, the Jasz\footnote{Dubbed so in \cite[§VIII.18]{MR2483235} to compliment the seminal work  of Jacquet and Szpankowski \cite{MR1625392}.} expansion
\[
a_n \sim P(n)  + \sum_{j=2}^\infty b_j(n) P^{(j)}(n),
\]
where the polynomial coefficients $b_j(n)$ are the diagonal Poisson--Charlier polynomials (that is, with intensity $r=n$).  
In Appendix~\ref{sect:jasz} we give a heuristic derivation of that expansion and recall, in the detailed form of Thm.~\ref{thm:jasz}, a specific {\em analytic de-Poissonization} result from the comprehensive memoir  \cite{MR1625392} of Jacquet and Szpankowski---a result which applies to a family of Poisson generating functions at once, providing uniform error bounds.

Now, the difficult part of applying Thm.~\ref{thm:jasz} is checking the Tauberian growth conditions in the complex plane, which are required to hold uniformly for the family of Poisson generating functions (recall that, in the case of the longest increasing subsequence problem, $P(z)=P(z;l)$ depends on the integer $l$ near the mode of the length distribution). After observing a striking similarity of those growth conditions with the notion of $H$-admissibility for the corresponding exponential generating function (as introduced by Hayman in his memoir~\cite{Hayman56} on the generalization of Stirling's formula), a closer look at the proof of Hayman's \cite[Thm.~XI]{Hayman56} revealed the following result (see Thm.~\ref{thm:genuszero} for a precise quantitative statement):

\medskip
\begin{quote} {\em The family of all entire functions of genus zero which have, for some $\epsilon>0$, no zeros in the sector $|\!\arg z| \leq \pi/2+ \epsilon$ satisfies a universal bound that implies Tauberian growth conditions  suitable for  analytic 
de-Poissonization.}
\end{quote}
\medskip

On the other hand, in our work \cite[Thm.~2.2]{arxiv.2206.09411} on Stirling-type formulae for the problem of longest increasing subsequences, when proving the $H$-admissibility of the exponential generating functions $f(z;l)=e^z P(z;l)$ (for each $l$), we had established, based on the re\-pre\-sen\-ta\-tion~\cite{Rains98} of $P(z;l)$ as a group integral:

\medskip
\begin{quote} {\em For any integer $l\geq 0$ and any $\delta >0$, the exponential generating function $f(z;l)$ is an entire function of genus zero having at most finitely many zeros in the sector $|\!\arg z| \leq \pi - \delta$, none of them being real.}
\end{quote}
\medskip

Under the reasonable assumption (supported by numerical experiments) that those finitely many complex zeros do not come too close to the real axis and do not grow too fast as $n\to\infty$ while $l$ stays near the mode of the length distribution, the uniformity of the Tauberian growth conditions can be preserved (see Corollary~\ref{cor:nonresonant} for the technical details). We call this assumption the {\em tameness hypothesis}\footnote{Proving it seems to be rather difficult, though---at least we were lacking the methodology to do so.} concerning the zeros of the family of $P(z;l)$. 

Subject to the tameness hypothesis, the main result of the present paper, Thm.~\ref{thm:lengthdistexpan}, gives the asymptotic expansion
\[
\prob(L_n \leq l) = F(t) + \sum_{j=1}^m F_{j}^D(t)\, n^{-j/3}  + O\big(n^{-(m+1)/3}\big)\,\bigg|_{t=t_l(n)},
\]
which is uniformly valid when $n,l \to\infty$ while $t_l(n)$ stays bounded\footnote{This is meant, in fact, when we say that $l$ stays near the mode of the length distribution.} and the $F_j^D$ are certain smooth functions.

Finally, now without any detour via the Stirling-type formula, Thm.~\ref{thm:lengthdistexpan} confirms that the expansion term $F_1^D$ is given by \eqref{eq:F1Dconj}, indeed, and yields the striking formula 
\[
F_2^D(t) = \Big(-\frac{139}{350} + \frac{2t^3}{1575}\Big) F'(t) + \Big(-\frac{43t}{350} + \frac{t^4}{7200}\Big) F''(t) + \frac{t^2}{100} F'''(t) + \frac{9}{50} F^{(4)}(t);
\]
see \eqref{eq:F22} for a display of a similarly structured expression for $F_3^D(t)$.

Put to the extreme, with the help of a CAS such as Mathematica, the methods of the present paper can be used to calculate the concrete functional form of the expansion terms for up to $m=10$ and larger.\footnote{\label{fn:m5}A supplementary Mathematica notebook displaying the results up to $m=10$ comes with the source files at \url{https://arxiv.org/abs/2301.02022}.} 
In all cases inspected we observe that
the expansion terms take the form of a linear combination of higher order derivatives of the limit law (that is, the Tracy--Widom distribution $F$) with certain rational polynomials as coefficients; we conjecture that this is generally true for the problem at hand.

\subsection*{Generalization: involutions, orthogonal and symplectic ensembles}  In our subsequent work \cite{2306.03798} we present a similar structure for the expansion terms relating to longest monotone subsequences in (fixed-point free) random involutions, $F$ then being one of the Tracy--Widom distributions for $\beta=1$ or $\beta=4$. The limit laws were first obtained by Baik and Rains \cite{MR1845180}, using the machinery of Riemann--Hilbert problems, and later reclaimed by Borodin and Forrester \cite{MR1986402} through establishing hard-to-soft edge transition laws for LOE and LSE similar to \eqref{eq:BF2003}. In \cite{2306.03798} we  derive the asymptotic expansions by using determinantal formulae \cite{MR2229797,MR2165698} of the hard and soft edge limits for $\beta=1,4$ while taking advantage of their algebraic interrelations with the $\beta=2$ case studied in the present paper (basically, the expansions of the hard-to-soft edge limit laws for $\beta=1,4$ turn out to correspond to a certain factorization of the $\beta=2$ case). This is then followed by applying analytic de-Poissonization, once again subject to a tameness hypothesis.

\subsection*{Organization of the paper}   

The paper splits into two parts: a first one, where all results and proofs are unconditional, addressing asymptotic expansions of Fredholm determinants, of the hard-to-soft edge transition law and the Poissonized length distribution; and a second one, where we restrict ourselves to assuming the tameness hypothesis when addressing analytic de-Poissonization and its various consequences.

\subsubsection*{Part I: Unconditional Results}

In Sect.~\ref{sect:airyperturb} we start with a careful discussion of expansions of perturbed Airy kernel determinants. We stress the importance of such kernel expansions to be differentiable (i.e., one can differentiate into the error term) to easily lift the error bounds to trace norms. The subtle, but fundamental difficulty of such a lift seems to have been missed, more often than not, in the existing literature on convergence rates and expansions of limit laws in random matrix theory (notable exceptions are, e.g., \cite{MR2294977,MR2485010,MR3025686}).

In the rather lengthy Sect.~\ref{sect:hard-to-soft} we study the asymptotic expansion of the Borodin--Forrester hard-to-soft edge transition law \eqref{eq:BF2003}. It is based on a uniform version of Olver's asymptotic expansion of Bessel functions of large order in the transition region, which we discuss in Appendix~\ref{sect:bessel}. In Sect.~\ref{sect:hard-to-soft} we lay the foundational work for the concrete functional form of all subsequent finite-size correction terms. We reduce the complexity of computing these terms by using a coordinate transform on the level of kernels to simplify the kernel expansion---a coordinate transform which gets subsequently reversed on the level of the distributions.\footnote{In \cite{MR3513610} we applied a similar transformation ``trick'' to the expansion of the hard-edge limit law of LUE.} As yet another application of that technique we simplify in Sect.~\ref{sect:Choup} the finite-size correction terms of Choup \cite{MR2233711} to the soft-edge limit law of GUE and LUE.

In  Sect.~\ref{sect:BaikJenkins} we expand the Poissonized length distribution, thereby generalizing the result~\eqref{eq:BaikJenkinsOrig} of Baik and Jenkins. In Sect.~\ref{sect:BaikJenkins} we also discuss the potentially detrimental effect of using Gauss brackets, as in~\eqref{eq:BaikJenkinsOrig}, alongside with the continuous variable $t$ in the expansion terms. 

\subsubsection*{Part II: Results Based on the Tameness Hypothesis}

In  Sect.~\ref{sect:main} we state and prove the main result of the paper: the expansion of the Baik--Deift--Johansson limit law \eqref{eq:BDJ2} of the length distribution. Here we use the Jasz expansion of analytic de-Poissonization (as detailed in Appendix~\ref{sect:jasz}). The universal bounds for entire functions of genus zero, which are used to prove the Tauberian growth conditions in the complex plane, and their relation to the theory of $H$-admissibility are prepared for in Appendix~\ref{sect:hayman}. Additionally, in Sect.~\ref{sect:main} we discuss the modifications that apply to the discrete density $\prob(L_n=l)$ (that is, to the PDF of the length distribution).

In Sect.~\ref{sect:stirling} we apply our findings to the asymptotic expansion of the Stirling-type formula which we introduced in \cite{arxiv.2206.09411} as an accurate tool for the numerical approximation of the length distribution. Subject to the tameness hypothesis, we prove the observation \cite[Eq.~(8b)]{arxiv.2206.09411} about a leading $O(n^{-2/3})$ error of that formula.

Finally, in Sect.~\ref{sect:Ulam} we study the asymptotic expansion of the expected value and variance. Based on a reasonable hypothesis about some uniformity in the tail bounds, we add several more concrete expansion terms to the Baik--Deift--Johansson solution \eqref{eq:BDJmean} of Ulam's problem.

\part*{Part I: Unconditional Results}

\section{Expansions of perturbed Airy kernel determinants}\label{sect:airyperturb}

In Sect.~\ref{sect:hard-to-soft} we will get, with $m$ being some non-negative integer and $t_0$ some real number, kernel expansions
of the form
\begin{equation}\label{eq:kernelexpan}
K_{(h)}(x,y) = K_0(x,y) + \sum_{j=1}^m h^j K_j(x,y) + h^{m+1} O\big(e^{-(x+y)}\big),
\end{equation}
which are 
\begin{itemize}\itemsep=3pt
\item uniformly valid for $t_0 \leq x,y < c h^{-1}$ as $h\to 0^+$, where $c>0$ is some constant;
\item repeatedly differentiable w.r.t. $x$, $y$ as uniform expansions under the same conditions.
\end{itemize}

\smallskip\noindent
Here, $K_h$ is a family of smooth kernels, $K_0$ denotes the Airy kernel \eqref{eq:airykern} and the $K_j(x,y)$ are finite sums of rank one kernels $u(x)v(y)$, with factors $u(\xi)$, $v(\xi)$ of the functional form 
\begin{equation}\label{eq:airyform}
p(\xi) \Ai(\xi)\quad \text{or}\quad p(\xi) \Ai'(\xi),
\end{equation}
where $p(\xi)$ is a polynomial in $\xi$. Since the existing literature tends to neglect the issue of estimating trace norms in terms of kernel bounds, this section aims at  establishing a relatively easy framework for lifting such an expansion to one of the Fredholm determinant.

\subsection{Bounds on the kernels and induced trace norms}\label{sect:bounds}

Bounds on the kernels $K_j$, and on the trace norms of the induced integral operators, can be deduced from 
the estimates,\footnote{This bound, chosen for convenience but not for optimality, follows from the superexponential decay of the Airy function and its derivative as $\xi\to+\infty$ and the bounds $O\big((-\xi)^{-1/4}\big)$ and $O\big((-\xi)^{1/4}\big)$ as $\xi\to-\infty$, cf. the expansions \eqref{eq:airyexpan} and \cite[Eq.~(9.7.9/10)]{MR2723248}.} with $p$ being any polynomial,
\[
|p(\xi)| \cdot \max\big(|\Ai(\xi)|,|\Ai'(\xi)|\big) \leq a_p e^{-\xi}\quad (\xi\in \R),
\]
where the constant $a_p$ does only depend on $p$ (note that we can take $a_p=1$ when $p(\xi)\equiv1$). This way we get from \eqref{eq:airykern}
\[
|K_0(x,y)| \leq \int_0^\infty |\Ai(x+\sigma) \Ai(y+\sigma)|\,d\sigma \leq e^{-(x+y)} \int_0^\infty e^{-2\sigma}\,d\sigma = \tfrac{1}{2}e^{-(x+y)} \quad (x,y \in \R)
\]
and, for $1\leq j\leq m$, constants $c_j$ such that 
\[
|K_j(x,y)| \leq c_j e^{-(x+y)} \quad (x,y \in \R).
\]

For a given continuous kernel $K(x,y)$ we denote the induced integral operator on $L^2(t,c h^{-1})$ by $\bar {\mathbf K}$ and 
the one on $L^2(t,\infty)$, if defined, by ${\mathbf K}$ (suppressing the dependence on $t$ in both cases). The spaces of trace class and Hilbert--Schmidt operators acting on $L^2(t,s)$ are written as ${\mathcal J}^p(t,s)$ with $p=1$ and $p=2$. By using the orthogonal projection of $L^2(t,\infty)$ onto the subspace $L^2(t,ch^{-1})$ we see that 
\begin{equation}\label{eq:traceclassinclusion}
\|\bar {\mathbf K}\|_{{\mathcal J}^1(t,ch^{-1})} \leq \|{\mathbf K}\|_{{\mathcal J}^1(t,\infty)}.
\end{equation}
The Airy operator ${\mathbf K}_0$ (being by \eqref{eq:airykern} the square of the Hilbert--Schmidt operator ${\mathbf A}_t$ with kernel $\Ai(x+y-t)$)  and the expansion operators ${\mathbf K}_j$ ($j\geq 1$) (being finite rank operators) are trace class on the space $L^2(t,\infty)$. Their trace norms are bounded by
\begin{multline*}
\|{\mathbf K}_0\|_{{\mathcal J}^1(t,\infty)} \leq \|{\mathbf A}_t\|_{{\mathcal J}^2(t,\infty)}^2 = \int_t^\infty \int_t^\infty |\Ai(x+y-t)|^2 \,dx\,dy \\*[2mm]
=  \int_0^\infty \int_0^\infty |\Ai(x+y+t)|^2 \,dx\,dy 
\leq e^{-2t} \int_0^\infty\int_0^\infty e^{-2(x+y)}\,dx\,dy = \tfrac{1}{4} e^{-2t} \quad (t\in\R),
\end{multline*}
and, likewise with some constants $c_j^*$, by
\[
\|{\mathbf K}_j\|_{{\mathcal J}^1(t,\infty)} \leq c_j^* e^{-2t}\qquad (1\leq j\leq m, \; t\in \R).
\] 
Here we have used
\[
\|u \otimes v\|_{{\mathcal J}^1(t,\infty)} \leq \|u\|_{L^2(t,\infty)}\|v\|_{L^2(t,\infty)} \leq \frac{a_p a_q}{2} e^{-2t} \quad (t\in \R)
\]
for factors $u(\xi)$, $v(\xi)$ of the form \eqref{eq:airyform} with some polynomials $p$ and $q$.

On the other hand, there is in general no direct relation between kernel bounds and bounds of the trace norm of induced integral operators 
(see \cite[p.~25]{MR2154153}). Writing the kernel of the error term  in \eqref{eq:kernelexpan}, and its bound, in the form \begin{equation}\label{eq:kernelerrorterm}
h^{m+1} R_{m+1,h}(x,y) = h^{m+1} O\big(e^{-(x+y)}\big)
\end{equation}
does therefore not offer, as it stands, any direct method of lifting the bound to trace norm.\footnote{This subtle technical point has frequently been missed in the literature when lifting kernel expansion to trace class operators. (E.g., the argument given in \cite[p.~12]{MR2233711} for lifting a kernel expansion to
the Edgeworth expansion of the largest eigenvalue distribution of GUE and LUE lacks in that respect. It can be made rigorous when supplemented by the estimates given here; see Thm.~\ref{thm:detexpan}. Another rigorous approach can be found in the work of Johnstone \cite{MR2485010,MR3025686}.)}

By taking, however, the explicitly assumed differentiability of the kernel expansion into account, there is a constant $c_\sharp$ such that
\[
S_{m+1,h}(x,y):=\partial_y R_{m+1,h}(x,y),\quad |S_{m+1,h}(x,y)| \leq c_\sharp e^{-(x+y)}
\]
holds true for all $t_0 \leq x,y < c h^{-1}$ and $0<h\leq h_0$ ($h_0$ chosen sufficiently small). If we denote an indicator function by $\chi$ and choose some $c_* < c$ close to $c$, integration gives
\[
R_{m+1,h}(x,y) = R_{m+1}(x,c_*h^{-1}) -\int_t^{c_* h^{-1}} S_{m+1,h}(x,\sigma)e^{\sigma/2} \cdot e^{-\sigma/2} \chi_{[t,\sigma]}(y)\,d\sigma
\]
if $t_0 \leq x,y \leq c_* h^{-1}$. This
shows that the thus induced integral operator on $L^2(t,c_* h^{-1})$, briefly written as
\[
\bar{\mathbf R}_{m+1,h}^{\flat} + \bar{\mathbf R}_{m+1,h}^{\sharp},
\]
is trace class since it is the sum of a rank-one operator and a product of two Hilbert--Schmidt operators. In fact, for $t_0 \leq t \leq c_* h^{-1}$, the trace norms of those terms are bounded by (denoting the implied constant in \eqref{eq:kernelerrorterm} by $c_\flat$)
\begin{align*}
\| \bar{\mathbf R}_{m+1,h}^{\flat}\|_{{\mathcal J}^1(t,c_* h^{-1})}^2 &\leq \|1\|_{L^2(t,c_*h^{-1})}^2 \cdot \|R_{m+1,h}(\,\cdot\,,c_*h^{-1})\|_{L^2(t,c_*h^{-1})}^2  \\*[1mm] 
&\leq (ch^{-1} - t_0) \cdot c_{\flat}^2 e^{-2c_*h^{-1}} \int_t^\infty e^{-2x}\,dx  = h^{-1}e^{-2c_*h^{-1}} O(e^{-2t}) 
\end{align*}
and
\begin{align*}
\| \bar{\mathbf R}_{m+1,h}^{\sharp}\|_{{\mathcal J}^1(t,c_* h^{-1})}^2 &\leq \left(\int_t^{c_*h^{-1}}\int_t^{c_*h^{-1}} S_{m+1,h}(x,y)^2e^{y}\,dx\,dy\right)\cdot
\left(\int_t^{c_*h^{-1}}\int_t^x e^{-x}\,dx\,dy\right)\\*[1mm]
&\leq c_\sharp^2 \left(\int_t^\infty\int_t^\infty e^{-2x-y}\,dx\,dy\right)\cdot
\left(\int_t^\infty\int_t^x e^{-x}\,dx\,dy\right) = O(e^{-4t}).
\end{align*}
Here, the implied constants are independent of the particular choice of $c_*$. By noting that the 
orthogonal projection of $L^2(t,ch^{-1})$ onto $L^2(t,c_*h^{-1})$ converges, as $c_* \to c$, to the identity in the strong operator topology, we obtain by a continuity theorem\footnote{We use just a simple special case: if $H$ is a separable Hilbert space, $A\in {\mathcal J}^1(H)$  and $P_n: H \to H$ a sequence of orthogonal projections converging to the identity in the strong operator topology, then $\|P_n A P_n - A\|_{{\mathcal J}^1(H)} \to 0$. See also also \cite[p.~28, Example~3]{MR2154153}.} of Grümm \cite[Thm.~1]{MR0327208}
\begin{align*}
\| \bar{\mathbf R}_{m+1,h}\|_{{\mathcal J}^1(t,c h^{-1})} & = \lim_{c_*\to c} \| \bar{\mathbf R}_{m+1,h}\|_{{\mathcal J}^1(t,c_* h^{-1})} \\*[2mm]
&\leq \limsup_{c_*\to c}\| \bar{\mathbf R}_{m+1,h}^{\flat}\|_{{\mathcal J}^1(t,c_* h^{-1})} + \limsup_{c_*\to c}\| \bar{\mathbf R}_{m+1,h}^{\sharp}\|_{{\mathcal J}^1(t,c_* h^{-1})}\\*[2mm] &= h^{-1/2} e^{-c h^{-1}} O(e^{-t}) + O(e^{-2t}).
\end{align*}
We have thus lifted the kernel expansion \eqref{eq:kernelexpan} to an operator expansion in ${\mathcal J}^1(t,ch^{-1})$, 
namely
\begin{subequations}\label{eq:operatorexpan}
\begin{equation}
\bar{\mathbf K}_{(h)} = \bar{\mathbf K}_0 + \sum_{j=1}^m h^j \bar{\mathbf K}_j + h^{m+1} \bar{\mathbf R}_{m+1,h},
\end{equation}
with the bounds (recall \eqref{eq:traceclassinclusion} and observe that we can absorb $h^{m+1/2} e^{-c h^{-1}}$ into $O(e^{-c h^{-1}})$)
\begin{align}
\|{\mathbf K}_j \|_{{\mathcal J}^1(t,\infty)} &= O(e^{-2t}),\\*[3mm]
h^{m+1}\|\bar{\mathbf R}_{m+1,h}\|_{{\mathcal J}^1(t,ch^{-1})} &= h^{m+1} O(e^{-2t}) + e ^{-c h^{-1}} O(e^{-t}),\label{eq:operatorexpanR}
\end{align}
\end{subequations}
uniformly valid for $t_0 \leq t < c h^{-1}$ as $h\to 0^+$.

\subsection{Fredholm determinants}
Given a continuous kernel $K(x,y)$ on the bounded rectangle $t_0 \leq x,y \leq t_1$, 
the Fredholm determinant (cf. \cite[§3.4]{MR2760897})
\begin{equation}\label{eq:FredholmDet}
\det(I - K)|_{L^2(t,s)} := \sum_{m=0}^\infty \frac{(-1)^m}{m!} \int_t^s \cdots\int_t^s \det_{j,k=1}^m K(x_j,x_k) \,dx_1\cdots\, dx_m
\end{equation}
is well-defined for $t_0\leq t < s \leq t_1$. If, as is the case for the kernels $K_j$ (see Sect.~\ref{sect:bounds}), the kernel is also continuous for all $x,y\geq t_0$ and there is a 
weighted uniform bound of the form 
\[
\sup_{x,y \geq t_0} e^{x+y} \,|K(x,y)| \leq M < \infty,
\]
the Fredholm determinant \eqref{eq:FredholmDet} is well-defined even if we choose $s=\infty$ (this can be seen by writing the integrals in terms of the weighted measure $e^{-x}\,dx$; cf. \cite[§3.4]{MR2760897}). 

Now, if the induced integral operator ${\mathbf K}|_{L^2(t,s)}$ on $L^2(t,s)$ is trace class, the Fredholm determinant can be expressed in terms of the operator determinant  (cf. \cite[Chap.~3]{MR2154153}):
\begin{equation}\label{eq:detrel}
\det(I - K)|_{L^2(t,s)} = \det\big({\mathbf I}- {\mathbf K}|_{L^2(t,s)}\big).
\end{equation}
Using the orthogonal decomposition 
\begin{equation}\label{eq:orthogonal}
L^2(t,\infty) = L^2(t,ch^{-1}) \oplus L^2(ch^{-1},\infty)
\end{equation}
and writing the operator expansion \eqref{eq:operatorexpan} in the form
\[
\bar{\mathbf K}_{(h)} = \bar{\mathbf K}_{m,h} + h^{m+1} \bar{\mathbf R}_{m+1,h}, \qquad \bar{\mathbf K}_{m,h}:=\bar{\mathbf K}_0 + \sum_{j=1}^m h^j \bar{\mathbf K}_j, 
\]
we get from the error estimate in \eqref{eq:operatorexpanR}, by the local Lipschitz continuity of operator determinants w.r.t. trace norm (cf. \cite[Thm.~3.4]{MR2154153}), that
\begin{align*}
\det(I - K_h)|_{L^2(t,ch^{-1})} &= \det({\mathbf I}- \bar{\mathbf K}_{(h)}) 
 = \det\big({\mathbf I}- \bar{\mathbf K}_{m,h}\big) + h^{m+1} O(e^{-2t}) +  e ^{-c h^{-1}} O(e^{-t}) \\*[1mm]
&= \det\big({\mathbf I}- {\mathbf K}_{m,h}\big) + h^{m+1} O(e^{-2t}) +  e ^{-c h^{-1}} O(e^{-t}),
\end{align*}
uniformly for $t_0 \leq t < ch^{-1}$ as $h\to 0^+$.
Here, the last estimate comes from a block decomposition of $\bar{\mathbf K}_{(h)}$ according to \eqref{eq:orthogonal} and applying the trace norm bounds of ${\mathbf K}_j$ in Sect.~\ref{sect:bounds} to the boundary case $t=ch^{-1}$, which yields 
\[
\det\big({\mathbf I}- \bar{\mathbf K}_{m,h}\big) = \det\big({\mathbf I}- {\mathbf K}_{m,h}\big) + O(e^{-2ch^{-1}});
\]
note that the error term of order $O(e^{-2ch^{-1}})$ can be absorbed into $e ^{-c h^{-1}} O(e^{-t})$ if $t < c h^{-1}$.

\subsection{Expansions of operator determinants}\label{sect:Plemelj}
Plemelj's formula gives, for trace class perturbations $\mathbf E$ of the identity on $L^2(t,\infty)$ bounded by $\|{\mathbf E}\|_{{\mathcal J}^1(t,\infty)} < 1$, the convergent series expansion (cf. \cite[Eq.~(5.12)]{MR2154153})
\begin{gather}
\det({\mathbf I} - {\mathbf E}) = \exp\left(-\sum_{n=1}^\infty n^{-1} \tr({\mathbf E}^n)\right) \label{eq:Plemelj}\\*[1mm]
= 1- \tr {\mathbf E}
 + \frac{1}{2}\left((\tr {\mathbf E})^2-\tr({\mathbf E}^2)\right) 
+ \frac{1}{6}\left(-(\tr {\mathbf E})^3 + 3 \tr{\mathbf E}\tr {\mathbf E}^2 - 2 \tr {\mathbf E}^3 \right)+ O\Big(\|{\mathbf E}\|_{{\mathcal J}^1(t,\infty)}^4\Big).\notag
\end{gather}
Thus, since ${\mathbf I}-{\mathbf K_0}$ is invertible with a uniformly bounded inverse as $t\geq t_0$,\footnote{\label{foot:inversebound}The Airy kernel $K_0$ induces a symmetric positive definite integral operator ${\mathbf K_0}$ on $L^2(t,\infty)$. Its norm as a bounded operator is thus is given by the spectral radius, which stays below $1$ uniformly as $t\geq t_0$, cf. \cite{MR1257246}:
\[
\|{\mathbf K_0} \| =\rho({\mathbf K_0}) \leq c(t_0)< 1.
\]
By functional calculus we thus get the uniform bound $\|({\mathbf I}-{\mathbf K_0})^{-1}\| = \frac{1}{1-\rho({\mathbf K_0}) } \leq \frac{1}{1-c(t_0)}$.} we have
\[
\det\big({\mathbf I}-{\mathbf K}_{m,h}\big) = \det({\mathbf I}-{\mathbf K_0}) \det\big({\mathbf I}-{\mathbf E}_{m,h} \big), \qquad
{\mathbf E}_{m,h} := \sum_{j=1}^m h^j ({\mathbf I}-{\mathbf K_0})^{-1}{\mathbf K_j},
\]
with the trace norm of ${\mathbf E}_{m,h} $ being bounded as follows (using the results of Sect.~\ref{sect:bounds} and observing that the trace class forms an ideal within the algebra of bounded operators):
\[
\|{\mathbf E}_{m,h} \|_{{\mathcal J}^1(t,\infty)} \leq  \|({\mathbf I}-{\mathbf K_0})^{-1}\|\, \frac{ h}{1-h}\cdot O\big(e^{-2t}\big) = h\cdot O(e^{-2t}), 
\]
uniformly for $t\geq t_0$ as $h\to0^+$. By \eqref{eq:Plemelj} this implies, uniformly under the same conditions,
\[
\det\big({\mathbf I}-{\mathbf E}_{m,h} \big) = 1 + \sum_{j=1}^m{d_j(t) h^j} + h^{m+1}\cdot O(e^{-2t}).
\]
Here,  $d_j(t)$ depends smoothly on $t$ and satisfies the (right) tail bound $d_j(t) = O(e^{-2t})$. If
 we write briefly
\[
{\mathbf E}_j = ({\mathbf I}-{\mathbf K_0})^{-1}{\mathbf K_j}
\]
the first few cases are given by the expressions (because the traces are taken for trace class operators acting on $L^2(t,\infty)$ they depend on $t$)
\begin{subequations}
\begin{align}
d_1(t) &= - \tr {\mathbf E}_1, \qquad d_2(t) = \frac{1}{2}(\tr{\mathbf E_1})^2 - \frac{1}{2}\tr{\mathbf E}_1^2 - \tr{\mathbf E_2},\\*[1mm]
d_3(t) &= -\frac{1}{6}(\tr{\mathbf E_1})^3 +\frac{1}{2}\tr {\mathbf E}_1\tr {\mathbf E}_1^2 -\frac{1}{2}\tr({\mathbf E}_1{\mathbf E}_2 + {\mathbf E}_2 {\mathbf E}_1) \label{eq:d3}\\*[1mm]
&\qquad - \frac{1}{3}\tr {\mathbf E}_1^3 + \tr{\mathbf E}_1 \tr{\mathbf E}_2 - \tr{\mathbf E}_3.\notag
\end{align}
\end{subequations}
Since the ${\mathbf K_j}$ are trace class, those trace expressions can be recast in terms of resolvent kernels and integral traces (cf. \cite[Thm.~(3.9)]{MR2154153}).

Taking the bound $0\leq F(t)\leq 1$ of the Tracy--Widom distribution (being a probability distribution) into account, the results of Sect.~\ref{sect:airyperturb} can be summarized in form of the following:

\begin{theorem}\label{thm:detexpan} Let $K_{(h)}(x,y)$ be a continuous kernel, $K_0$ the Airy kernel \eqref{eq:airykern} and let the $K_j(x,y)$ be finite sums of rank one kernels with factors of the form \eqref{eq:airyform}.
If, for some fixed non-negative integer $m$ and some real number $t_0$, there is a kernel expansions of the form 
\[
K_{(h)}(x,y) = K_0(x,y) + \sum_{j=1}^m h^j K_j(x,y) + h^{m+1} \cdot O\big(e^{-(x+y)}\big),
\]
which, for some constant $c>0$,  holds uniformly in $t_0\leq x,y < ch^{-1}$ as $h\to 0^+$and which can be repeatedly differentiated w.r.t. $x$ and $y$ as uniform expansions, then the Fredholm determinant of $K_{(h)}$ on $(t,ch^{-1})$ satisfies
\begin{equation}\label{eq:detexpan}
\det\big(I-K_{(h)}\big)|_{L^2(t,ch^{-1})} = F(t) +  \sum_{j=1}^m G_j(t) h^j + h^{m+1} O(e^{-2t}) +  e ^{-c h^{-1}} O(e^{-t}),
\end{equation}
uniformly for $t_0\leq t < ch^{-1}$ as $h\to 0^+$. Here, $F$ denotes the Tracy--Widom distribution~\eqref{eq:TW2} and the $G_j(t)$ are smooth functions depending on the kernels $K_1,\ldots,K_j$, satisfying the (right) tail bounds $G_j(t) = O(e^{-2t})$. The first two are
\begin{align*}
G_1(t) &= - F(t) \cdot \left.\tr\big((I-K_0)^{-1}K_1\big)\right|_{L^2(t,\infty)}, \\*[1mm]
G_2(t) &= F(t)\cdot \bigg(\frac{1}{2} \Big(\!\left.\tr\big((I-K_0)^{-1}K_1\big)\right|_{L^2(t,\infty)}\Big)^2\\*[1mm]
 &\qquad- \frac{1}{2}\left.\tr\big(((I-{K_0})^{-1}{K_1})^2\big)\right|_{L^2(t,\infty)}  
 - \left.\tr\big(({I}-{K_0})^{-1}{K_2}\big)\right|_{L^2(t,\infty)}\bigg),
\end{align*}
where $(I-K_0)^{-1}$ is understood as a resolvent kernel and the traces as integrals. The determinantal expansion \eqref{eq:detexpan} can repeatedly be differentiated w.r.t. $t$, preserving uniformity.
\end{theorem}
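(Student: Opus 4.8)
Everything needed has essentially been assembled in Sections~\ref{sect:bounds}--\ref{sect:Plemelj}; the plan is to organize those ingredients into three successive reductions: from the kernel expansion to a trace-class operator expansion on $L^2(t,ch^{-1})$, then to a determinant on the half-line $L^2(t,\infty)$ up to controllable errors, and finally to the Plemelj expansion of the perturbed determinant, which is then multiplied back by $\det(\mathbf I-\mathbf K_0)=F(t)$. The main terms are straightforward: each $K_j$ with $1\le j\le m$ is a finite sum of rank-one kernels $u\otimes v$ with factors of the form~\eqref{eq:airyform}, so $\|u\otimes v\|_{\mathcal J^1(t,\infty)}\le\|u\|_{L^2(t,\infty)}\|v\|_{L^2(t,\infty)}=O(e^{-2t})$; and $K_0$ is the square of the Hilbert--Schmidt Airy operator with kernel $\Ai(x+y-t)$, whence $\|\mathbf K_0\|_{\mathcal J^1(t,\infty)}=O(e^{-2t})$. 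Restricting to $L^2(t,ch^{-1})$ via the orthogonal projection only decreases these norms, cf.~\eqref{eq:traceclassinclusion}, so these pieces already form an operator expansion with $t$-uniform $O(e^{-2t})$ coefficient bounds.

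The one genuine obstacle is lifting the remainder $h^{m+1}R_{m+1,h}(x,y)=h^{m+1}O(e^{-(x+y)})$ to trace norm: no inequality bounds a trace norm by a (weighted) sup norm of the kernel, so the pointwise bound is by itself insufficient. The plan is to use the hypothesis that the expansion is differentiable as a uniform expansion: then $S_{m+1,h}:=\partial_y R_{m+1,h}$ satisfies $|S_{m+1,h}(x,y)|\le c_\sharp e^{-(x+y)}$ uniformly, and integrating in $y$ from the variable up to a right endpoint $c_*h^{-1}$ (with $c_*<c$ chosen close to $c$) represents $R_{m+1,h}$ as the sum of a rank-one boundary term $R_{m+1,h}(\cdot,c_*h^{-1})$ and a kernel $\int S_{m+1,h}(x,\sigma)e^{\sigma/2}\cdot e^{-\sigma/2}\chi_{[t,\sigma]}(y)\,d\sigma$, i.e.\ a product of two Hilbert--Schmidt operators. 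Bounding both pieces and letting $c_*\to c$ (using that the projection of $L^2(t,ch^{-1})$ onto $L^2(t,c_*h^{-1})$ tends to the identity in operator norm) yields $h^{m+1}\|\bar{\mathbf R}_{m+1,h}\|_{\mathcal J^1(t,ch^{-1})}=h^{m+1}O(e^{-2t})+e^{-ch^{-1}}O(e^{-t})$; the extra boundary term is precisely the cost of working on a finite interval and is the origin of the term $e^{-ch^{-1}}O(e^{-t})$ in~\eqref{eq:detexpan}. This produces the operator expansion~\eqref{eq:operatorexpan}.

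It then remains to compute the determinant. Write $\mathbf K_h^{(m)}:=\mathbf K_0+\sum_{j=1}^m h^j\mathbf K_j$. Using~\eqref{eq:detrel}, the local Lipschitz continuity of operator determinants in trace norm to absorb $h^{m+1}\bar{\mathbf R}_{m+1,h}$, and a block decomposition along~\eqref{eq:orthogonal} whose off-diagonal and lower-right contributions are controlled by the trace norms of the $\mathbf K_j$ at $t=ch^{-1}$ (hence $O(e^{-2ch^{-1}})$, absorbed into $e^{-ch^{-1}}O(e^{-t})$), one gets $\det(I-K_h)|_{L^2(t,ch^{-1})}=\det(\mathbf I-\mathbf K_h^{(m)})|_{L^2(t,\infty)}+h^{m+1}O(e^{-2t})+e^{-ch^{-1}}O(e^{-t})$. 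Since $\mathbf K_0$ is symmetric positive definite with spectral radius bounded away from $1$ uniformly in $t\ge t_0$ (footnote~\ref{foot:inversebound}), $\mathbf I-\mathbf K_0$ is uniformly boundedly invertible and one may factor $\det(\mathbf I-\mathbf K_h^{(m)})=F(t)\det(\mathbf I-\mathbf E_h^{(m)})$ with $\mathbf E_h^{(m)}=\sum_{j=1}^m h^j(\mathbf I-\mathbf K_0)^{-1}\mathbf K_j$ and $\|\mathbf E_h^{(m)}\|_{\mathcal J^1(t,\infty)}=h\cdot O(e^{-2t})$ uniformly. For $h$ small (uniformly in $t\ge t_0$) this trace norm is $<1$, so Plemelj's expansion~\eqref{eq:Plemelj} applies; collecting powers of $h$ gives $\det(\mathbf I-\mathbf E_h^{(m)})=1+\sum_{j=1}^m d_j(t)h^j+h^{m+1}O(e^{-2t})$, where each $d_j$ is a fixed polynomial in the traces $\tr(\mathbf E_{i_1}\cdots\mathbf E_{i_k})$, $\mathbf E_i=(\mathbf I-\mathbf K_0)^{-1}\mathbf K_i$. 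Each such trace is smooth in $t$ and $O(e^{-2t})$, and a product of bounded quantities one of which is $O(e^{-2t})$ is again $O(e^{-2t})$, so $G_j(t):=F(t)\,d_j(t)$ is smooth with $G_j(t)=O(e^{-2t})$, using $0\le F(t)\le 1$; reading off $d_1,d_2$ from the leading terms of~\eqref{eq:Plemelj} gives the stated formulas for $G_1,G_2$, and collecting the error terms gives~\eqref{eq:detexpan}.

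For differentiability in $t$ the plan is to differentiate the resolvent-trace representation of the $d_j$: the $t$-dependence resides in the lower integration limit and in the resolvent kernel $(I-K_0)^{-1}$ on $L^2(t,\infty)$, both smooth, and the endpoint terms produced upon differentiation carry factors $\Ai(t+\cdot)$ or $\Ai'(t+\cdot)$ and hence remain $O(e^{-2t})$ and $t$-uniform; applying the same differentiation to the remainder (whose $x,y$-derivatives are controlled by hypothesis) preserves an error of order $h^{m+1}O(e^{-2t})+e^{-ch^{-1}}O(e^{-t})$ after possibly shrinking $c$, the only subtlety being endpoint terms at $ch^{-1}$, which are exponentially small in $h^{-1}$ and absorbed as before. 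I expect the trace-norm lift of the remainder in the second step---the integration-by-parts splitting into a rank-one plus a Hilbert--Schmidt-squared operator---to be the crux of the argument; the remaining steps are essentially bookkeeping with Plemelj's formula and orthogonal projections.
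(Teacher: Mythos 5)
Your proposal follows the paper's own proof essentially step for step: the integration‐by‐parts lift of the remainder $h^{m+1}R_{m+1,h}$ to trace class via its $y$‐derivative (rank‐one boundary term plus a product of two Hilbert--Schmidt operators), the block decomposition along $L^2(t,\infty)=L^2(t,ch^{-1})\oplus L^2(ch^{-1},\infty)$, the factorization $\det(\mathbf I-\mathbf K_h^{(m)})=F(t)\det(\mathbf I-\mathbf E_h^{(m)})$ using the uniform invertibility of $\mathbf I-\mathbf K_0$, and the Plemelj expansion to read off $d_1,d_2$ are all exactly the ingredients of Sections 2.1--2.3 of the paper. Your identification of the trace‐norm lift as the crux and your handling of the two sources of the $e^{-ch^{-1}}O(e^{-t})$ error term are both accurate; the proposal is correct.
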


\begin{figure}[tbp]
\includegraphics[width=0.325\textwidth]{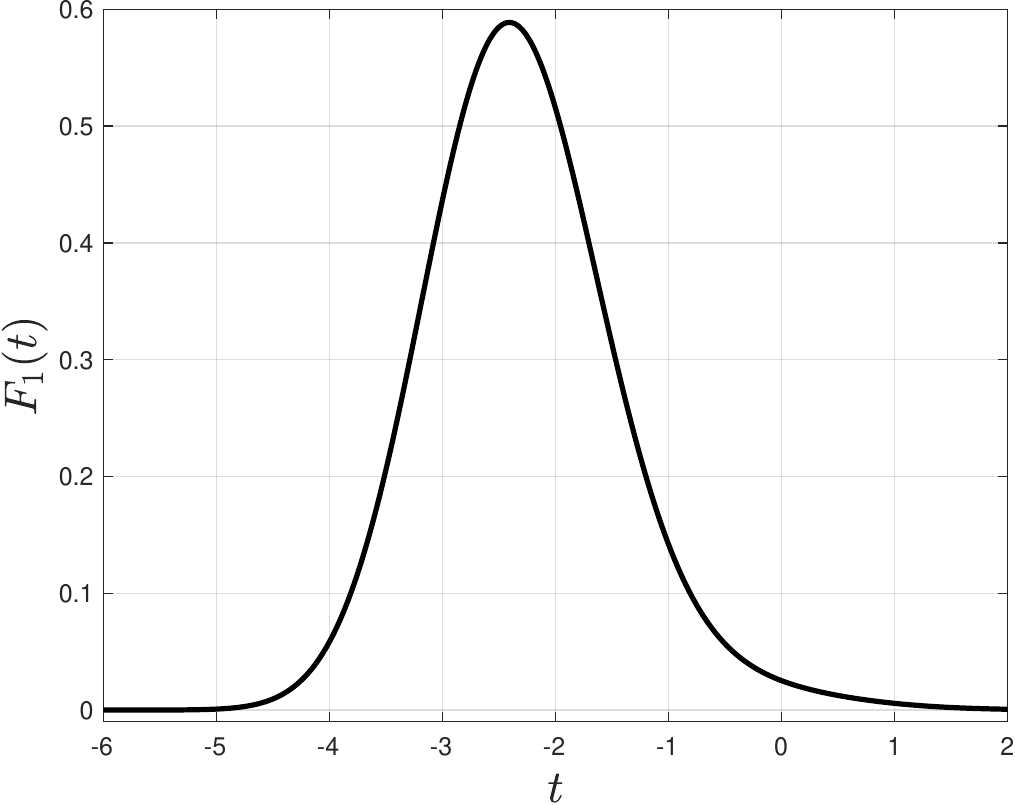}\hfil\,
\includegraphics[width=0.325\textwidth]{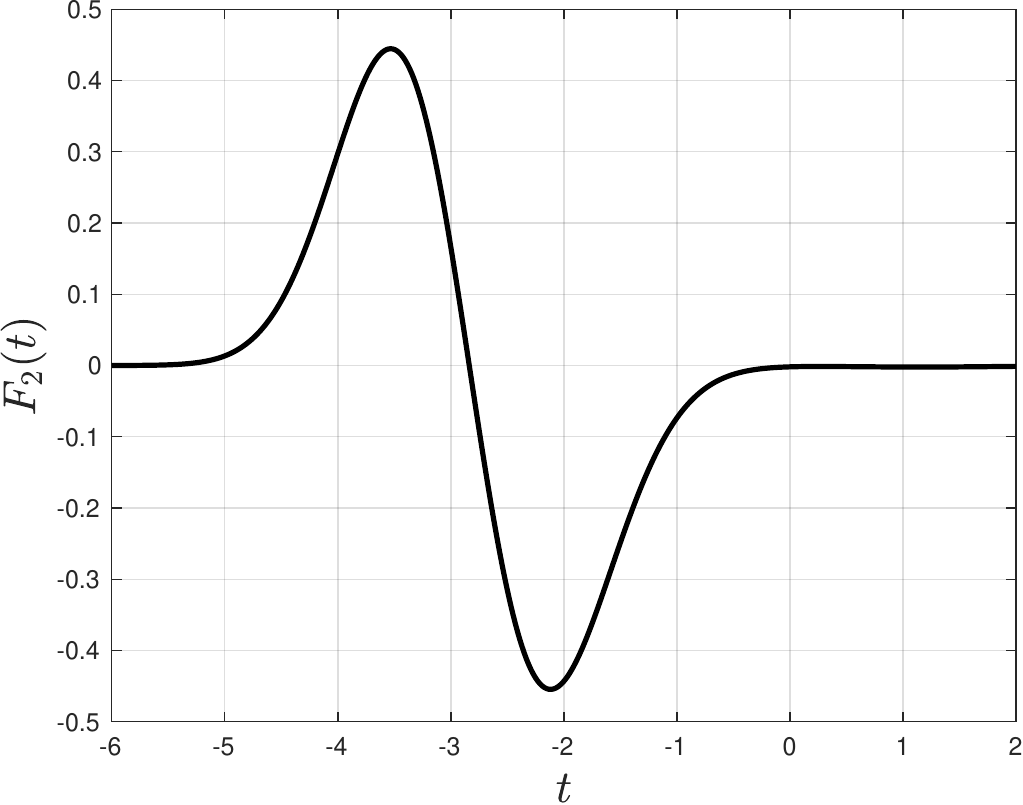}\hfil\,
\includegraphics[width=0.325\textwidth]{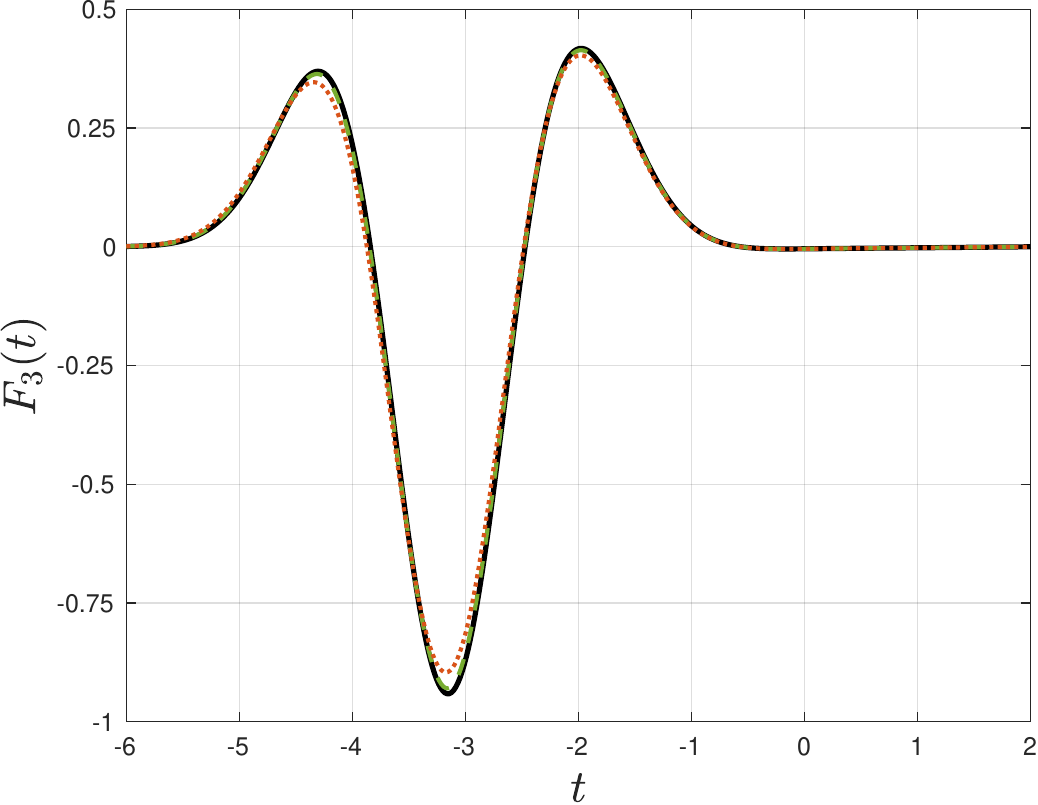}
\caption{{\footnotesize Plots of $F_{1}(t)$ (left panel) and $F_{2}(t)$ (middle panel) as in (\ref{eq:F22}a/b).
The right panel shows $F_{3}(t)$ as in (\ref{eq:F22}c) (black solid line) with the approximations \eqref{eq:F23} for $\nu=100$ (red dotted line) and $\nu=800$ (green dashed line): the close agreement validates the functional forms displayed in \eqref{eq:F22}. Details about the numerical method can be found in \cite{MR2895091, MR2600548,arxiv.2206.09411,MR3647807}.}}
\label{fig:hard2soft}
\end{figure}

\section{Expansion of the Hard-to-Soft Edge Transition}\label{sect:hard-to-soft}

In this section we prove an expansion for the hard-to-soft edge transition limit \eqref{eq:BF2003}. To avoid notational clutter, we use the quantity
\begin{equation}\label{eq:hnu}
h_\nu := 2^{-1/3} \nu^{-2/3}
\end{equation}
and study expansions in powers of $h_\nu$ as $h_\nu \to 0^+$. The transform $s=\phi_\nu(t)$ used in the transition limit can briefly be written as
\begin{equation}\label{eq:phitrans}
\phi_\nu(t) = \nu^2(1-h_\nu t)^2.
\end{equation}

\begin{theorem}\label{thm:hard2soft} 
There holds the hard-to-soft edge transition expansion
\begin{equation}\label{eq:hard2soft}
E_2^{\text{\em hard}}(\phi_\nu(t);\nu) = F(t) + \sum_{j=1}^m F_{j}(t) h_\nu^j + h_\nu^{m+1}\cdot O(e^{-3t/2}),
\end{equation}
which is uniformly valid when $t_0\leq t < h_\nu^{-1}$ as $h_\nu\to0^+$, $m$ being any fixed non-negative integer and $t_0$ any fixed real number. Preserving uniformity, the expansion can be repeatedly differentiated w.r.t. the variable $t$.
Here the $F_{j}$ are certain smooth functions starting with
\begin{subequations}\label{eq:F22}
\begin{align}
F_{1}(t) &= \frac{3t^2}{10} F'(t) - \frac{1}{5} F''(t),\label{eq:F21}\\*[2mm]
F_{2}(t) &=\Big(\frac{2}{175} + \frac{32t^3}{175}\Big) F'(t) + \Big(-\frac{16t}{175} + \frac{9t^4}{200}\Big) F''(t) - \frac{3t^2}{50} F'''(t) + \frac{1}{50} F^{(4)}(t),\\*[2mm]
F_3(t) &= 
\Big(\frac{64 t}{7875} + \frac{1037 t^4}{7875}\Big)  F'(t)
+\Big( -\frac{9t^2}{175}  + \frac{48t^5}{875} \Big) F''(t) \\*[1mm]
&\qquad +\Big(-\frac{122}{7875} -\frac{8 t^3}{125} + \frac{9 t^6}{2000}\Big) F'''(t)
+\Big(\frac{16t}{875} -\frac{9 t^4}{1000} \Big)F^{(4)}(t)\notag\\*[1mm]
&\qquad +\frac{3 t^2 }{500}F^{(5)}(t)
-\frac{1}{750} F^{(6)}(t).\notag
\end{align}
\end{subequations}
\end{theorem}
It is rewarding to validate intriguing formulae such as (\ref{eq:F22}a--c) by numerical methods: Fig.~\ref{fig:hard2soft} plots the functions $F_{1}(t)$, $F_{2}(t)$, $F_3(t)$ next to the approximation
\begin{equation}\label{eq:F23}
F_{3}(t) \approx h_\nu^{-3} \cdot \big( E_2^{\text{hard}}(\phi_\nu(t);\nu) - F(t) - F_{1}(t)h_\nu - F_{2}(t) h_\nu^2\big) 
\end{equation}
for $\nu=100$ and $\nu=800$: the close matching with $F_3(t)$ as displayed by the latter is a very strong testament of the correctness of (\ref{eq:F22}a--c) (in fact, some slips in preliminary calculations have been caught looking at plots which exhibited mismatches).

The proof of Thm.~\ref{thm:hard2soft} is split into several steps and will be concluded in Sects.~\ref{sect:generalform} and \ref{sect:functionalform}.

\subsection{Kernel expansions}

We start with an auxiliary result.

\begin{lemma}\label{lem:Phi}
Define for $h>0$ and $x,y<h^{-1}$ the function
\begin{equation}\label{eq:Phi}
\Phi(x,y;h):=\frac{\sqrt{(1- hx)(1-hy)}} {1-h(x+y)/2}.
\end{equation}
This function $\Phi$ satisfies the bound
\begin{equation}\label{eq:PhiBound}
0 < \Phi(x,y;h) \leq 1
\end{equation}
and has the convergent power series expansion
\begin{subequations}\label{eq:PhiExpan}
\begin{equation}
\Phi(x,y;h) = 1 - (x-y)^2 \sum_{k=2}^\infty r_k(x,y) h^k
\end{equation}
where the $r_k(x,y)$ are certain homogeneous symmetric rational\/\footnote{Throughout the paper the term ``rational polynomial'' is used for polynomials with rational coefficients.} polynomials of degree $k-2$, the first few of them being
\begin{equation}
r_2(x,y) = \frac{1}{8},\quad r_3(x,y) = \frac{1}{8}(x+y), \quad r_4(x,y) = \frac{1}{128}\big(13(x^2+y^2)+22xy\big).
\end{equation}
\end{subequations}
The series converges uniformly for $x,y<(1-\delta)h^{-1}$, $\delta$ being any fixed real positive number.
\end{lemma}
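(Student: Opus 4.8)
The plan is to collapse the whole statement onto a single algebraic identity. A direct computation gives
\[
\bigl(1 - \tfrac{1}{2}h(x+y)\bigr)^2 - (1-hx)(1-hy) = \tfrac{1}{4}h^2(x-y)^2 ,
\]
so that, putting $w = w(x,y;h) := \dfrac{h^2(x-y)^2/4}{\bigl(1-h(x+y)/2\bigr)^2}$, one obtains the factorisation $\Phi(x,y;h)^2 = 1 - w$. The bound \eqref{eq:PhiBound} is then immediate: for $x,y<h^{-1}$ the factors $1-hx$ and $1-hy$ are positive, hence so is $1-h(x+y)/2$ (since $hx,hy<1$ forces $h(x+y)/2<1$), so numerator and denominator in \eqref{eq:Phi} are positive and $\Phi>0$; moreover the displayed identity shows $\bigl(1-h(x+y)/2\bigr)^2 \ge (1-hx)(1-hy) > 0$, i.e.\ $0\le w<1$, whence $\Phi = +\sqrt{1-w}\le 1$, with equality precisely when $x=y$ (where $w=0$).

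For the expansion \eqref{eq:PhiExpan} I would expand $\sqrt{1-w}$ by the binomial series and re-expand $w$ in powers of $h$. Writing $\sqrt{1-w}=1-\sum_{j\ge1}c_jw^j$ with $c_j:=(-1)^{j+1}\binom{1/2}{j}>0$ (so $c_1=\tfrac12$, $c_2=\tfrac18$, $c_3=\tfrac{1}{16}$), and
\[
w = \frac{(x-y)^2}{4}\,h^2\bigl(1-\tfrac12 h(x+y)\bigr)^{-2} = \frac{(x-y)^2}{4}\sum_{m\ge2}(m-1)\Bigl(\frac{x+y}{2}\Bigr)^{m-2}h^m ,
\]
one substitutes and collects powers of $h$. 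The bookkeeping is routine: $w^j$ contributes to $h^k$ only for $k\ge 2j$, and its $h^k$-coefficient is a rational polynomial in $x,y$ that is homogeneous of degree $k$ and divisible by $(x-y)^{2j}$; hence for each $k\ge2$ the $h^k$-coefficient of $\sum_{j\ge1}c_jw^j$ is a symmetric rational polynomial, homogeneous of degree $k$ and divisible by $(x-y)^2$, so it equals $(x-y)^2r_k(x,y)$ with $r_k$ symmetric, homogeneous of degree $k-2$, rational; and the series starts at $k=2$ because $w=O(h^2)$. This is exactly \eqref{eq:PhiExpan}. The explicit values $r_2=\tfrac18$, $r_3=\tfrac18(x+y)$, $r_4=\tfrac1{128}\bigl(13(x^2+y^2)+22xy\bigr)$ are then read off from $\tfrac12 w+\tfrac18 w^2$: only $w$ enters at orders $h^2$ and $h^3$, while both $w$ and $w^2$ enter at order $h^4$.

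For the convergence claim I would argue through analyticity in the complex variable $h$. For fixed real $x\ne y$ the function $h\mapsto\Phi(x,y;h)$ continues analytically to the disk $|h|<\rho(x,y):=\min\bigl(|x|^{-1},|y|^{-1},2|x+y|^{-1}\bigr)$ (with the obvious conventions when one of $x,y,x+y$ vanishes): its only singularities are the square-root branch points $h=x^{-1}$ and $h=y^{-1}$ coming from the zeros of $(1-hx)(1-hy)$ under the radical, together with a simple pole at $h=2/(x+y)$, which is genuine because the quantity under the radical does not vanish there. Hence the Taylor series of $\Phi$ about $h=0$ — which by the previous paragraph is precisely $1-(x-y)^2\sum_k r_k(x,y)h^k$ — converges for $|h|<\rho(x,y)$; and once $x,y$ are confined to $x,y<(1-\delta)h^{-1}$ (and, as in every application of the lemma in the paper, kept bounded below by a fixed real $t_0$), one has $\rho(x,y)\ge (1-\delta)^{-1}h>h$ with a uniform margin for all small $h$, which yields the asserted uniform convergence.

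I expect no deep obstacle here: the substance is the one-line identity together with the binomial expansion, and the only points needing care are the combinatorial bookkeeping that simultaneously tracks symmetry, rationality, homogeneity and divisibility by $(x-y)^2$ through the double sum over $j$ and $m$, and making the convergence region precise — the bare condition $x,y<(1-\delta)h^{-1}$ is the right region only once $x,y$ are also bounded from below, which holds wherever the lemma is invoked.
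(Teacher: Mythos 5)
Your proof is correct and, in places, rather more explicit than the paper's. For the bound $0<\Phi\le 1$ you are in substance reproving the AM--GM inequality, which is what the paper invokes directly; the algebraic identity $\bigl(1-\tfrac12 h(x+y)\bigr)^2-(1-hx)(1-hy)=\tfrac14 h^2(x-y)^2$ is precisely its proof, so no real difference there. The route to the expansion, however, is genuinely different: the paper establishes the \emph{form} of the series indirectly, by combining analyticity in $h$, the one-line limit $\lim_{y\to x}(x-y)^{-2}\bigl(\Phi(x,y;h)-1\bigr)=-\tfrac18\bigl(h/(1-hx)\bigr)^2$ (which exposes the $(x-y)^2$ factor and the $h^2$ onset), and the scaling law $\Phi(\lambda^{-1}x,\lambda^{-1}y;\lambda h)=\Phi(x,y;h)$ (which forces $r_k$ to be homogeneous of degree $k-2$); whereas you \emph{construct} the series directly from $\Phi=\sqrt{1-w}$ via the binomial expansion of $\sqrt{1-w}$ and of $(1-\tfrac12 h(x+y))^{-2}$. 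Your approach is more computational but is self-contained, makes the divisibility by $(x-y)^2$ and the homogeneity immediate, and yields the coefficients $r_2,r_3,r_4$ by inspection. Your singularity-based convergence argument is also sound and, commendably, surfaces a subtlety that the paper's terse proof (and even its statement) passes over in silence: the radius of convergence in $h$ is $\min(|x|^{-1},|y|^{-1},2|x+y|^{-1})$, so the one-sided condition $x,y<(1-\delta)h^{-1}$ alone does \emph{not} suffice for convergence when $x$ or $y$ is very negative (e.g.\ $y=0$, $x=-2/h$ gives $hx=-2$ outside the disc of convergence of $\sqrt{1-u}/(1-u/2)$ in $u=hx$); a lower bound $x,y\ge t_0$ is genuinely needed, and indeed it is present in every invocation of the lemma (cf.\ the reduction to $t_0\le x,y\le\tfrac34 h_\nu^{-1}$ in the proof of Lemma~\ref{lem:besselkernexpan}). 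So your caveat is the right reading of the lemma, not a defect of your proof.
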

\begin{proof}
The bound \eqref{eq:PhiBound} is the inequality of arithmetic and geometric means for the two positive real quantities $1-hx$ and $1-hy$. 
By using
\[
\lim_{y\to x} \frac{1}{(x-y)^2}\Big(\Phi(x,y;h) -1\Big) = -\frac{1}{8}\left(\frac{h}{1-hx}\right)^2,
\]
the analyticity of $\Phi(x,y;h)$ w.r.t. $h$, and the scaling law
\[
\Phi(\lambda^{-1}x,\lambda^{-1}y;\lambda h) = \Phi(x,y;h) \quad (\lambda>0)
\]
we deduce the claims about the form and uniformity of the power series expansion \eqref{eq:PhiExpan}.
\end{proof}

Because of the representation \eqref{eq:E2} of $E_2^\text{hard}(s;\nu)$ in terms of a Fredholm determinant of the Bessel kernel \eqref{eq:besselkern}, we have to expand the induced transformation of that kernel. 

\begin{lemma}\label{lem:besselkernexpan} The change of variables $s=\phi_\nu(t)$, mapping $t< h_\nu^{-1}$ monotonically decreasing to $s> 0$, induces the symmetrically transformed Bessel kernel  
\begin{subequations}\label{eq:besselkernexpan}
\begin{align}
\hat K_\nu^{\text{\em Bessel}}(x,y) &:=\sqrt{\phi_\nu'(x)\phi_\nu'(y)} \, K_\nu^{\text{\em Bessel}}(\phi_\nu(x),\phi_\nu(y)). \\
\intertext{There holds the kernel expansion}
\hat K_\nu^{\text{\em Bessel}}(x,y) &= K_0(x,y) + \sum_{j=1}^m K_j(x,y) h_\nu^j + h_\nu^{m+1}\cdot O\big(e^{-(x+y)}\big),
\end{align}
\end{subequations}
which is uniformly valid when $t_0 \leq x,y < h_\nu^{-1}$ as $h_\nu\to 0^+$, $m$ being any fixed non-negative integer and $t_0$ any fixed real number. Here the $K_j$ are certain finite rank kernels of the form
\[
K_j(x,y) = \sum_{\kappa,\lambda\in\{0,1\}}  p_{j,\kappa\lambda}(x,y) \Ai^{(\kappa)}(x)\Ai^{(\lambda)}(y)
\]
where $p_{j,\kappa\lambda}(x,y)$ are rational polynomials{\em ;} the first two kernels are
\begin{subequations}\label{eq:K2}
\begin{align}
K_1(x,y) &= \frac{1}{10} \Big( -3\big(x^2+xy+y^2\big)\Ai(x)\Ai(y) \\*[1mm]
&\qquad  + 2 \big(\!\Ai(x)\Ai'(y) + \Ai'(x)\Ai(y)\big)+ 3(x+y)\Ai'(x)\Ai'(y) \Big),\label{eq:K1}\notag\\
K_2(x,y) &= \frac{1}{1400}\Big(\big(-235 \left(x^3+y^3\right)-319 x y
   (x+y)+56\big) \Ai(x)\Ai(y) \\*[1mm]
&\qquad   +\big(63(x^4+x^3 y-x^2 y^2-x y^3-y^4)-55 x+239 y\big) \Ai(x)\Ai'(y) \notag\\*[1mm]
&\qquad\quad   +\big(63(-x^4-x^3 y-x^2 y^2+x y^3+y^4)+239 x-55 y\big) \Ai'(x)\Ai(y) \notag\\*[1mm]
 &\qquad\qquad     +\big(340(x^2+y^2)+256 x y\big) \Ai'(x)\Ai'(y)\Big). \notag
\end{align}
\end{subequations}
Preserving uniformity, the kernel expansion \eqref{eq:besselkernexpan} can repeatedly be differentiated w.r.t. $x$, $y$.
\end{lemma}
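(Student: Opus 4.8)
The plan is to turn the transformed Bessel kernel into an explicit expression built from a single rescaled Bessel function, expand that function by the uniform turning‑point (Olver) expansion of Appendix~\ref{sect:bessel}, and then read off the claimed finite‑rank form; the divisibility of certain numerator polynomials by $x-y$ is the one genuinely non‑routine step.

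\emph{Step 1: algebraic reduction.} First I would rewrite $K_\nu^{\text{Bessel}}(\phi_\nu(x),\phi_\nu(y))$ by the first formula in \eqref{eq:besselkern}, using $\sqrt{\phi_\nu(\xi)}=\nu(1-h_\nu\xi)$, $\phi_\nu'(\xi)=-2\nu^2 h_\nu(1-h_\nu\xi)$, and the elementary identities $\phi_\nu(x)-\phi_\nu(y)=\nu^2 h_\nu(y-x)\big(2-h_\nu(x+y)\big)$, $\sqrt{\phi_\nu'(x)\phi_\nu'(y)}=2\nu^2 h_\nu\sqrt{(1-h_\nu x)(1-h_\nu y)}$. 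Setting $g(\xi):=2^{-1/3}\nu^{1/3}J_\nu(\nu(1-h_\nu\xi))$ (entire in $\xi$), the scaling $h_\nu=2^{-1/3}\nu^{-2/3}$ makes $J_\nu(\nu(1-h_\nu\xi))=2^{1/3}\nu^{-1/3}g(\xi)$ and $\nu(1-h_\nu\xi)J_\nu'(\nu(1-h_\nu\xi))=-2^{2/3}\nu^{1/3}(1-h_\nu\xi)g'(\xi)$, so after cancelling $2\nu^2 h_\nu$ and recognising the function $\Phi$ of Lemma~\ref{lem:Phi} one gets
\[
\hat K_\nu^{\text{Bessel}}(x,y)=\frac{\Phi(x,y;h_\nu)}{x-y}\,G(x,y;h_\nu),\qquad G(x,y;h_\nu):=(1-h_\nu y)\,g(x)g'(y)-(1-h_\nu x)\,g(y)g'(x).
\]
Note $G(y,x;h_\nu)=-G(x,y;h_\nu)$, so $G$ vanishes on the diagonal (the pole at $x=y$ is removable, as it must be).

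\emph{Step 2: turning‑point expansion of $g$.} Since $z=1-h_\nu\xi$ lies in the turning‑point region $z\approx1$, I would invoke the uniform, repeatedly differentiable Olver expansion of $J_\nu(\nu z)$ from Appendix~\ref{sect:bessel}. The Olver variable satisfies $\nu^{2/3}\zeta(1-h_\nu\xi)=\xi\big(1+\rho_1 h_\nu\xi+\rho_2 h_\nu^2\xi^2+\cdots\big)$ with rational $\rho_k$; Taylor‑expanding the Airy functions about $\xi$ (and reducing $\Ai''=\xi\Ai$), the amplitude $(4\zeta/(1-z^2))^{1/4}$, and the Olver coefficients $A_s(\zeta),B_s(\zeta)$ (analytic at the turning point $\zeta=0$, with rational Taylor coefficients there) yields
\[
g(\xi)=\sum_{k=0}^{m}g_k(\xi)\,h_\nu^k+h_\nu^{m+1}\,O\big(e^{-\xi}\big),\qquad g_k(\xi)=\alpha_k(\xi)\Ai(\xi)+\beta_k(\xi)\Ai'(\xi),
\]
uniformly for $t_0\leq\xi<h_\nu^{-1}$ (with a little room to spare), where $\alpha_k,\beta_k$ are rational polynomials, $\alpha_0\equiv1$, $\beta_0\equiv0$; the same holds for $g'$ on differentiating. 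Near $\xi=h_\nu^{-1}$ the Airy argument $\nu^{2/3}\zeta$ is of order $\nu^{2/3}$, and the superexponential decay of $\Ai,\Ai'$ keeps all remainders below $h_\nu^{m+1}e^{-\xi}$ there, so one needs the Olver bound valid down to $z\to0^+$.

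\emph{Step 3: assembling, divisibility, remainder.} Substituting the expansions of $g,g'$ into $G$ and collecting powers of $h_\nu$ gives $G=\sum_{j=0}^{m}G_j(x,y)h_\nu^j+h_\nu^{m+1}H(x,y;h_\nu)$, each $G_j$ an antisymmetric finite sum $\sum_{\kappa,\lambda\in\{0,1\}}q_{j,\kappa\lambda}(x,y)\Ai^{(\kappa)}(x)\Ai^{(\lambda)}(y)$ with rational polynomial $q_{j,\kappa\lambda}$, and $G_0=\Ai(x)\Ai'(y)-\Ai'(x)\Ai(y)$, so $G_0/(x-y)=K_0$. Writing $\Phi=1-(x-y)^2\sum_{k\geq2}r_k(x,y)h_\nu^k$ from Lemma~\ref{lem:Phi}, the $(\Phi-1)$‑contribution to $\hat K_\nu^{\text{Bessel}}$ is $-(x-y)\big(\sum_{k\geq2}r_k h_\nu^k\big)G$, already of the asserted finite‑rank rational‑polynomial form, while the main contribution $G/(x-y)$ requires dividing each $G_j$ by $x-y$. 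Antisymmetry forces $q_{j,00},q_{j,11}$ to be divisible by $x-y$, but divisibility of $q_{j,01}$ by $x-y$ is a genuinely extra condition: since $\Ai^2,\Ai\Ai',\Ai'^2$ are linearly independent over the polynomials, the vanishing of $G_j$ on the diagonal only recovers what antisymmetry already gives. This divisibility is exactly what one checks by inspection for $1\leq j\leq m_*=100$; granting it, $G_j/(x-y)$ has the form claimed for $K_j$, and unwinding the computation gives $g_1(\xi)=\tfrac15\xi\Ai(\xi)+\tfrac3{10}\xi^2\Ai'(\xi)$, hence \eqref{eq:K1}, with \eqref{eq:K2}b obtained one order further. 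Finally $H$ is entire and antisymmetric in $(x,y)$ with $H=O(e^{-(x+y)})$ uniformly; splitting into $|x-y|\geq1$ (where $1/|x-y|\leq1$) and $|x-y|<1$ (where $H(x,y)=\int_x^y\partial_2 H(x,\sigma)\,d\sigma$ with a Cauchy estimate for $\partial_2 H$), one gets $H/(x-y)=O(e^{-(x+y)})$; the Airy‑type superexponential decay absorbs the linear factor $x-y$ from the $(\Phi-1)$‑term, so all error contributions stay $h_\nu^{m+1}O(e^{-(x+y)})$. Differentiability w.r.t.\ $x,y$ is inherited from the differentiable Olver expansion at every step.

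\emph{Expected main obstacle.} The only non‑routine ingredient is the divisibility of $q_{j,01}(x,y)$ by $x-y$: there is no structural reason for it in sight, so it must be verified polynomial by polynomial, which is the sole source of the restriction $m\leq m_*=100$. A secondary technical nuisance is to propagate the uniform error bounds and their derivatives all the way to the hard‑edge end $\xi=h_\nu^{-1}$ of the range and through the division by $x-y$, as indicated above.
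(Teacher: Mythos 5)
Your proposal follows essentially the same route as the paper: factor the transformed kernel as $\Phi(x,y;h_\nu)\cdot G(x,y;h_\nu)/(x-y)$ with $\Phi$ from Lemma~\ref{lem:Phi}, expand the rescaled Bessel factor via the uniform Olver turning-point expansion, antisymmetrize, and observe that dividing by $x-y$ is where the only non-structural step (divisibility of the cross-term coefficients $q_{j,01}$ by $x-y$, checked by inspection for $j\leq m_*$) enters; your algebraic reduction in Step~1 and the identification of the divisibility bottleneck in Step~3 match the paper's proof precisely. The one small point you gloss over is that the power series for $\Phi$ in Lemma~\ref{lem:Phi} converges uniformly only for $x,y<(1-\delta)h_\nu^{-1}$, so the lemma's full claimed range $x,y<h_\nu^{-1}$ cannot be reached by simply substituting that series: the paper handles the boundary layer $\tfrac34 h_\nu^{-1}\leq x\text{ or }y<h_\nu^{-1}$ by a separate absorption argument, using the superexponential smallness of both the kernel (via $J_\nu(\nu z)$ with $z\leq\tfrac14$) and the Airy-based expansion terms there, together with the bound $0<\Phi\leq1$ instead of the $\Phi$ series. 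You invoke superexponential decay near $h_\nu^{-1}$ for the Olver remainders but not to sidestep the $\Phi$-series divergence, so an explicit case split along those lines still needs to be added.
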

\begin{proof} We have to prove, analytically, the claim about the domain of uniformity of the error of the expansion and, algebraically, the finite-rank structure of the expansion kernels $K_j$. In our original proof,\footnote{Which has the merit of being comparatively short and suggesting the nonlinear transform used in Lemma~\ref{lem:Khexpan}.} 
presented below, we use Olver's expansion of Bessel functions of large order in the transition region (see Appendix~\ref{sect:bessel}) and the finite-rank structure is obtained by explicitly inspecting (with Mathematica) a certain algebraic condition (see \eqref{eq:divisibility}) for the first instances $j=1,\ldots,m_*$---we choose to stop at $m_*=100$. However, using the machinery of Riemann--Hilbert problem, this restriction was recently removed by Yao and Zhang \cite{arXiv:2309.06733} (their proof extends over 18 pages); we will comment on their work at the end. 

\medskip

{\em The original proof, requiring $m\leq m_*$.} By using $\Phi(x,y;h)$ as defined in \eqref{eq:Phi} and writing
\begin{gather*}
\phi_\nu(t) = \omega_\nu(t)^2,\quad \omega_\nu(t)=\nu(1-h_\nu t),\\*[2mm]
 a_\nu(x,y)= (1-h_\nu y) \cdot \frac{1}{\sqrt{2h_\nu}}J_\nu\big(\omega_\nu(x)\big)\cdot \frac{d}{dy} \frac{1}{\sqrt{2h_\nu}}J_\nu\big(\omega_\nu(y)\big)
\end{gather*}
we can factor the transformed Bessel kernel in the simple form
\[
\hat K_\nu^{\text{Bessel}}(x,y) = \Phi(x,y;h_\nu) \cdot \frac{a_\nu(x,y)-a_\nu(y,x)}{x-y},
\] 
noting, by symmetry, the removability of the singularities at $x=y$ of the second factor. 

First, if $x$ or $y$ is between $\tfrac34 \cdot h_\nu^{-1}$ and $h_\nu^{-1}$, using the bound $0<\Phi\leq 1$ (see \eqref{eq:PhiBound}) one can argue as in the proof of Lemma~\ref{lem:Olver1952}: since at least one of the Bessel factors is of the form $J_\nu^{(\kappa)}(z)$ with $0\leq z \leq 1/4$, which plainly falls into the superexponentially decaying region as $\nu\to\infty$, and since at least one of the Airy factors of each term of the expansion is also superexponentially decaying as $\nu\to\infty$, the transformed Bessel kernel and the expansion terms in \eqref{eq:besselkernexpan} get completely absorbed into the error term (bounding the other factors as in Sect.~\ref{sect:bounds})
\[
 h_\nu^{m+1}\cdot O\big(e^{-(x+y)}\big).
\]
Here, the removable singularities at $x=y$ are dealt with by using the differentiability of the corresponding bounds (or by extending to the complex domain and using Cauchy's integral formula as in the proof of \cite[Prop.~8]{MR1986402}).

Therefore, we may suppose from now on that $t_0 \leq x,y \leq \tfrac34 \cdot h_\nu^{-1}$. By Lemma~\ref{lem:Phi}, in this range of $x$ and $y$,  the power series expansion
\begin{equation}\label{eq:lemPhiExpan}
\Phi(x,y;h_\nu) = 1 - (x-y)^2 \sum_{k=2}^\infty  r_k(x,y) h_\nu^k
\end{equation}
converges uniformly. Here, the $r_k(x,y)$ are certain homogeneous symmetric rational polynomials of degree $k-2$; the first of them being $r_2(x,y)=1/8$.

Next, we rewrite the uniform version of the large order expansion of Bessel functions in the transition region, as given in Lemma~\ref{lem:Olver1952}, in the form
\begin{equation}\label{eq:Olver_short}
\frac{1}{\sqrt{2h_\nu}} J_\nu\big(\omega_\nu(t)\big) =\big(1 + h_\nu p_m(t;h_\nu)\big) \Ai(t) +  h_\nu q_m(t;h_\nu) \Ai'(t) + h_\nu^{m+1} O(e^{-t}),
\end{equation} 
where the estimate of the remainder is
uniform for $t_0 \leq t < h_\nu^{-1}$ as $h_\nu\to 0^+$ and
\[
p_m(t;h) = 2^{1/3}\sum_{k=0}^{m-1}  A_{k+1}(-t/2^{1/3}) \,(2^{1/3}h)^{k}, \quad q_m(t;h) = 2^{2/3}\sum_{k=0}^{m-1} B_{k+1}(-t/2^{1/3}) \,(2^{1/3}h)^{k},
\]
with the polynomials $A_k(\tau)$ and $B_k(\tau)$ from \eqref{eq:Olver1952}. It follows from Remark~\ref{rem:PQcompute} that
$p_m(t;h)$ and $q_m(t;h)$ are rational polynomials in $t$ and $h$, starting with
\[
p_2(t;h) = \frac{2}{10} t + \frac{h}{1400}(63t^5+120t^2),\quad
q_2(t;h) = \frac{3}{10} t^2 + \frac{h}{1400}(340t^3+40).
\]
Also given in Lemma~\ref{lem:Olver1952}, under the same conditions,  the expansion~\eqref{eq:Olver_short} can be repeatedly differentiated w.r.t~$t$ while preserving uniformity.
From this we obtain, using the Airy differential equation $\Ai''(\xi) = \xi\Ai(\xi)$, that uniformly  (given the range $x$ and $y$)\footnote{Because of the superexponential decay \eqref{eq:airyexpan} of the Airy function $\Ai(t)$ and its derivative $\Ai'(t)$ as $t\to+\infty$, cross terms with the remainder are uniformly estimated in the form
\[
\text{polynomial$(x)$} \cdot \Ai^{(\kappa)}(x) \cdot O\big(e^{-y}\big) = O\big(e^{-(x+y)}\big).
\]}
\[
a_\nu(x,y) = \sum_{\kappa,\lambda\in\{0,1\}} p_{\kappa\lambda}^m(x,y;h_\nu)  \Ai^{(\kappa)}(x)\Ai^{(\lambda)}(y)  +  h_\nu^{m+1} O(e^{-(x+y)}),
\]
where 
\begin{align*}
p^m_{00}(x,y;h) &= h(1- hy)(1 + hp_m(x;h))\big(yq_m(y;h)+\partial_y p_m(y;h)\big), \\*[1mm]
p^m_{01}(x,y;h) &= (1- hy)(1 + hp_m(x;h))\big(1+h(p_m(y;h)+\partial_y q_m(y;h))\big),\\*[1mm]
p^m_{10}(x,y;h) &= h^2(1- hy)q_m(x;h)\big(y q_m(y;h)+\partial_y p_m(y;h)\big),\\*[1mm]
p^m_{11}(x,y;h) &= h(1- hy)q_m(x;h)\big(1+h(p_m(y;h)+\partial_y q_m(y;h))\big)
\end{align*}
are rational polynomials in $x$, $y$ and $h$. In particular, those factorizations show
\begin{gather*}
p^m_{00}(x,y;h) = O(h), \qquad p^m_{11}(x,y;h) = O(h),\\*[1mm]
p^m_{01}(x,y;h) = 1 + O(h), \qquad p^m_{10}(x,y;h) = O(h^2).
\end{gather*}
If we denote by $\hat p^m_{\kappa\lambda}(x,y;h)$ the polynomials obtained from  $p^m_{\kappa\lambda}(x,y;h)$ after dropping all powers of $h$ that have an exponent larger than $m$ (thus contributing terms to the expansion that get absorbed in the error term), we obtain
\[
a_\nu(x,y) =  \Ai(x)\Ai'(y) + \sum_{\kappa,\lambda\in\{0,1\}} \hat p^m_{\kappa\lambda}(x,y;h_\nu)  \Ai^{(\kappa)}(x)\Ai^{(\lambda)}(y)  +  h_\nu^{m+1} O(e^{-(x+y)}),
\]
with a polynomial expansion
\[
\hat p^m_{\kappa\lambda}(x,y;h) = \sum_{j=1}^m \hat p_{j,\kappa\lambda}(x,y) h^j
\]
whose coefficient polynomials $\hat p_{j,\kappa\lambda}(x,y)$, being the unique expansion coefficients as $h_\nu\to 0^+$,  are now {\em independent} of $m$. Hence, the anti-symmetrization of $a_\nu(x,y)$ satisfies the uniform expansion (given the range of $x$ and $y$)  
\begin{multline}\label{eq:antisymmetric}
a_\nu(x,y)-a_\nu(y,x) = \Ai(x)\Ai'(y)- \Ai'(x)\Ai(y) \\*[1mm]
+ \sum_{\kappa,\lambda\in\{0,1\}} \hat q^m_{\kappa\lambda}(x,y;h_\nu)  \Ai^{(\kappa)}(x)\Ai^{(\lambda)}(y)  +  h_\nu^{m+1} O(e^{-(x+y)})
\end{multline}
with the polynomial expansion
\begin{gather*}
\hat q^m_{\kappa\lambda}(x,y;h) = \sum_{j=1}^m \hat q_{j,\kappa\lambda}(x,y) h^j,\\*[1mm]
\hat q_{j,00}(x,y) = \hat p_{j,00}(x,y) - \hat p_{j,00}(y,x),\qquad  \hat q_{j,11}(x,y) = \hat p_{j,11}(x,y) - \hat p_{j,11}(y,x)\\*[1mm]
\hat q_{j,01}(x,y) = \hat p_{j,01}(x,y) - \hat p_{j,10}(y,x),\qquad \hat q_{j,10}(x,y) = - \hat q_{j,01}(y,x).
\end{gather*}
Since the rational polynomials $\hat q_{j,00}(x,y)$ and  $\hat q_{j,11}(x,y)$ are anti-symmetric in $x$, $y$, they factor in the form
\begin{equation}\label{eq:divisibility}
(x-y) \times (\text{symmetric rational polynomial in $x$ and $y$});
\end{equation}
the first few cases are
\begin{align*}
\hat q_{1,00}(x,y) &= -\frac{3}{10}(x-y)(x^2+xy+y^2),\\*[1mm] 
\hat q_{1,11}(x,y) &= \frac{3}{10}(x-y)(x+y),\\*[1mm]
\hat q_{2,00}(x,y) &= \frac{1}{1400} (x-y)\Big(-235\big(x^3+y^3\big)-319xy(x+y)+56\Big),\\*[1mm]
\hat q_{2,11}(x,y) &= \frac{1}{1400} (x-y)\Big(340\big(x^2+y^2\big)+256xy\Big).
\end{align*}
Even though there is no straightforward structural reason for the rational polynomials $\hat q_{j,01}$ (and thus $\hat q_{j,10}$) to be divisible by $x-y$ as well, an inspection\footnote{See Remark~\ref{rem:PQcompute} for the computation of the polynomials $A_k$, $B_k$ and thus $q_{j,01}(x,y)$---a Mathematica notebook comes with the source files at \url{https://arxiv.org/abs/2301.02022}.} of the first cases reveals this to be true for at least $j=1,\ldots,m_*$; the first two of them being 
\begin{align*}
\hat q_{1,01}(x,y) &= \frac{2}{10}(x-y),\\*[1mm]
\hat q_{2,01}(x,y) &= \frac{1}{1400}(x-y) \Big(63\big(x^4+x^3 y-x^2 y^2-x y^3-y^4\big) +120 x + 64y\Big). 
\end{align*}
Now, by restricting ourselves to the explicitly checked cases $m\leq m_*$, we denote by $q^m_{\kappa\lambda}(x,y;h)$ the polynomials obtained from $\hat q^m_{\kappa\lambda}(x,y;h)$ after division by the factor $x-y$. Since \eqref{eq:antisymmetric} is an expansion of an anti-symmetric function with anti-symmetric remainder which can repeatedly be differentiated, division by $x-y$ yields removable singularities at $x=y$ and does not change the character of the expansion (see also the argument given in the proof of \cite[Prop.~8]{MR1986402}):
\[
\frac{a_\nu(x,y)-a_\nu(y,x)}{x-y} =
K_0(x,y)+ \sum_{\kappa,\lambda\in\{0,1\}} q^m_{\kappa\lambda}(x,y;h_\nu)  \Ai^{(\kappa)}(x)\Ai^{(\lambda)}(y) 
  +  h_\nu^{m+1} O\big(e^{-(x+y)}\big).
\]
The lemma now follows by multiplying this expansion with \eqref{eq:lemPhiExpan}, noting that the terms
\[
-r_k(x,y) (x-y)^2 K_0(x,y) =  -r_k(x,y) (x-y) \big(\!\Ai(x)\Ai'(y) - \Ai'(x)\Ai(y) \big) 
\]
also take the form asserted for the terms in the kernels $K_j$ ($j\geq 1$).

Finally, since all the expansions can repeatedly be differentiated under the same conditions while preserving their uniformity, the same holds for the resulting expansion of the kernel.

\medskip

{\em Comments on the unconditional proof of Yao and Zhang \cite{arXiv:2309.06733}.} Instead of using expansions of the Bessel functions of large order in the transition region, Yao and Zhang address expanding the integrable kernel
\[
\frac{a_\nu(x,y)-a_\nu(y,x)}{x-y}
\]
directly by the machinery of Riemann--Hilbert problems (see \cite{MR1730504} for a general discussion of kernels of that form and their induced integral operators). The advantage of such a direct approach is a better understanding of the polynomial coefficients in \eqref{eq:antisymmetric} and the divisibility by $x-y$ follows from an interesting algebraic structure of the Airy functions: namely, by the Airy differential equation there are rational polynomials $\alpha_k, \beta_k \in \Q[\xi]$ such that
\[
\Ai^{(k)}(\xi) = \alpha_k(\xi) \Ai(\xi) + \beta_k(\xi) \Ai'(\xi)
\] 
and the divisibility \eqref{eq:divisibility} turns out to be equivalent \cite[pp.~18--20]{arXiv:2309.06733} to the relation \cite[Lemma~4.1]{arXiv:2309.06733}
\[
\sum_{j=0}^m \binom{m}{j} 
\begin{vmatrix}
\alpha_j & \alpha_{m+1-j} \\
\beta_j & \beta_{m+1-j}
\end{vmatrix}
= 0 \qquad (m\geq 1),
\]
which can be proved by induction.
\end{proof}

\begin{remark} The case $m=0$ of Lemma~\ref{lem:besselkernexpan}, i.e.,
\[
\hat K_\nu^{\text{Bessel}}(x,y) = K_0(x,y) + h_\nu\cdot O\big(e^{-(x+y)}\big)
\]
is the $\beta=2$ case of \cite[Eq.~(4.8)]{MR1986402} in the work of Borodin and Forrester. There, in \cite[Prop.~8]{MR1986402} it is stated that this expansion would be uniformly valid for $x,y \geq t_0$. However, stated in such a generality, it is not correct (see Fn.~\ref{foot:bessel_counter}) and, in fact, similar to our proof given above, their proof is restricted to the range $t_0\leq x,y < h_\nu^{-1}$, which completely suffices to address the hard-to-soft edge transition. (See Remark~\ref{rem:besselshort} for yet another issue with \cite[Prop.~8]{MR1986402}.)
\end{remark}

To reduce the complexity of calculating the functional form of the first three finite-size correction terms in the hard-to-soft edge transition \eqref{eq:hard2soft}, we consider a second kernel transform.

\begin{lemma}\label{lem:Khexpan} For $h>0$, the Airy kernel $K_0$ and the first expansion kernels $K_1$, $K_2$, $K_3$ from Lemma~\ref{lem:besselkernexpan} we consider 
\[
K_{(h)}(x,y) := K_0(x,y) + K_1(x,y) h + K_2(x,y)h^2 + K_3(x,y)h^3
\]
and the transformation,\footnote{Note that for $z=1-h_\nu t$ we thus get $\nu z = \omega_\nu(t)$ and $\nu^{2/3}\zeta = s$ in Olver's expansion \eqref{eq:Olver1954}. As it turns out, by using this transformation, the kernel expansion simplifies in the same fashion also for $m\geq 2$, cf. Fn.~\ref{fn:m5}.} where $\zeta(z)$ is defined as in Sect.~{\rm \ref{sect:bessel}},
\begin{equation}\label{eq:psih}
s = \psi_h^{-1}(t):=2^{-1/3} h^{-1} \zeta(1-h t).
\end{equation}
Then $t=\psi_h(s)$ maps $s \in \R$ monotonically increasing to $-\infty < t < h^{-1}$, with $t\leq \mu h^{-1}$, $\mu = 0.94884\cdots$, when $s\leq 2h^{-1}$, and induces the symmetrically transformed kernel
\begin{subequations}\label{eq:auxkernelexpan}
\begin{align}
\tilde K_{(h)}(x,y) &:= \sqrt{\psi_h'(x)\psi_h'(y)} \, K_{(h)}(\psi_h(x),\psi_h(y))\\
\intertext{which expands as}
\tilde K_{(h)}(x,y) &= K_0(x,y) + \tilde K_1(x,y)h + \tilde K_2(x,y)h^2 + \tilde K_3(x,y)h^3 + h^4 \cdot O\big(e^{-(x+y)}\big),
\end{align}
uniformly in $s_0 \leq x,y \leq 2h^{-1}$ as $h\to 0^+$, $s_0$ being a fixed real number. The three kernels are
\begin{align}
\tilde K_1 &= \frac{1}{5}(\Ai\otimes \Ai' + \Ai'\otimes \Ai)\label{eq:K1tilde},\\*[2mm]
\tilde K_2 &= -\frac{48}{175}\Ai\otimes\Ai + \frac{11}{70}(\Ai\otimes\Ai'''+\Ai'''\otimes\Ai) - \frac{51}{350}(\Ai'\otimes\Ai''+\Ai''\otimes\Ai'),\\*[2mm]
\tilde K_3 &= -\frac{176}{1125} (\Ai\otimes \Ai'' + \Ai''\otimes \Ai) +\frac{13}{450} (\Ai\otimes \Ai^{\rm V} + \Ai^{\rm V}\otimes \Ai) + \frac{3728}{7875}\Ai'\otimes \Ai' \\*[1mm]
& \qquad  -\frac{583}{5250} (\Ai'\otimes \Ai^{\rm IV} + \Ai^{\rm IV}\otimes \Ai') + \frac{13}{225} (\Ai''\otimes \Ai''' + \Ai'''\otimes \Ai'').\notag
\end{align}
\end{subequations}
Preserving uniformity, the kernel expansion can repeatedly be differentiated w.r.t. $x$, $y$.
\end{lemma}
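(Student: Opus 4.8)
\emph{Proof idea.} The plan is to read \eqref{eq:psih} as an explicit analytic substitution, expand the transformed kernel in powers of $h$ up to order $h^2$, and reduce every coefficient to the Airy functions $\Ai,\Ai'$ by means of the Airy equation $\Ai''(\xi)=\xi\Ai(\xi)$; uniformity up to $x,y=2h^{-1}$ is then obtained, exactly as in the proof of Lemma~\ref{lem:besselkernexpan}, by splitting off a region in which all functions in sight are superexponentially small.

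First I would record the properties of $\psi_h$ that follow from those of $\zeta$ developed in Sect.~\ref{sect:bessel}: $\zeta$ is analytic and strictly decreasing on $(0,\infty)$ with $\zeta(1)=0$ and local expansion $\zeta(z)=2^{1/3}(1-z)\bigl(1+\tfrac{3}{10}(1-z)+\cdots\bigr)$. Hence $\psi_h^{-1}(t)=2^{-1/3}h^{-1}\zeta(1-ht)$ is a strictly increasing analytic bijection of $(-\infty,h^{-1})$ onto $\R$, so $\psi_h$ is well defined, strictly increasing and analytic from $\R$ onto $(-\infty,h^{-1})$. Since $\zeta$ is decreasing, $s\leq 2h^{-1}$ is equivalent to $\zeta(1-ht)\leq 2^{4/3}$, i.e. to $1-ht\geq z_0$ where $\zeta(z_0)=2^{4/3}$; this gives $t\leq\mu h^{-1}$ with $\mu=1-z_0$, and solving $\tfrac{8}{3}=\log\frac{1+\sqrt{1-z_0^2}}{z_0}-\sqrt{1-z_0^2}$ numerically yields $\mu=0.94884\cdots$. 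Writing $\psi_h(s)=2^{1/3}s\,g(2^{1/3}hs)$ with $g(w):=(1-\zeta^{-1}(w))/w$ (analytic at $0$, $g(0)=2^{-1/3}$), the point of the bound $s\leq 2h^{-1}$ is that it keeps $w=2^{1/3}hs$ in the fixed compact set $[\,2^{1/3}hs_0,\,2^{4/3}\,]$, which lies inside the disc of analyticity of $g$ — the nearest singularity of $\zeta^{-1}$ being the critical value $\zeta(-1)$, of modulus $(3\pi/2)^{2/3}\approx 2.811>2^{4/3}$. On that set $g$ and its derivatives are uniformly bounded, so Taylor's theorem at $w=0$ gives
\[
\psi_h(s)=s+\alpha_1(s)h+\alpha_2(s)h^2+O\!\bigl(h^3(1+|s|)^4\bigr),\qquad \alpha_1(s)=-\tfrac{3}{10}s^2,
\]
uniformly on $[s_0,2h^{-1}]$, with rational polynomial coefficients $\alpha_j$ of degree $j+1$; $\psi_h'$ and $\sqrt{\psi_h'}$ expand likewise, $\psi_h'$ staying bounded above and below by positive constants.

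Next, fix a threshold $A_h\asymp(\log h^{-1})^{2/3}$ (with a suitable constant) and work first in the range $s_0\leq x,y\leq A_h$, where $\psi_h(x)-x=O(hx^2)=o(1)$. There one Taylor-expands $\Ai^{(\kappa)}(\psi_h(x))$ about $x$, uses $\Ai''=\xi\Ai$ (and its consequences) to turn every $\Ai^{(j)}$ into a polynomial combination of $\Ai,\Ai'$, and expands the numerator of the Airy kernel the same way; for the denominator one uses $\psi_h(x)-\psi_h(y)=(x-y)\bigl(1-\tfrac{3}{10}(x+y)h+\cdots\bigr)$, noting that $\bigl(\alpha_1(x)-\alpha_1(y)\bigr)/(x-y)=-\tfrac{3}{10}(x+y)$ is a polynomial, so the removable singularity at $x=y$ survives the expansion and its differentiation (as in Lemma~\ref{lem:besselkernexpan}). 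Collecting powers of $h$ in $\sqrt{\psi_h'(x)\psi_h'(y)}\,\bigl(K_0+K_1h+K_2h^2\bigr)(\psi_h(x),\psi_h(y))$: the $h^0$ term is $K_0(x,y)$; at order $h^1$ the $(x+y)K_0(x,y)$ contributions from the Jacobian $\sqrt{\psi_h'(x)\psi_h'(y)}=1-\tfrac{3}{10}(x+y)h+\cdots$ and from the expansion of $1/(u-v)$ cancel, leaving
\[
\tilde K_1=-\tfrac{3}{10}\bigl((x+y)\Ai'(x)\Ai'(y)-(x^2+xy+y^2)\Ai(x)\Ai(y)\bigr)+K_1(x,y),
\]
which, after inserting \eqref{eq:K1}, collapses to $\tfrac{1}{5}(\Ai\otimes\Ai'+\Ai'\otimes\Ai)$; the $h^2$ coefficient is obtained the same way — now also using $\alpha_2$, the $h^2$ part of $\sqrt{\psi_h'}$, and the $h^1$ part of $K_1$ together with $K_2$ — and simplifies (the lengthy but mechanical step) to the stated $\tilde K_2$. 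Each remainder is a finite sum of terms $h^3\,p(x,y)\,\Ai^{(\kappa)}(\xi_x)\Ai^{(\lambda)}(\xi_y)$ with $\xi_x$ between $\psi_h(x)$ and $x$; since these intermediate arguments differ from $x,y$ by $o(1)$, the bound $|p(\xi)\Ai^{(\kappa)}(\xi)|\leq a_p e^{-\xi}$ of Sect.~\ref{sect:bounds} turns the remainder into $h^3\cdot O(e^{-(x+y)})$.

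In the complementary range, where $x$ or $y$ exceeds $A_h$, I would argue as in Lemma~\ref{lem:besselkernexpan}. If $x\geq A_h$ then $\psi_h(x)=2^{1/3}x\,g(2^{1/3}hx)\geq cx$ for a fixed $c>0$ ($g$ being bounded below by a positive constant on $[0,2^{4/3}]$), whence $\tfrac{2}{3}\psi_h(x)^{3/2}\geq x+3\log h^{-1}$ once $A_h$ is chosen large enough; therefore $\Ai^{(\kappa)}(\psi_h(x))$ and $\Ai^{(\kappa)}(x)$ are so small that each of $\tilde K_h(x,y)$, $K_0(x,y)$, $\tilde K_1(x,y)h$ and $\tilde K_2(x,y)h^2$ is already $\leq h^3 e^{-(x+y)}$ (using $|K_0(u,v)|\leq Ce^{-\frac23 u^{3/2}}e^{-v}$ for $u\geq 0$ and absorbing polynomial factors as before), so the whole expansion is swallowed by the error term. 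Finally, differentiability: $\zeta$, $g$, $\psi_h$ and the Airy functions are analytic, $\partial_x$ merely exchanges $\Ai\leftrightarrow\Ai'$ and introduces polynomial factors — again absorbed via $|p(\xi)\Ai^{(\kappa)}(\xi)|\leq a_p e^{-\xi}$ — so \eqref{eq:auxkernelexpan} can repeatedly be differentiated in $x,y$ with uniformity preserved. The delicate point, which is also what forces the otherwise arbitrary-looking constant $2h^{-1}$, is precisely this interface: $x,y$ must stay small enough that $2^{1/3}hx$ remains inside the analyticity disc of $g$ (radius $(3\pi/2)^{2/3}$), while the two-region split has to be arranged so that wherever the Taylor-in-$h$ expansion becomes too coarse the superexponential decay of the Airy functions takes over.
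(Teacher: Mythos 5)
Your proof is correct and follows essentially the same route as the paper's: reversion of the power series \eqref{eq:zetaseries} to get $\psi_h(s)=s-\tfrac{3}{10}s^2h-\tfrac{1}{350}s^3h^2+\cdots$, a truncated power-series computation of the transformed kernel, reduction of the coefficient kernels via the Airy equation, and control of the remainder by the bounds of Sect.~\ref{sect:bounds}. Where you add value is in making the remainder estimate explicit: the paper's closing sentence (``the result now follows from the bounds given in Sect.~\ref{sect:bounds}'') glosses over the fact that for $x$ near $2h^{-1}$ the shift $\psi_h(x)-x=O(hx^2)$ is no longer $o(1)$, so the naive Lagrange remainder of the Taylor expansion of $\Ai^{(\kappa)}(\psi_h(x))$ about $x$ cannot directly be bounded by $e^{-x}$ via $|p(\xi)\Ai^{(\kappa)}(\xi)|\leq a_p e^{-\xi}$; your two-range split at $A_h\asymp(\log h^{-1})^{2/3}$ together with the lower bound $\psi_h(x)\geq cx$ and the sharper decay $e^{-\frac{2}{3}u^{3/2}}$ of the Airy function is exactly what is needed to close that gap, and it mirrors the analogous split at $\tfrac{3}{4}h_\nu^{-1}$ done explicitly in the proof of Lemma~\ref{lem:besselkernexpan}. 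Your explicit derivation of the constant $\mu$ from $\zeta(z_0)=2^{4/3}$, the cancellation check giving $\tilde K_1$, and the identification of the analyticity radius $(3\pi/2)^{2/3}>2^{4/3}$ of $g(w)=(1-\zeta^{-1}(w))/w$ (which explains why the cutoff $s\leq 2h^{-1}$ is admissible) are all consistent with, and slightly more informative than, what the paper records.
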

\begin{proof} Reversing the power series \eqref{eq:zetaseries} gives
\[
t = \psi_h(s) = s - \frac{3s^2}{10} h - \frac{s^3}{350} h^2 +\frac{479 s^4}{63000}h^3  + \cdots,
\]
which is uniformly convergent for $s_0 \leq s \leq 2h^{-1}$ since $|ht| \leq \mu < 1$. Taking the expressions for $K_1$, $K_2$ given in \eqref{eq:K2}, and for $K_3$ from the supplementary Mathematica notebook referred to in Fn.~\ref{fn:m5}, a routine calculation with truncated power series gives formula \eqref{eq:K1tilde} for $\tilde K_1$ and 
\[
\begin{gathered}
\tilde K_2(x,y) = \frac{1}{350} \Big(14\Ai(x)\Ai(y) + (-51 x + 55 y)\Ai(x)\Ai'(y) + (55x -51 y) \Ai'(x)\Ai(y) \Big),\\*[2mm]
\tilde K_3(x,y) = \frac{1}{15750}\Big(266(x+y)\Ai(x)\Ai(y) + (-1749 x^2+910 x y+455 y^2) \Ai(x)\Ai'(y)\\*[1mm]
 \qquad + (455 x^2+910 x y-1749 y^2) \Ai'(x)\Ai(y) + 460 \Ai'(x)\Ai'(y)\Big).
\end{gathered}
\]
The Airy differential equation $\xi \Ai(\xi) = \Ai''(\xi)$ implies the replacement rule
\begin{equation}\label{eq:airyrule}
\xi^j \Ai^{(k)}(\xi) = \xi^{j-1} \Ai^{(k+2)}(\xi) - k \xi^{j-1} \Ai^{(k-1)}(\xi) \qquad (j\geq 1, \; k\geq 0)
\end{equation}
which, if repeatedly applied to a kernel of the given structure, allows us to absorb any powers of $x$ and $y$ into higher order derivatives of $\Ai$. This process yields the asserted form of $\tilde K_2$ and $\tilde K_3$, which will be the preferred form in course of the calculations in Sect.~\ref{sect:functionalform}.

Since we stay within the range of uniformity of the power series expansions and calculations with truncated powers series are amenable to repeated differentiation, the result now follows from the bounds given in Sect.~\ref{sect:bounds}.
\end{proof}

\subsection{Proof of the general form of the expansion}\label{sect:generalform}

By Lemma~\ref{lem:besselkernexpan} and Thm.~\ref{thm:detexpan} we get (the Fredholm determinants are seen to be equal by transforming the integrals)
\begin{multline*}
E_2^{\text{hard}}(\phi_\nu(t);\nu) = \det(I - K_\nu^{\text{Bessel}})|_{L^2(0,\phi_\nu(t))} = \det(I - \hat K_\nu^{\text{Bessel}})|_{L^2(t,h_\nu^{-1})} \\*[1mm]
= F(t) + \sum_{j=1}^m F_{j}(t) h_\nu^j + h_\nu^{m+1} O(e^{-2t}) + e^{-h_\nu^{-1}} O(e^{-t}),
\end{multline*}
uniformly for $t_0\leq t < h_\nu^{-1}$ as $h_\nu\to0^+$; preserving uniformity, this expansion can be repeatedly differentiated w.r.t. the variable $t$. By Thm.~\ref{thm:detexpan}, the $F_{j}(t)$ are certain smooth functions that can be expressed in terms of traces of integral operators of the form given in Thm.~\ref{thm:detexpan}. Observing
\[
e^{-h_\nu^{-1}} < e^{-h_\nu^{-1}/2} e^{-t/2} = h_\nu^{m+1} O(e^{-t/2})\qquad (h_\nu\to0^+)
\]
we can combine the two error terms as $h_\nu^{m+1} O(e^{-3t/2})$. This finishes the proof of \eqref{eq:hard2soft}.

\subsection{Functional form of \boldmath $F_{1}(t)$, $F_{2}(t)$ and $F_3(t)$\unboldmath}\label{sect:functionalform}

Instead of calculating $F_{1}$, $F_{2}$, $F_3$ directly from the formulae in Thm.~\ref{thm:detexpan} applied to the kernels $K_1$, $K_2$  in (\ref{eq:K2}) (and to the unwieldy expression for $K_3$ obtained in the supplementary Mathematica notebook referred to in Fn.~\ref{fn:m5}), we will reduce them to the corresponding functions $\tilde F_1, \tilde F_2, \tilde F_3$ induced by the much simpler kernels $\tilde K_1$, $\tilde K_2$, $\tilde K_3$ in (\ref{eq:auxkernelexpan}).

\subsubsection{Functional form of $\tilde F_{1}(t)$ and $\tilde F_{2}(t)$}
Upon writing
\[
u_{jk}(t) = \tr\big((I-K_0)^{-1} \Ai^{(j)} \otimes \Ai^{(k)} \big)\big|_{L^2(t,\infty)}
\]
and observing (the symmetry of the resolvent kernel implies the symmetry $u_{jk}(t)= u_{kj}(t)$)
\begin{align*}
\tr\big((I-K_0)^{-1} \tilde K_1\big)\big|_{L^2(t,\infty)} &= \frac{2}{5} u_{10}(t),\\*[1mm]
\tr\big(((I-K_0)^{-1} \tilde K_1)^2\big)\big|_{L^2(t,\infty)} &= \frac{2}{25} \big(u_{00}(t)u_{11}(t) + u_{10}(t)^2\big),\\*[1mm]
\tr\big((I-K_0)^{-1} \tilde K_2\big)\big|_{L^2(t,\infty)} &= \frac{1}{175}\big(-48u_{00}(t)-51u_{21}(t) +55u_{30}(t)\big),
\end{align*}
the formulae of Thm.~\ref{thm:detexpan} applied to  $\tilde K_1$ and $\tilde K_2$ give 
\begin{subequations}
\begin{align}
\tilde F_{1}(t) &= -\frac{2}{5} F(t) u_{10}(t), \\*[1mm]
\tilde F_{2}(t) &= F(t)\left(\frac{48}{175} u_{00}(t) - \frac{1}{25} \begin{vmatrix}
u_{00}(t) & u_{01}(t) \\
u_{10}(t) & u_{11}(t)
\end{vmatrix} +\frac{51}{175} u_{21}(t) - \frac{11}{35} u_{30}(t) \right).\label{eq:F22tildeVanilla}
\end{align}
\end{subequations}
We recall from \cite[Remark~3.1]{arxiv.2206.09411} that the simple recursion 
\[
u_{jk}'(t) = u_{j+1,k}(t) + u_{j,k+1}(t) - u_{j0}(t) u_{k0}(t)
\]
yields similar formulae for the first few derivatives of the distribution $F(t)$:
\begin{equation}\label{eq:F2der}
\begin{gathered}
F'(t) = F(t)\cdot u_{00}(t),\quad
F''(t) = 2F(t)\cdot u_{10}(t),\quad
F'''(t) = 2F(t)\cdot \big(u_{11}(t)+u_{20}(t)\big),\\*[1mm]
F^{(4)}(t) = 2F(t)\cdot\left( \begin{vmatrix}
u_{00}(t) & u_{01}(t) \\
u_{10}(t) & u_{11}(t)
\end{vmatrix}  + 3u_{21}(t) + u_{30}(t) \right). 
\end{gathered}
\end{equation}
By a linear elimination of the terms 
\[
u_{00}(t), \quad u_{10}(t),\quad \begin{vmatrix}
u_{00}(t) & u_{01}(t) \\
u_{10}(t) & u_{11}(t)
\end{vmatrix}
\]
we obtain, as an intermediate step,
\begin{subequations}\label{eq:tildeF22}
\begin{equation}
\tilde F_{1}(t) = -\frac{1}{5} F''(t),\quad \tilde F_{2}(t) = \frac{48}{175} F'(t) - \frac{1}{50} F^{(4)}(t) + \frac{24}{175}F(t)\big(3u_{21}(t) - 2 u_{30}(t)\big). 
\end{equation}
To simplify even further, we have to refer to the full power of the general Tracy--Widom theory (i.e., representing $F$ in terms of Painlevé II): by advancing its set of formulae, Shinault and Tracy \cite[p.~68]{MR2787973} showed, through an explicit inspection of each single case, that the functions $F(t)\cdot u_{jk}(t)$  in the range $0 \leq j+k\leq 8$ are linear combinations of the form
\begin{equation}\label{eq:ShinaultForm}
 p_1(t) F'(t) + p_2(t) F''(t) + \cdots + p_{j+k+1}(t) F^{(j+k+1)}(t)
\end{equation}
with  rational polynomials $p_\kappa(t)$ (depending, of course, on $j$, $k$). They conjectured this structure to be true for all~$j$,~$k$. In particular, their table \cite[p.~68]{MR2787973} has the entries
\[
F(t) \cdot u_{21}(t) = -\frac{1}{4}F'(t) +\frac{1}{8}F^{(4)}(t), \quad F(t)\cdot  u_{30}(t) = \frac{7}{12} F'(t) + \frac{t}{3} F''(t) + \frac{1}{24} F^{(4)}(t).
\]
This way we get, rather unexpectedly, the simple and short form\footnote{Note that a direct application of the table in \cite[p.~68]{MR2787973} 
to \eqref{eq:F22tildeVanilla} produces a far less appealing result, namely
\[
\tilde F_2(t) = \frac{19}{1050}F'(t) + \frac{t F'(t)^2}{75F(t)} - \frac{11t}{105} F''(t) + \frac{F''(t)^2}{100 F(t)} - \frac{F'(t)F'''(t)}{75F(t)} + \frac{7}{300}F^{(4)}(t),
\]
which is no longer linear in $F$ and its derivatives.
} 
\begin{equation}
\tilde F_2(t) = \frac{2}{175} F'(t) - \frac{16t}{175}F''(t) + \frac{1}{50} F^{(4)}(t).
\end{equation}
\end{subequations}

\subsubsection{Functional form of $\tilde F_{3}(t)$} 

For the $\tilde F_j(t)$ with $j\geq 3$, calculating such functional forms  requires a more systematic, algorithmic approach.
By ``reverse engineering'' the remarks of Shinault and Tracy about validating their table \cite[p.~68]{MR2787973}, we have actually found an algorithm to compile such a table, see Appendix~\ref{app:ST}. This algorithm can also be applied to {\em nonlinear} rational polynomials of the terms $u_{jk}$, resulting either in an expression of the desired form \eqref{eq:ShinaultForm} (with $j+k+1$ replaced by some $n$), or a message that such a form does not exist (for the given $n$).

Now, if we evaluate the expansion function $G_3(t)=F(t)d_3(t)$ of Thm.~\ref{thm:detexpan} by using \eqref{eq:d3}
and rewrite the traces in terms of the $u_{jk}$, we obtain
\begin{subequations}\label{eq:F3tilde}
\begin{align}
\tilde F_3(t) &= F(t) \left(-\frac{3728 }{7875}u_{11}(t) +\frac{352 }{1125}u_{20}(t) -\frac{51}{875} 
\begin{vmatrix}
u_{10}(t) & u_{11}(t) \\
u_{20}(t) & u_{21}(t)
\end{vmatrix}
 \right. \label{eq:F3tildeU}\\*[2mm]
& \hspace*{-0.25cm}\left. -\frac{11}{175} \begin{vmatrix}
u_{00}(t) & u_{01}(t) \\
u_{30}(t) & u_{31}(t)
\end{vmatrix}-\frac{26}{225} u_{32}(t)+\frac{583 }{2625}u_{41}(t)-\frac{13}{225} u_{50}(t)\right)\notag
\intertext{which evaluates by the algorithm of Appendix~\ref{app:ST} further to}
&= \frac{64 t }{7875}F'(t) -\frac{24 t^2 }{875}F''(t) -\frac{122}{7875}F'''(t) 
+\frac{16 t}{875} F^{(4)}(t) -\frac{1}{750} F^{(6)}(t).
\end{align}\\*[-1.5mm]
\end{subequations}
Alternatively, we arrive here by combining the tabulated expressions for $u_{11}$, $u_{20}$, $u_{32}$, $u_{41}$, $u_{50}$ as displayed in \cite[p.~68]{MR2787973}  with those for both of the $2\times 2$ determinants in \eqref{eq:det22}.

\subsubsection{Lifting to the functional form of $F_{1}(t)$, $F_{2}(t)$, $F_{3}(t)$}

The relation between $F_{1}(t)$, $F_{2}(t)$, $F_3(t)$ and their counterparts with a tilde is established by Lemma~\ref{lem:Khexpan}.
By using the notation introduced there, with $t$ being any fixed real number, the expansion parameter $h$ sufficiently small and
$s=\psi_{h}^{-1}(t)$, Thm.~\ref{thm:detexpan} yields (the Fredholm determinants are seen to be equal by transforming the integrals)
\begin{equation}\label{eq:compareKh}
\begin{aligned}
& \det(I-K_h)|_{L^2(t, \,\mu h^{-1})}  = F(t) + F_{1}(t) h + F_{2}(t)h^2 + F_{3}(t)h^3 + O(h^4) \\*[1mm]
= &\det(I-\tilde K_h)|_{L^2(s, \, 2 h^{-1})} = F(s) + \tilde F_{1}(s) h + \tilde F_{2}(s)h^2 + \tilde F_{3}(s)h^3 + O(h^4),
\end{aligned}
\end{equation}
where we have absorbed the exponentially small contributions $e^{-\mu h^{-1}} O(e^{-t})$ and $e^{-2h^{-1}} O(e^{-t})$ into the $O(h^3)$ error term. Using the power series \eqref{eq:zetaseries}, that is,
\[
s = 2^{-1/3} h^{-1} \zeta(1-h t) = t+ \frac{3t^2}{10}h + \frac{32t^3}{175} h^2  + \frac{1037t^4}{7875} h^3 + \cdots,
\]
we get by Taylor expansion, for any smooth function $G(s)$,
\begin{align*}
G(s) &= G(t) + \frac{3t^2}{10} G'(t) h + \left(\frac{32t^3}{175} G'(t) + \frac{9t^4}{200} G''(t) \right) h^2 \\*[1mm]
&\qquad +\left(\frac{1037 t^4}{7875} G'(t)+\frac{48t^5}{875}  G''(t)+ \frac{9 t^6 }{2000}G'''(t)\right)h^3 + O(h^4).
\end{align*}
By plugging this into \eqref{eq:compareKh} and comparing coefficients we obtain
\begin{align*}
F_{1}(t) &= \tilde F_{1}(t) + \frac{3t^2}{10} F'(t),\\*[2mm]
F_{2}(t) &=\tilde F_{2}(t) + \frac{3t^2}{10} \tilde F_{1}'(t) + \frac{32t^3}{175} F'(t) + \frac{9t^4}{200} F''(t),\\*[2mm]
F_{3}(t) &= \tilde F_3(t) + \frac{32t^3}{175}  \tilde F_1'(t)+\frac{9t^4}{200}  \tilde F_1''(t)+\frac{3t^2}{10} \tilde F_2'(t)
+\frac{1037 t^4 }{7875}F'(t)+\frac{48t^5 }{875} F''(t)+ \frac{9 t^6 F'''(t)}{2000}.
\end{align*}
Combined with \eqref{eq:tildeF22}, this finishes the proof of \eqref{eq:F22}.

\subsection{Simplifying the form of Choup's Edgeworth expansions}\label{sect:Choup}
When, instead of the detour via $\tilde K_1$, Thm.~\ref{thm:detexpan} is directly applied to the kernel $K_1$ in \eqref{eq:K1}, 
we get 
\[
F_{1}(t) = - \frac{1}{5}F''(t) + \frac{3}{10} F(t) \tr\big((I-K_0)^{-1} L\big)\big|_{L^2(t,\infty)},
\]
where
\begin{subequations}\label{eq:t2F21}
\begin{equation}\label{eq:L}
L(x,y) = (x^2+xy+y^2)\Ai(x)\Ai(y) - (x+y)\Ai'(x)\Ai'(y).
\end{equation}
Now, a comparison with \eqref{eq:F21} proves the useful formula\footnote{Note that our derivation of this formula does only depend on Fredholm determinants and does not use any representation in terms of Painlevé II. 
Based on Painlevé representations, it has been derived, implicitly though, in the recent work of Forrester and Mays  \cite{arxiv.2205.05257}: see Eqns. (1.16), (1.19), (2.17) and (2.29) there. A further alternative derivation follows from observing that, by repeated application of \eqref{eq:airyrule},
\[
\tr\big((I-K_0)^{-1} L\big)\big|_{L^2(t,\infty)} = -2u_{10}(t) + u_{22}(t) - 2u_{31}(t) + 2u_{40}(t)  
\]
and using the table for the functions $F(t)\cdot u_{jk}(t)$ ($0\leq j+k \leq 8$) compiled in \cite[p.~68]{MR2787973} (which is based on an extension of formulae of the Tracy--Widom theory that represents $F$ in terms of Painlevé II).}
\begin{equation}
F(t) \tr\big((I-K_0)^{-1} L\big)\big|_{L^2(t,\infty)} = t^2 F'(t).
\end{equation}
\end{subequations}
As an application to the existing literature, this formula helps us to simplify the results obtained by Choup for the soft-edge limit expansions of GUE and LUE: that is, 
when studying the distribution of the largest eigenvalue distribution function in $\text{GUE}_n$ and $\text{LUE}_{n,\nu}$ (dimension $n$, parameter $\nu$) as $n\to\infty$. In fact, since the kernel $L$ appears in the first finite-size correction term of a corresponding kernel expansion \cite[Thm.~1.2/1.3]{MR2233711}, lifting that expansion to the Fredholm determinant by Thm.~\ref{thm:detexpan}
allows us to recast \cite[Thm.~1.4]{MR2233711} in a simplified form: namely, denoting the maximum eigenvalues by $\lambda_{n}^\text{G}$ and $\lambda_{n,\nu}^\text{L}$, we obtain, locally uniform in $t$ as $n\to \infty$,
\begin{align}
\prob\Big( \lambda_{n}^\text{G} \leq \sqrt{2n} + t \cdot 2^{-1/2} n^{-1/6} \Big) &= F(t) + \frac{n^{-2/3}}{40}\big(2t^2 F'(t) - 3F''(t)\big) + O(n^{-1}),\label{eq:GUEn}\\*[2mm]
\prob\Big( \lambda_{n,\nu}^\text{L} \leq 4n+2\nu+t\cdot 2(2n)^{1/3} \Big) &= F(t) - \frac{2^{1/3}n^{-2/3}}{10} \big(t^2 F'(t) + F''(t)\big) + O(n^{-1});
\end{align}
 a result, which answers a question suggested by Baik and Jenkins \cite[p.~4367]{MR3161478}.

\section{Expansion of the Poissonized Length Distribution}\label{sect:BaikJenkins}

The Poissonization of the length distribution requires the hard-to-soft edge transition of Thm.~\ref{thm:hard2soft}
to be applied to the probability distribution $E_2^\text{hard}(4r;\nu)$ (for integer $\nu=l$, but we consider the case of general $\nu>0$ first). For large intensities $r$ the mode of this distribution is located in the range of those parameters $\nu$ for which the scaled variable
\begin{subequations}
\begin{equation}\label{eq:tnu}
t_\nu(r)  := \frac{\nu-2\sqrt{r}}{r^{1/6}} \qquad (r>0).
\end{equation}
stays bounded. It is convenient to note that $t_\nu(r)$ satisfies the differential equation
\begin{equation}\label{eq:tnuprime}
t_\nu'(r) = -r^{-2/3} - \frac{r^{-1}}{6} t_\nu(r)\qquad (r>0).
\end{equation}
\end{subequations}
In these terms we get the following theorem.

\begin{theorem}\label{thm:poissonExpandnu}
There holds the expansion
\begin{equation}\label{eq:poissonExpandnu}
E_2^\text{\em hard}(4r;\nu) =  F(t) + \sum_{j=1}^m F_{j}^P(t)\, r^{-j/3} + r^{-(m+1)/3} \cdot O\big(e^{-t}\big)\bigg|_{t=t_\nu(r)},
\end{equation}
which is uniformly valid when $r,\nu \to\infty$ subject to 
\[
t_0 \leq t_\nu(r) \leq r^{1/3},
\]
with $m$ being any fixed non-negative integer and $t_0$ any fixed real number. Preserving uniformity, the expansion can be repeatedly differentiated w.r.t. the variable $r$. Here the $F_{j}^P$ are certain smooth functions{\rm;} the first three are\/\footnote{To validate formulae (\ref{eq:F22P}a--c), Fig.~\ref{fig:F23P} plots $F_3^P(t)$ next to the approximation
\begin{equation}\label{eq:F23P}
F_{3}^P\big(t_\nu(r)\big) \approx r \cdot \Big( E_2^{\text{hard}}(4r;\nu) - F(t) - F_{1}^P(t)r^{-1/3} - F_{2}^P(t) r^{-2/3}\Big)\,\bigg|_{t=t_\nu(r)}
\end{equation}
for $r=250$ and $r=2000$, varying $\nu$ in such a way that $t=t_\nu(r)$ covers the interval $[-6,2]$.}
\begin{subequations}\label{eq:F22P}
\begin{align}
F_{1}^P(t) &= -\frac{t^2}{60} F'(t) - \frac{1}{10} F''(t),\label{eq:F21P}\\*[2mm]
F_{2}^P(t) &= \Big(\frac{1}{350} + \frac{2t^3}{1575}\Big) F'(t) + \Big(\frac{11t}{1050} + \frac{t^4}{7200}\Big) F''(t) + \frac{t^2}{600} F'''(t) + \frac{1}{200} F^{(4)}(t),\\*[2mm]
F_3^P(t) &= 
-\Big(\frac{t}{1125}+\frac{41t^4}{283500}\Big) F'(t)
-\Big(\frac{11 t^2}{6300}+ \frac{t^5}{47250}\Big) F''(t)   \\*[1mm]
&\qquad-\Big(\frac{61}{31500}+\frac{19 t^3}{63000}+\frac{t^6}{1296000} \Big) F'''(t)
-\Big(\frac{11 t}{10500} + \frac{t^4}{72000}\Big) F^{(4)}(t) \notag \\*[1mm]
&\qquad -\frac{t^2}{12000}F^{(5)}(t)
-\frac{1}{6000}F^{(6)}(t). \notag
\end{align}
\end{subequations}
\end{theorem}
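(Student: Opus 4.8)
The plan is to reduce the claim to Theorem~\ref{thm:hard2soft} by a change of the spectral parameter. That theorem asserts, for any fixed real $t_0'$ and any parameter $\tau$ with $t_0' \leq \tau < h_\nu^{-1}$,
\[
E_2^{\text{hard}}(\phi_\nu(\tau);\nu) = F(\tau) + \sum_{j=1}^m F_j(\tau)\,h_\nu^j + h_\nu^{m+1}\cdot O\big(e^{-3\tau/2}\big),
\]
uniformly as $h_\nu \to 0^+$, and repeatedly differentiable in $\tau$. First I would choose $\tau = \tau(r,\nu)$ so that $\phi_\nu(\tau) = 4r$: solving $\nu^2(1 - h_\nu\tau)^2 = 4r$ through the positive root $\nu(1-h_\nu\tau) = 2\sqrt r$ and inserting $\nu = 2\sqrt r + r^{1/6}t = 2 r^{1/2}(1 + \tfrac{t}{2}r^{-1/3})$ with $t := t_\nu(r)$, one gets the closed forms
\[
\tau = t\,\big(1 + \tfrac{t}{2}r^{-1/3}\big)^{-1/3} = t - \tfrac{t^2}{6}r^{-1/3} + \cdots, \qquad h_\nu = \tfrac12 r^{-1/3}\big(1 + \tfrac{t}{2}r^{-1/3}\big)^{-2/3} = \tfrac12 r^{-1/3} - \tfrac{t}{6}r^{-2/3} + \cdots .
\]
Both are power series in $r^{-1/3}$ converging uniformly on the admissible range $t_0 \leq t \leq r^{1/3}$, since there $\tfrac{t}{2}r^{-1/3}$ stays in $[\,-\tfrac12,\tfrac12\,]$ once $r$ is large.

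Next I would verify that Theorem~\ref{thm:hard2soft} applies at $\tau$: from $\tau = t(1+\tfrac t2 r^{-1/3})^{-1/3}$ one reads off $\tau \leq t \leq r^{1/3} < h_\nu^{-1}$, while $(1+\tfrac t2 r^{-1/3})^{-1/3} \to 1$ gives a fixed lower bound $\tau \geq t_0'$ for $r$ large (depending only on $t_0$). Substituting $\tau$ into the displayed expansion, Taylor-expanding $F(\tau)$ about $t$ to order $m$, expanding each $F_j(\tau)$ about $t$ to order $m-j$, and inserting the $r^{-1/3}$-series for $h_\nu^{\,j}$, I collect terms by powers of $r^{-1/3}$; the coefficient of $r^{-j/3}$ is by construction a fixed smooth function $F_j^P(t)$ built from $F$, its derivatives, and the known $F_1,\dots,F_j$ of Theorem~\ref{thm:hard2soft}. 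For $j=1$ this reads $F_1^P(t) = \tfrac12 F_1(t) - \tfrac{t^2}{6}F'(t)$, which together with \eqref{eq:F21} yields \eqref{eq:F21P}; formula (\ref{eq:F22P}b) for $F_2^P$ results the same way from $F_1$, $F_1'$ and $F_2$ in \eqref{eq:F22}, and \eqref{eq:F3P} from $F_3$ in \eqref{eq:F3} --- a mechanical though lengthy manipulation best delegated to a computer algebra system. Repeated differentiability in $r$ (with uniformity) follows from the chain rule, using that the expansion of Theorem~\ref{thm:hard2soft} may be differentiated in $\tau$, that $\tau$ and $h_\nu$ depend smoothly on $r$, and that $t = t_\nu(r)$ satisfies \eqref{eq:tnuprime}.

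The main obstacle is the uniformity of the remainder over the full range $t_0 \leq t \leq r^{1/3}$, where $t$ is unbounded and $|\tau - t|$ is only $O(t^2 r^{-1/3})$, hence \emph{not} small when $t \sim r^{1/3}$; a naive uniform Taylor estimate fails. The resolution is that each Taylor remainder carries a factor $(\tau - t)^k$ growing at most polynomially in $t$ (bounded by a power of $r^{1/3}$) against a derivative $F^{(k)}(\xi)$, $\xi\in[\tau,t]$, of order $O(e^{-2\xi}) = O(e^{-2\tau})$; since $\tau \geq (3/2)^{-1/3}t$ for $t\geq 0$ and $2(3/2)^{-1/3} > 1$, every such product is uniformly $O(e^{-t})$, so all remainders --- including the truncation error of the $h_\nu^{\,j}$-series and the term $h_\nu^{m+1}O(e^{-3\tau/2})$, for which the same estimate gives $e^{-3\tau/2} = O(e^{-t})$ --- get absorbed into $r^{-(m+1)/3}O(e^{-t})$, while for $t$ in a fixed compact set the classical uniform Taylor bound suffices. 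Keeping this bookkeeping consistent, and arranging that it survives the claimed repeated differentiation in $r$, is where essentially all the care goes; producing the explicit $F_j^P$ is routine.
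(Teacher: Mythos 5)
Your proof is correct and follows essentially the same route as the paper: invert $\phi_\nu(s)=4r$, $t=t_\nu(r)$ to express $s$ (your $\tau$) and $h_\nu$ as uniformly convergent power series in $r^{-1/3}$, substitute into Theorem~\ref{thm:hard2soft}, and collect coefficients, which produces $F_1^P(t)=\tfrac12 F_1(t)-\tfrac{t^2}{6}F'(t)$ and the higher terms exactly as in the paper. The paper is terser about the uniformity of the Taylor remainders over the unbounded range $t_0\leq t\leq r^{1/3}$ — it only makes explicit the observation $(3/2)^{2/3}>1$ used to absorb the exponential error term — while your more careful bookkeeping (polynomial growth of $(\tau-t)^k$ against $O(e^{-2\tau})$ decay with $\tau\geq(2/3)^{1/3}t$) supplies precisely what the paper leaves implicit.
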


\begin{figure}[tbp]
\includegraphics[width=0.325\textwidth]{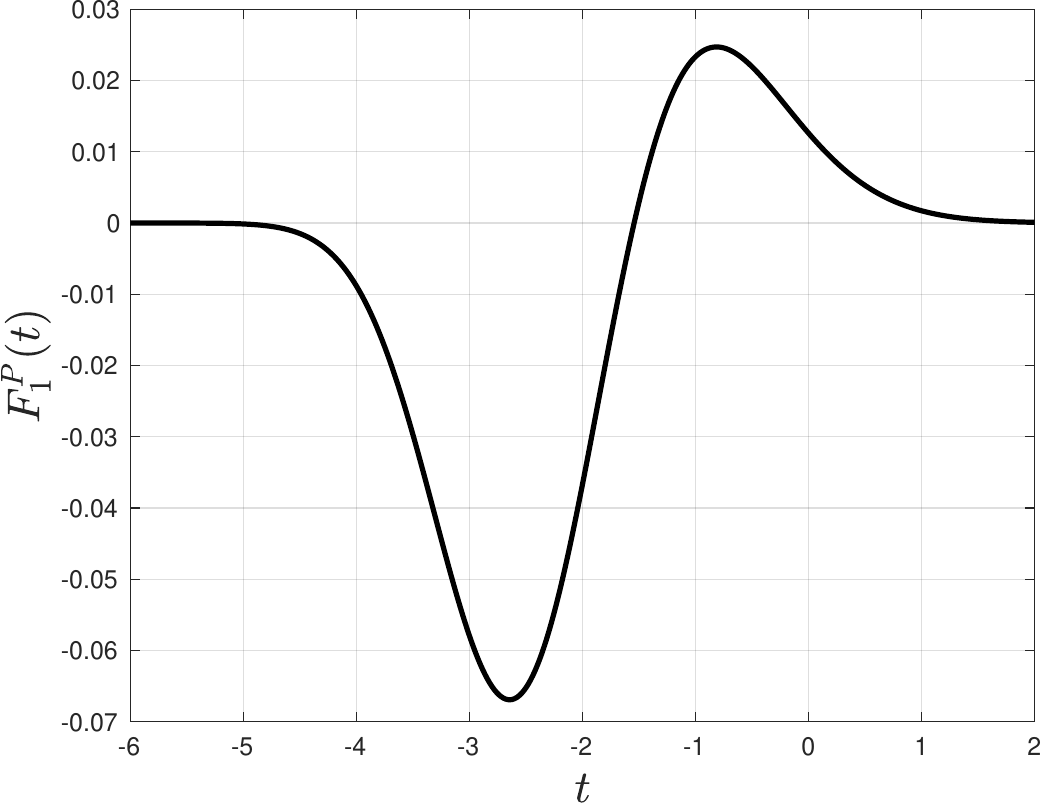}\hfil\,
\includegraphics[width=0.325\textwidth]{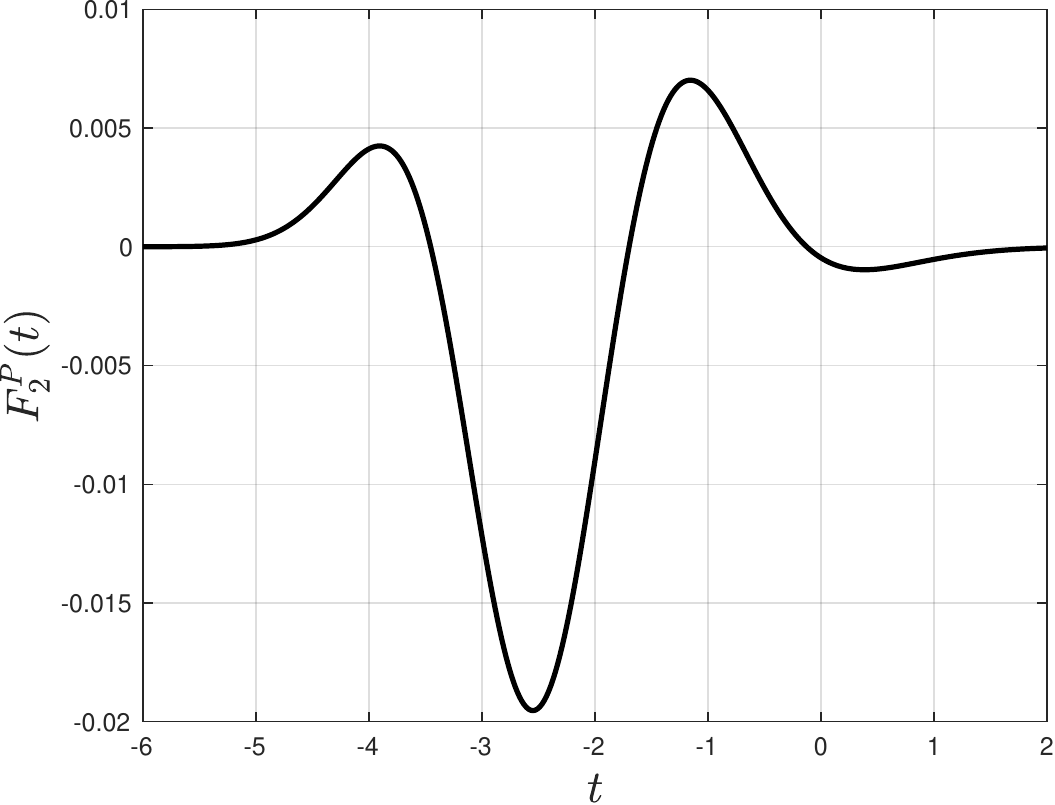}\hfil\,
\includegraphics[width=0.3145\textwidth]{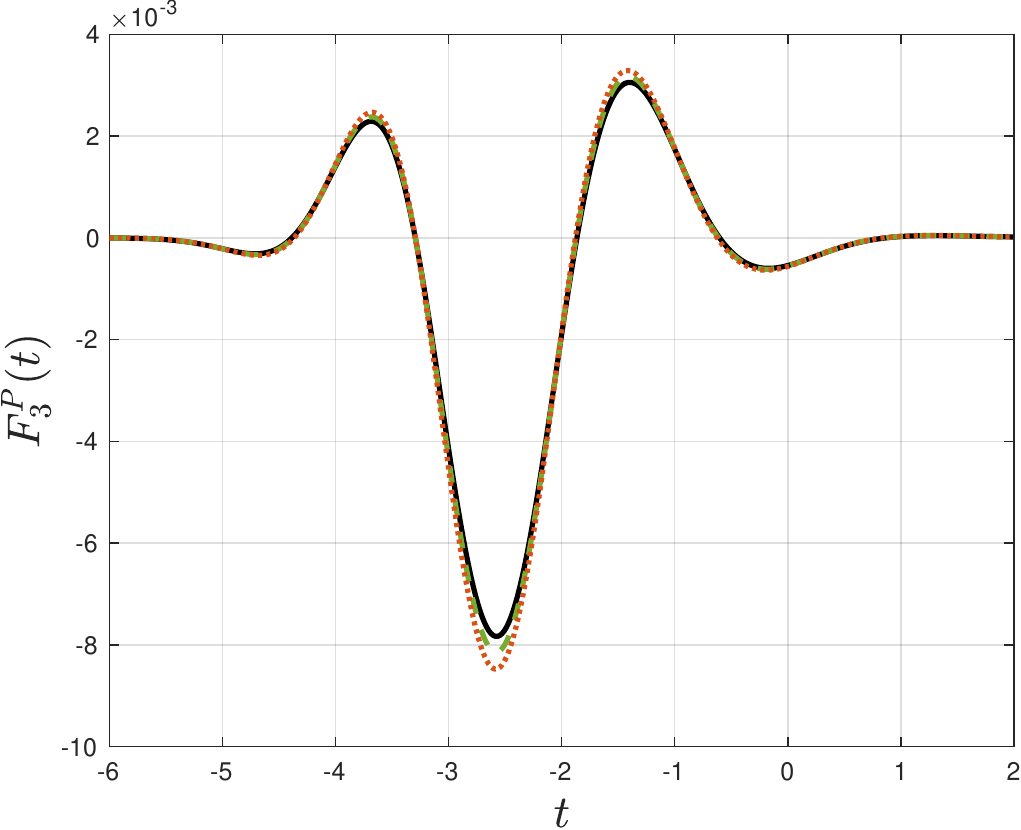}
\caption{{\footnotesize Plots of $F_{1}^P(t)$ (left panel) and $F_{2}^P(t)$ (middle panel) as in (\ref{eq:F22P}a/b).
The right panel shows $F_3^P(t)$ as in (\ref{eq:F22P}c) (black solid line) with the approximations \eqref{eq:F23P} for $r=250$ (red dotted line) and $r=2000$ (green dashed line); the parameter $\nu$ has been varied such that $t_\nu(r)$ covers the range of  $t$ on display. Note that the functions $F_{j}^P(t)$ ($j=1,2,3$) are about two orders of magnitude smaller in scale than their counterparts in Fig.~\ref{fig:hard2soft}.}}
\label{fig:F23P}
\end{figure}

\begin{proof} For $r, \nu > 0$ (i.e., equivalently, $t>-2r^{1/3}$ and $s < h_\nu^{-1}$) the transformations 
\[
4r = \phi_\nu(s), \quad t = t_\nu(r), 
\]
are inverted by the expressions
\begin{equation}\label{eq:sh}
s = \frac{t}{\big(1+\tfrac{t}2 r^{-1/3}\big)^{1/3}},\quad h_\nu = \frac{r^{-1/3}}{2\big(1+\tfrac{t}2  r^{-1/3}\big)^{2/3}}.
\end{equation}
For $t_0\leq t \leq r^{1/3}$ we get
\[
s_0:= (\tfrac{2}{3})^{1/3} t_0 \leq (\tfrac{2}{3})^{1/3} t \leq s < h_\nu^{-1}
\]
and observe that in this range of $t$ the expressions in \eqref{eq:sh} expand as uniformly convergent power series in powers of $r^{-1/3}$, starting with
\[
s = t - \frac{t^2}{6}r^{-1/3} + \frac{t^3}{18}r^{-2/3} -\frac{7 t^4}{324}r^{-1} + \cdots, \quad h_\nu = \frac{1}{2}r^{-1/3} - \frac{t}{6}r^{-2/3} +\frac{5 t^2}{72}r^{-1} + \cdots.
\]
If we plug these uniformly convergent power series into the uniform expansion of Thm.~\eqref{thm:hard2soft}, 
\[
E_2^\text{hard}(4r;\nu) = E_2^\text{hard}(\phi_\nu(s);\nu) = F(s) + \sum_{j=1}^m F_{1}(s) h_\nu^j + h_\nu^{m+1} O(e^{-3s/2}),
\]
we obtain the asserted form of the expansion \eqref{eq:poissonExpandnu} (as well as the claim about the repeated differentiability), simplifying the exponential error term by observing that $(3/2)^{2/3} > 1$. 
In particular, the first three correction terms in \eqref{eq:poissonExpandnu} are thus
\begin{align*}
F_{1}^P(t) &= \frac{1}{2} F_{1}(t) - \frac{t^2}{6} F'(t),\\*[2mm]
F_{2}^P(t) &= \frac{1}{4} F_{2}(t) - \frac{t}{6} F_{1}(t) - \frac{t^2}{12} F_{1}'(t) + \frac{t^3}{18} F'(t) + \frac{t^4}{72}F''(t),\\*[2mm]
F_{3}^P(t) &= \frac{1}{8}F_3(t) -\frac{t}{6} F_2(t)-\frac{t^2}{24}  F_2'(t)+\frac{5t^2}{72}  F_1(t)+\frac{ t^3}{18} F_1'(t)+\frac{t^4}{144}  F_1''(t)\\*[2mm]
&\qquad -\frac{7t^4}{324}  F'(t) -\frac{t^5}{108}  F''(t)-\frac{t^6 }{1296}F'''(t).
\end{align*}
Together with the expressions given in \eqref{eq:F22} this yields the functional form asserted in \eqref{eq:F22P}.
\end{proof}

By \eqref{eq:poisson}, specializing Thm.~\ref{thm:poissonExpandnu} to the case of integer parameter $\nu = l$ yields
the expansion
\begin{equation}\label{eq:poissonExpand}
\prob\big(L_{N_r} \leq l\big) = F(t) + \sum_{j=1}^m F_{j}^P(t)\, r^{-j/3} + r^{-(m+1)/3} \cdot O\big(e^{-t}\big) \bigg|_{t=t_l(r)}
\end{equation} 
which is uniformly valid under the conditions stated there. 

\begin{remark}\label{rem:gauss}
In the literature, scalings are often applied to the probability distribution rather than to the expansion terms. Since $L_{N_r}$ is a an integer-valued random variable, one has to exercise some care with the scaled distribution function being piecewise constant. Namely, for $t\in \R$ being any fixed number, one has
\[
\prob\left(\frac{L_{N_r} - 2\sqrt{r}}{r^{1/6}}\leq t\right) = \prob\big(L_{N_r} \leq l\big),\qquad l = \big\lfloor2\sqrt{r} + tr^{1/6}\big\rfloor,
\]
where $\lfloor\cdot\rfloor$ denotes the Gauss bracket. Thus, by defining
\[
t^{(r)} = \frac{\big\lfloor2\sqrt{r} + tr^{1/6}\big\rfloor - 2\sqrt{r}}{r^{1/6}}
\]
and noting that $t^{(r)}$ stays bounded when $r\to\infty$ while $t$ is fixed, \eqref{eq:poissonExpand} takes the form
\begin{equation}\label{eq:probPoissonExpan}
\prob\left(\frac{L_{N_r} - 2\sqrt{r}}{r^{1/6}}\leq t\right)  = F\big(t^{(r)}\big) + \sum_{j=1}^m F_{j}^P\big(t^{(r)}\big)\, r^{-j/3} + O\big(r^{-(m+1)/3}\big)\qquad (r\to\infty).
\end{equation}
If one chooses to re-introduce the continuous variable $t$ in (parts of) the expansion terms, one has to take into account that
\begin{equation}\label{eq:gaussbracket}
t^{(r)} = t + O(r^{-1/6})\qquad (r\to\infty)
\end{equation}
 where the exponent $-1/6$ in the error term is sharp.
For example, this gives (as previously obtained by Baik and Jenkins \cite[Thm.~1.3]{MR3161478} using the technology of Riemann--Hilbert problems to prove the expansion and Painlevé representations to put $F_{1}^P$ into the simple functional form \eqref{eq:F21P})
\begin{equation}\label{eq:BaikJenkinsOriginal}
\prob\left(\frac{L_{N_r} - 2\sqrt{r}}{r^{1/6}}\leq t\right)  = F\big(t^{(r)}\big) + F_{1}^P(t)\, r^{-1/3} + O(r^{-1/2})\qquad (r\to\infty),
\end{equation}
where the $O(r^{-1/2})$ error term is governed by the Gauss bracket in \eqref{eq:gaussbracket} and cannot be improved upon---completely dominating the order $O(r^{-2/3})$ correction term in \eqref{eq:probPoissonExpan}.
Therefore, claiming an $O(r^{-2/3})$ error term to hold in \eqref{eq:BaikJenkinsOriginal} as stated in \cite[Prop.~1.1]{arxiv.2205.05257} neglects the effect of the Gauss bracket.\footnote{Furthermore, the right panel of \cite[Fig.~3]{arxiv.2205.05257} is not showing an approximation of the $O(r^{-2/3})$ term in \eqref{eq:probPoissonExpan}, let alone in \eqref{eq:BaikJenkinsOriginal}, but instead an approximation of the $O(\nu^{-4/3})$ term in the auxiliary expansion
\[
E_2^\text{hard}\Big(\Big(\nu - t (\tfrac{\nu}{2})^{1/3} + \tfrac{t^2}{6} (\tfrac{\nu}{2})^{-1/3} \Big)^2; \nu\Big) = F(t) + \hat F_{1}(t)\, (\tfrac{\nu}{2})^{-2/3} +  \hat F_{2}(t)\, (\tfrac{\nu}{2})^{-4/3} + O(\nu^{-2}) \quad (\nu\to \infty),
\]
cf. \cite[Eqs.~(2.3/2.33)]{arxiv.2205.05257}. Now, Thm.~\ref{thm:hard2soft} and the formulae in \eqref{eq:F22P} yield the simple relations
\[
\hat F_{1}(t) = F_{1}^P(t),\qquad \hat F_{2}(t) = F_{2}^P(t) + \frac{t}{3}F_{1}^P(t),
\]
which are consistent with \cite[Fig.~3]{arxiv.2205.05257}; the additional term $t F_{1}^P(t)/3$ explains the different shape of $\hat F_{2}(t)$, as displayed in the right panel there, when compared to $F_{2}^P(t)$, as shown in the middle panel of Fig.~\ref{fig:F23P} here.
}
\end{remark}

\part*{Part II: Results Based on the Tameness Hypothesis}

\section{De-Poissonization and the Expansion of the Length Distribution}\label{sect:main}

\subsection{Expansion of the CDF}\label{sect:CDFexpan}

In this section we prove (subject to a tameness hypothesis on the zeros of the generating functions in a sector of the complex plane) an expansion of the CDF $\prob(L_n \leq l)$ of the length distribution near its mode. The general form of such an expansion was conjectured in the recent papers \cite{arxiv.2206.09411,arxiv.2205.05257} where approximations of the graphical form of the first few terms were provided (see \cite[Figs.~4/6]{arxiv.2206.09411} and \cite[Fig.7]{arxiv.2205.05257}). Here, for the first time, we give the functional form of these terms. The underlying tool is analytic de-Poissonization, a technique that was developed in the 1990s in theoretical computer science and analytic combinatorics.

To prepare for the application of analytic de-Poissonization in the form of the Jacquet--Szpankowski Thm.~\ref{thm:jasz}, we consider any fixed compact interval $[t_0,t_1]$ and a sequence of integers $l_n \to \infty$ such that
\begin{equation}\label{eq:tnstar}
t_0 \leq t_n^*:= t_{l_n}(n)\leq t_1 \qquad (n=1,2,3,\ldots).
\end{equation}
When $n-n^{3/5} \leq r \leq n+n^{3/5}$ and $n\geq n_0$ with $n_0$ large enough (depending only on $t_0, t_1$)
 we thus get the uniform bounds\footnote{Observe that 
\[
2\sqrt{r} + t r^{1/6} = 2\sqrt{n} + t n^{1/6} + O(n^{1/10})
\]
uniformly for $t_0\leq t \leq t_1$ and $n-n^{3/5}\leq r \leq n + n^{3/5}$ as $n\to\infty$.}
\[
2\sqrt{r} + (t_0-1) r^{1/6} \leq l_n \leq 2\sqrt{r} + (t_1+1) r^{1/6}.
\]
We write the induced Poisson generating function, and exponential generating function, of the length distribution as
\begin{equation}\label{eq:egf_family}
P_k(z) := P(z; l_k) = e^{-z} \sum_{n=0}^\infty \prob(L_n \leq l_k) \frac{z^n}{n!},\qquad f_k(z):= e^z P_k(z).
\end{equation}
By \eqref{eq:poisson} we have $P_n(r) = E_2^\text{hard}(4r;l_n)$ for real $r>0$, so that Thm.~\ref{thm:poissonExpandnu}
(see also \eqref{eq:poissonExpand}) gives the expansion
\begin{equation}\label{eq:PnExpansion}
P_n(r) =  F(t) + \sum_{j=1}^m F_{j}^P(t) \, r^{-j/3} + O(r^{-(m+1)/3})\,\bigg|_{t=t_{l_n}(r)},
\end{equation}
uniformly valid when $n - n^{3/5} \leq r \leq n + n^{3/5}$ as $n\to\infty$, $m$ being any fixed non-negative integer. Here, the implied constant in the error term depends only on $t_0, t_1$, but not on the specific sequence $l_n$. Preserving uniformity, the expansion can be repeatedly differentiated w.r.t. the variable $r$. In particular, using the differential equation \eqref{eq:tnuprime} we get that $P_n^{(j)}(n)$ expands in powers of $n^{-1/3}$, starting with a leading order term of the form
\begin{subequations}\label{eq:Pnnexpand}
\begin{equation}\label{eq:Pnnlead}
P_n^{(j)}(n)= (-1)^j F^{(j)} (t^*_n) n^{-2j/3} + O(n^{-(2j+1)/3})\qquad (n\to\infty);
\end{equation}
the specific cases to be used below are (see~\eqref{eq:PrPrime} for $P_n'(n)$)
\begin{align}
P_n(n) &= F(t) + F_{1}^P(t) n^{-1/3} + F_{2}^P(t) n^{-2/3} + F_{3}^P(t) n^{-1} + O(n^{-4/3})\,\bigg|_{t=t_n^*},\\*[2mm] 
P_n''(n) &= F''(t) n^{-4/3} + \left(F_{1}^{P}\!\!\phantom{|}''(t) + \frac{5}{6} F'(t) + \frac{t}{3} F''(t) \right) n^{-5/3}\\*[1mm] 
&\hspace*{-0.75cm}+ \left(\frac{3}{2}F_{1}^{P}\!\!\phantom{|}'(t) + \frac{t}{3} F_{1}^{P}\!\!\phantom{|}''(t) + F_{2}^{P}\!\!\phantom{|}''(t)+ \frac{7 t}{36} F'(t)+\frac{t^2}{36}  F''(t) \right) n^{-2}+ O(n^{-7/3})\,\bigg|_{t=t_n^*},\notag\\*[2mm]
P_n'''(n) &= -F'''(t) n^{-2} + O(n^{-7/3}) \,\bigg|_{t=t_n^*},\\*[2mm]
P_n^{(4)}(n) &= F^{(4)}(t) n^{-8/3} + \left(F_{1}^{P}\!\!\phantom{|}^{(4)}(t)+5 F'''(t)+ \frac{2 t}{3} F^{(4)}(t)\right) n^{-3} +  O(n^{-10/3})\,\bigg|_{t=t_n^*},\\*[2mm]
P_n^{(6)}(n) &= F^{(6)}(t) n^{-4} + O(n^{-13/3})\,\bigg|_{t=t_n^*},
\end{align}
\end{subequations}
where the implied constants in the error terms depend only on $t_0, t_1$.

We recall from the results of \cite[Sect.~2]{arxiv.2206.09411} (note the slight differences in notation), and the proofs given there, that the exponential generating functions $f_n(z)$ are entire functions of genus zero having, for each $0<\epsilon<\pi/2$, only finitely many zeros\footnote{Because of $f_n(r)>0$ for $r>0$, the real zeros of $f_n$ are negative and the complex ones are coming in conjugate pairs.} in the sector $|\!\arg z| \leq \pi/2 + \epsilon$. If we denote the real auxiliary functions (cf. Def.~\ref{def:Hayman}) of $f_n(r) = e^r P_n(r)$ by $a_n(r)$ and $b_n(r)$, the expansion \eqref{eq:PnExpansion}, and its derivatives based on \eqref{eq:tnuprime}, give (cf. also \eqref{eq:arbr})
\begin{equation}\label{eq:anbn}
a_n(r) = r + O(r^{1/3}),\qquad b_n(r) = r + O(r^{2/3}),
\end{equation}
uniformly valid when $n - n^{3/5} \leq r \leq n + n^{3/5}$ as $n\to\infty$;  the implied constants in the error terms depend only on $t_0, t_1$.

Analytically, we lack the tools to study the asymptotic distribution of the finitely many zeros of $f_n(z)$ in the sector $|\!\arg z| \leq \pi/2 + \epsilon$ as $n\to\infty$. Numerically, we proceed as follows. The meromorphic logarithmic derivative of $f_n(z)$ takes the form \cite[§3.1]{arxiv.2206.09411}
\[
\frac{f_n'(z)}{f_n(z)} = 1 - \frac{v_{l_n}(z)}{z},
\]
where $v_l$ satisfies a Jimbo--Miwa--Okamoto $\sigma$-form of the Painlevé III equation \cite[Eq.~(31)]{arxiv.2206.09411}, or alternatively, a certain Chazy I equation \cite[Eq.~(34)]{arxiv.2206.09411}. Because of $f_n(0)=1$ the zeros of $f_n$ are in a one-to-one correspondence to the pole field of the meromorphic function $v_{l_n}$. Fornberg and Weideman \cite{MR2804960} developed a numerical method, the {\em pole field solver}, specifically for the task of numerically studying the pole fields of equations of the Painlevé class. They documented results for Painlevé I \cite{MR2804960}, Painlevé II \cite{MR3260257}, its imaginary variant \cite{MR3390079} and, together with Fasondini, for (multivalued) variants of the Painlevé III, V, and VI equations \cite{MR3656730, MR3724928}. 

Now, extensive numerical experiments with the pole field solver applied to $v_{l_n}$ (which will be documented in a separate publication) strongly hint at the property that the zeros of the exponential generating functions $f_n(z)$ in the sectors $|\!\arg z| \leq \pi/2 + \epsilon$ satisfy a uniform tameness condition as in Def.~\ref{def:nonresonant} (see also Remark~\ref{rem:nonresonant}): the zeros are neither coming too close to the positive real axis nor are they getting too large. 
Given this state of affairs, the results on the expansions of the length distribution will be subject to the following:

\subsection*{Tameness hypothesis} For any real $t_0<t_1$ and sequence of integers $l_n\to\infty$ satis\-fy\-ing~\eqref{eq:tnstar} the zeros of the induced family $f_n(z)$ of exponential generating functions \eqref{eq:egf_family} are uniformly tame (see Def.~\ref{def:nonresonant}), with parameters and implied constants only depending on $t_0$ and $t_1$.

\begin{figure}[tbp]
\includegraphics[width=0.325\textwidth]{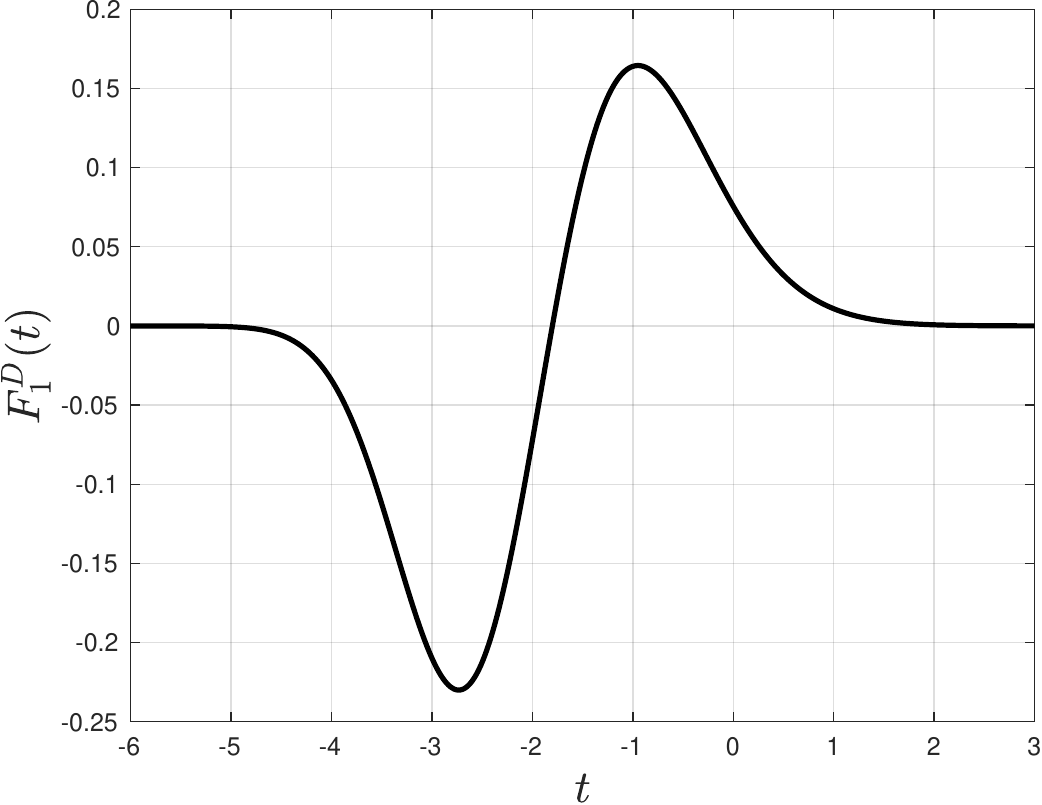}\hfil\,
\includegraphics[width=0.325\textwidth]{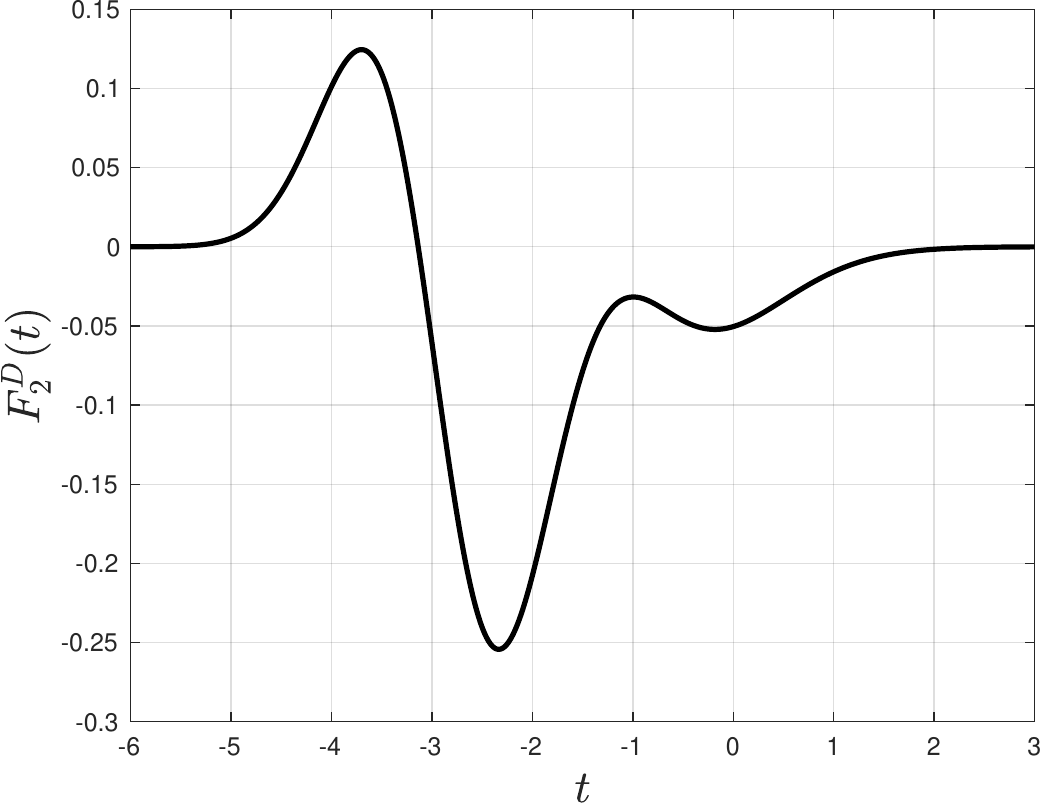}\hfil\,
\includegraphics[width=0.325\textwidth]{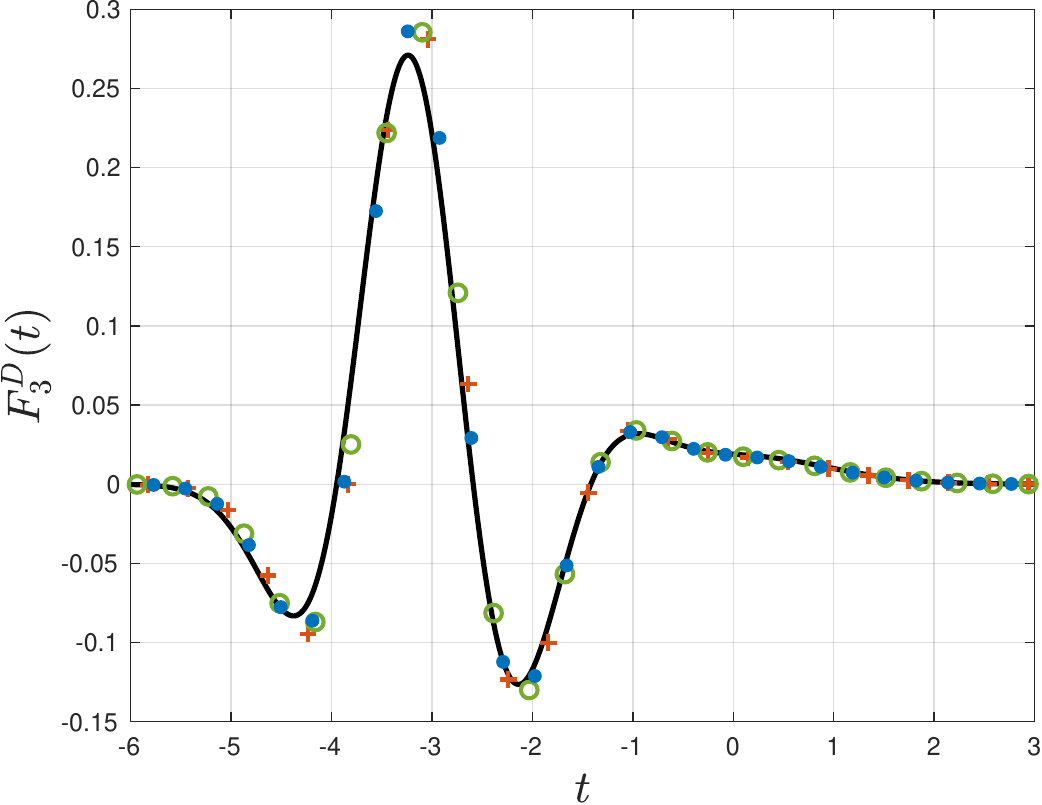}
\caption{{\footnotesize Plots of $F_{1}^D(t)$ (left panel), $F_{2}^D(t)$ (middle panel) as in \eqref{eq:F22D}; both agree with the numerical prediction of their graphical form given in the left panels of \cite[Figs.~4/6]{arxiv.2206.09411}.
The right panel shows $F_3^D(t)$ as in \eqref{eq:F22D} (black solid line) with the approximations \eqref{eq:F23D} for $n=250$ (red $+$), $n=500$ (green $\circ$) and  $n=1000$ (blue $\bullet$); the integer $l$ has been varied such that $t_l(n)$ spreads over the range of $t$ displayed here. Evaluation of~\eqref{eq:F23D} uses the table of exact values of $\prob(L_n\leq l)$ up to $n=1000$ that was compiled in \cite{arxiv.2206.09411}.}}
\label{fig:F23D}
\end{figure}

\begin{theorem}\label{thm:lengthdistexpan} Let $t_0<t_1$ be any real numbers and
assume the tameness hypothesis. 
Then there holds the expansion
\begin{equation}\label{eq:distExpand}
\prob(L_n \leq l) = F(t) + \sum_{j=1}^m F_{j}^D(t)\, n^{-j/3} + O(n^{-(m+1)/3})\,\bigg|_{t=t_l(n)},
\end{equation}
which is uniformly valid when $n,l \to\infty$ subject to $t_0 \leq t_l(n) \leq t_1$ with $m$ being any fixed non-negative integer. Here the $F_{j}^D$ are certain smooth functions{\rm;} the first three are\/\footnote{To validate the expansion \eqref{eq:distExpand} and the formulae (\ref{eq:F22D}a--c), Fig.~\ref{fig:F23D} plots $F_3^D(t)$ next to the approximation
\begin{equation}\label{eq:F23D}
F_{3}^D\big(t_l(n)\big) \approx n \cdot \Big( \prob(L_n \leq l) - F(t) - F_{1}^D(t)n^{-1/3} - F_{2}^D(t) n^{-2/3}\Big)\,\bigg|_{t=t_l(n)}
\end{equation}
for $n=250$, $n=500$ and $n=1000$, varying the integer $l$ in such a way that $t=t_l(n)$ spreads over $[-6,3]$.}
\begin{subequations}\label{eq:F22D}
\begin{align}
F_{1}^D(t) &= -\frac{t^2}{60} F'(t) - \frac{3}{5} F''(t),\label{eq:F21D}\\*[1mm]
F_{2}^D(t) &= \Big(-\frac{139}{350} + \frac{2t^3}{1575}\Big) F'(t) + \Big(-\frac{43t}{350} + \frac{t^4}{7200}\Big) F''(t) + \frac{t^2}{100} F'''(t) + \frac{9}{50} F^{(4)}(t),\\*[2mm]
F_3^D(t) &= 
-\Big(\frac{562 t }{7875} +\frac{41t^4}{283500}\Big) F'(t)
+\Big(\frac{t^2}{300}-\frac{t^5}{47250}\Big) F''(t)\\*[1mm]
&\qquad+\Big(\frac{5137}{15750}+\frac{9 t^3}{7000}-\frac{t^6}{1296000}\Big) F'''(t)
+\Big(\frac{129 t}{1750}-\frac{t^4}{12000}\Big) F^{(4)}(t)\notag\\*[1mm]
&\qquad-\frac{3 t^2}{1000}F^{(5)}(t)
-\frac{9}{250}F^{(6)}(t).\notag
\end{align}
\end{subequations}
\end{theorem}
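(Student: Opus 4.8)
The plan is to de-Poissonize the expansion \eqref{eq:PnExpansion} of $P_n(r)=\prob(L_{N_r}\le l_n)$ at $r=n$ by means of the Jacquet--Szpankowski analytic de-Poissonization theorem (Thm.~\ref{thm:jasz}): once its hypotheses are met, it produces the Jasz expansion
\[
\prob(L_n\le l_n)=\sum_{j=0}^{J} b_j(n)\,P_n^{(j)}(n)+O\big(n^{-(m+1)/3}\big)
\]
for any sufficiently large fixed $J=J(m)$, where $b_0\equiv 1$, $b_1\equiv 0$, $b_j(n)$ is the $j$-th diagonal Poisson--Charlier polynomial with $\deg b_j\le\lfloor j/2\rfloor$, and in particular $b_2(n)=-n/2$ and $b_4(n)=n^2/8+O(n)$. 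First I would reduce the asserted uniform statement over all pairs $(n,l)$ with $t_0\le t_l(n)\le t_1$ to the case of a single sequence of integers $l_n\to\infty$ obeying \eqref{eq:tnstar}: were the uniform bound to fail, one could extract a sequence witnessing the failure and complete it to a sequence of the form \eqref{eq:tnstar}, contradicting the single-sequence result. Thus we work with the family $\{P_n\}$ and its exponential generating functions $\{f_n\}$ as set up in Sect.~\ref{sect:CDFexpan}.

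The crux is to verify the Tauberian growth conditions of Thm.~\ref{thm:jasz} \emph{uniformly} in $n$. Here I would use the structural facts recalled in Sect.~\ref{sect:CDFexpan}: $f_n(z)=e^z P_n(z)$ is entire of genus zero and has, for each $\epsilon\in(0,\pi/2)$, only finitely many zeros in the sector $|\arg z|\le\pi/2+\epsilon$, none of them real. Factoring off those finitely many zeros as a polynomial leaves a genus-zero function with no zeros in the sector, to which the universal bound of Thm.~\ref{thm:genuszero} applies; it furnishes both the ``outside-the-cone'' sub-exponential bound and the ``inside-the-cone'' polynomial bound on $P_n=e^{-z}f_n$ that Thm.~\ref{thm:jasz} demands (the near-Gaussian behaviour of $f_n$ along rays close to $\R_+$ being what keeps $P_n$ polynomially bounded in a genuine cone), while the estimates \eqref{eq:anbn} on the auxiliary functions $a_n$, $b_n$ place the saddle at $z\approx n$ with the correct curvature. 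The role of the tameness hypothesis is exactly to ensure, via Corollary~\ref{cor:nonresonant}, that the $n$-dependent polynomial factor does not destroy uniformity: its zeros stay away from the positive real axis and do not grow too fast. I expect this verification to be the main obstacle of the proof --- it is precisely where Appendices~\ref{sect:jasz} and \ref{sect:hayman} and the tameness hypothesis are consumed; the window $[n-n^{3/5},\,n+n^{3/5}]$ on which \eqref{eq:PnExpansion} and its $r$-derivatives are available is the one the saddle-point analysis of Thm.~\ref{thm:jasz} requires.

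Granting the Jasz expansion, the rest is bookkeeping. I would substitute \eqref{eq:Pnnexpand}, whose leading term is $P_n^{(j)}(n)=(-1)^j F^{(j)}(t_n^*)\,n^{-2j/3}+O(n^{-(2j+1)/3})$ by \eqref{eq:Pnnlead}, into the Jasz sum. Since $b_j(n)P_n^{(j)}(n)=O\big(n^{\lfloor j/2\rfloor-2j/3}\big)$ with $\lfloor j/2\rfloor-2j/3\le -j/6$, only finitely many $j$ feed any prescribed order $n^{-m/3}$; the orders $n^{-1/3}$ and $n^{-2/3}$ receive contributions only from $j\in\{0,2\}$ and $j\in\{0,2,4\}$ respectively, so that $F_1^D=F_1^P-\tfrac12 F''$ and $F_2^D=F_2^P-\tfrac12\big((F_1^P)''+\tfrac56 F'+\tfrac t3 F''\big)+\tfrac18 F^{(4)}$, which on inserting \eqref{eq:F22P} collapse to exactly \eqref{eq:F22D}; carrying a few more terms of the Jasz sum (now involving $b_3,b_5,b_6$ and the subleading coefficients of $P_n''(n),P_n^{(4)}(n)$) yields $F_3^D$ as in \eqref{eq:F3D}. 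Two points keep the bookkeeping clean: the argument $t_n^*=t_{l_n}(n)=t_l(n)$ appearing in \eqref{eq:Pnnexpand} is already the variable of \eqref{eq:distExpand}, so no Gauss bracket and no Taylor re-expansion of $F$ is introduced; and the smoothness of the $F_j^D$, together with the stated error term, are inherited directly from Thm.~\ref{thm:poissonExpandnu} and Thm.~\ref{thm:jasz}. Collecting the powers of $n^{-1/3}$ then gives \eqref{eq:distExpand}, completing the proof.
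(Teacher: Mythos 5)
Your proposal is correct and follows essentially the same route as the paper: reduce to a single sequence $l_n$ obeying \eqref{eq:tnstar}, invoke Corollary~\ref{cor:nonresonant} via the tameness hypothesis to verify conditions~(I) and~(O) of Theorem~\ref{thm:jasz}, and then substitute \eqref{eq:Pnnexpand} into the Jasz expansion (Example~\ref{ex:jaszP4}), recovering exactly the relations $F_1^D=F_1^P-\tfrac12 F''$ and $F_2^D=F_2^P-\tfrac12(F_1^P)''-\tfrac{5}{12}F'-\tfrac t6 F''+\tfrac18 F^{(4)}$ used in the paper.
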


\begin{proof} Following up the preparations preceding the formulation of the theorem, the tameness hypothesis allows us to apply Corollary~\ref{cor:nonresonant}, bounding $f_n(z)=e^z P_n(z)$ by
\begin{equation}\label{eq:cases}
\big| f_n(re^{i\theta}) \big| \leq 
\begin{cases}
2f_n(r) e^{-\frac{1}{2}\theta^2 r}, &\qquad 0\leq |\theta| \leq r^{-2/5},\\*[2mm]
2f_n(r) e^{-\frac{1}{2}r^{1/5}}, &\qquad r^{-2/5}\leq |\theta| \leq \pi,
\end{cases}
\end{equation}
for $n - n^{3/5} \leq r \leq n + n^{3/5}$ and $n\geq n_0$ when $n_0$ is sufficiently large (depending on $t_0$, $t_1$). Using the trivial bounds (for $r>0$ and $|\theta|\leq \pi$) 
\[
0\leq f_n(r)\leq e^r, \qquad 0\leq  P_n(r) \leq 1, \qquad 1 - \tfrac{1}{2}\theta^2 \leq \cos\theta,
\]
the first case in \eqref{eq:cases} can be recast in form of the bound
\[
\big|P_n(re^{i\theta})\big| \leq 2 P_n(r) e^{r \left(1-\cos\theta - \tfrac12 \theta^2\right)} \leq 2,
\]
which proves condition (I) of Thm.~\ref{thm:jasz} with $B=2$, $D=1$, $\beta=0$ and $\delta=2/5$; whereas the second
case implies 
\[
|f_n(n e^{i\theta})| \leq 2 f_n(n) e^{-\tfrac12 n^{1/5}} \leq 2 \exp\big(n - \tfrac12 n^{1/5}\big),
\]
which proves condition (O) of Thm.~\ref{thm:jasz} with $A=1/2$, $C=0$, $\alpha=1/5$ and $\gamma=0$.
Hence, there holds the Jasz expansion \eqref{eq:jasz}, namely
\[
\prob(L_n \leq l_n) = P_n(n) + \sum_{j=2}^M b_j(n) P_n^{(j)}(n) + O(n^{-(M+1)/5})
\]
for any $M=0,1,2,\ldots$ as $n\geq n_1$; here $n_1$ and the implied constant depend on $t_0$, $t_1$.
By noting that the diagonal Poisson--Charlier polynomials $b_j$ have degree $\leq \lfloor j/2\rfloor$ and by choosing $M$ large enough, the expansions \eqref{eq:Pnnexpand} of $P_n^{(j)}(n)$ in terms of powers of $n^{-1/3}$  yield that there are smooth functions $F_{j}^D$ such that
\[
\prob(L_n \leq l_n) = F(t) + \sum_{j=1}^m F_{j}^D(t)\, n^{-j/3} + O(n^{-(m+1)/3})\,\bigg|_{t=t_n^*}
\]
as $n\to\infty$; $m$ being any fixed non-negative integer. Given the uniformity of the bound for fixed $t_0$ and $t_1$, we can replace $l_n$ by $l$ and $t_n^*$ by $t_l(n)$ as long as we respect $t_0 \leq t_l(n) \leq t_1$. This finishes the proof of \eqref{eq:distExpand}.

The first three functions $F_{1}^D$, $F_{2}^D(t)$, $F_{3}^D(t)$ can be determined using the particular case~\eqref{eq:jaszP4} of the Jasz expansion from Example~\ref{ex:jaszP4} (which applies here because of \eqref{eq:Pnnlead}), namely
\[
\prob(L_n \leq l_n) =P_n(n) - \frac{n}{2} P_n''(n) + \frac{n}{3} P_n'''(n) + \frac{n^2}{8} P_n^{(4)}(n) - \frac{n^3}{48}P_n^{(6)}(n) + O(n^{-4/3}).
\]
Inserting the formulae displayed in \eqref{eq:Pnnexpand} we thus obtain
\begin{subequations}\label{eq:F2FPrel}
\begin{align}
F_{1}^D(t) &= F_{1}^P(t) - \frac{1}{2} F''(t),\\*[2mm]
F_{2}^D(t) &= F_{2}^P(t) - \frac{1}{2} F_{1}^{P}\!\!\phantom{|}''(t) - \frac{5}{12}F'(t) - \frac{t}{6}F''(t) + \frac{1}{8} F^{(4)}(t),\\*[2mm]
F_3^D(t) &= F_3^P(t)-\frac{1}{2} F_{2}^{P}\!\!\phantom{|}''(t)-\frac{3 }{4} F_{1}^{P}\!\!\phantom{|}'(t)-\frac{t}{6} F_{1}^{P}\!\!\phantom{|}''(t)+\frac{1}{8} F_{1}^{P}\!\!\phantom{|}^{(4)}(t)\\*[1mm]
&\qquad -\frac{7t}{72} F'(t)-\frac{t^2}{72}  F''(t)+\frac{7}{24} F^{(3)}(t)+\frac{t}{12} F^{(4)}(t)-\frac{1}{48} F^{(6)}(t).\notag
\end{align}
\end{subequations}
Together with the expressions given in \eqref{eq:F22P} this yields the functional form asserted in \eqref{eq:F22D}.
\end{proof}

\begin{figure}[tbp]
\includegraphics[width=0.325\textwidth]{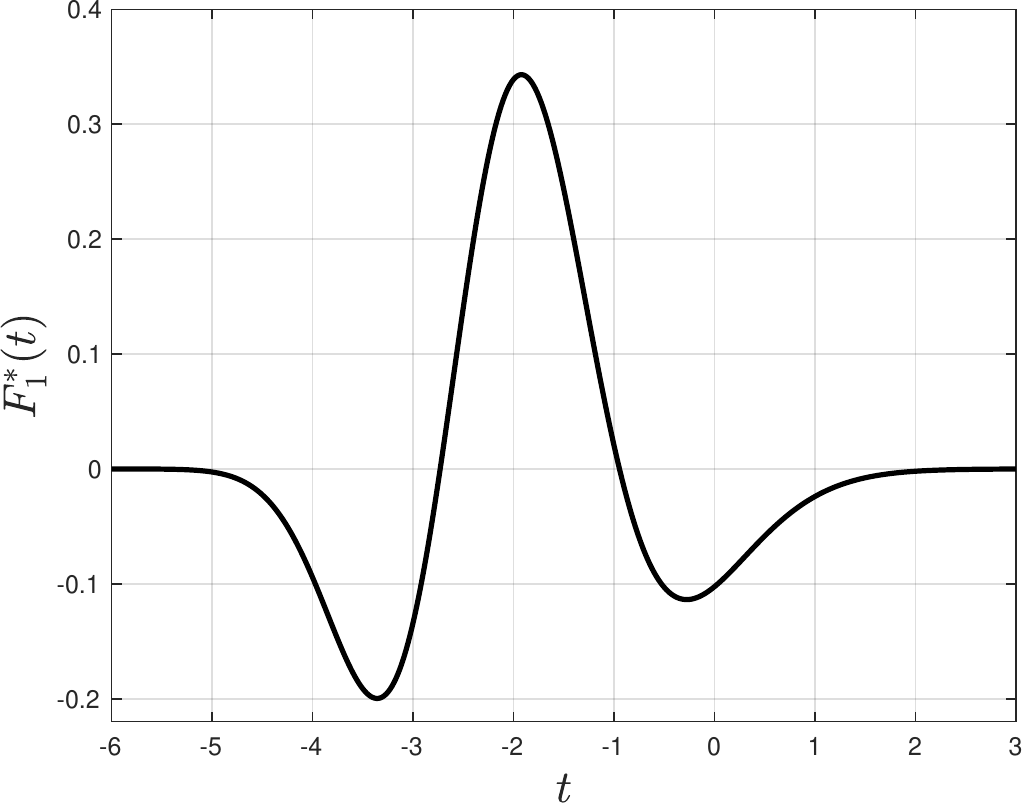}\hfil\,
\includegraphics[width=0.325\textwidth]{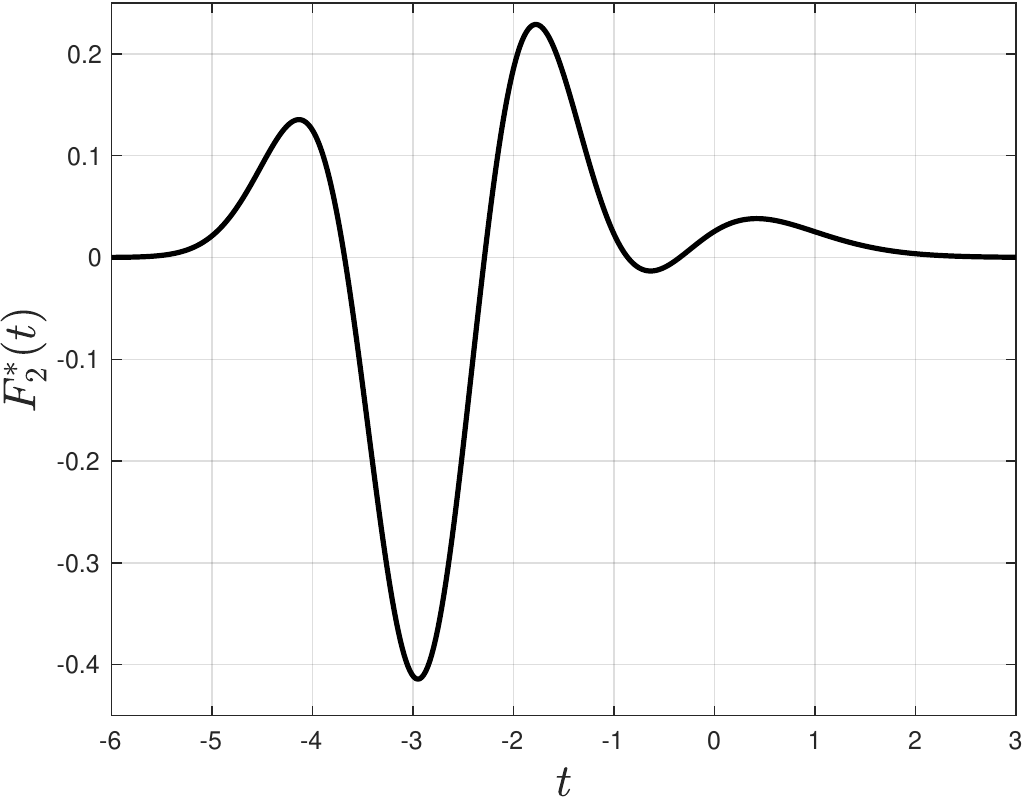}\hfil\,
\includegraphics[width=0.325\textwidth]{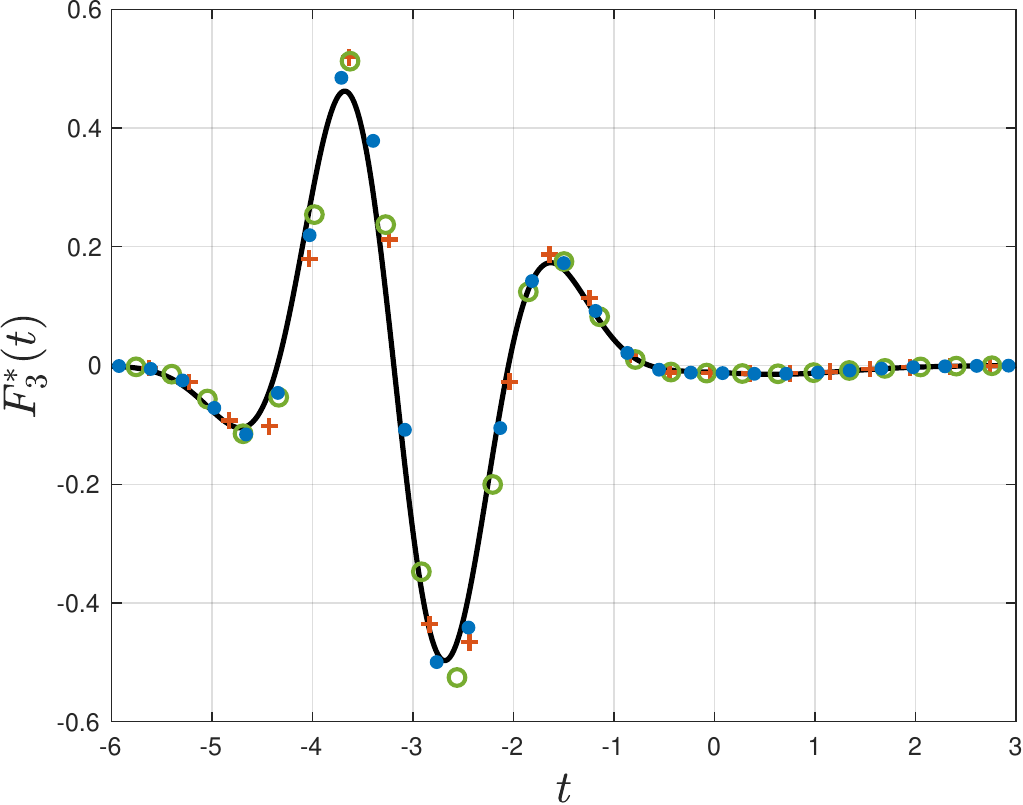}
\caption{{\footnotesize Plots of $F_{1}^*(t)$ (left panel) and $F_{2}^*(t)$ (middle panel) as in (\ref{eq:F22star}a/b); both agree with the numerical prediction of their graphical form given in the right panels of \cite[Figs.~4/6]{arxiv.2206.09411}.
The right panel shows $F_3^*$ as in (\ref{eq:F22star}c) (black solid line) with the approximations \eqref{eq:F23star} for $n=250$ (red $+$), $n=500$ (green $\circ$) and  $n=1000$ (blue $\bullet$); the integer $l$ has been varied such that $t_{l-1/2}(n)$ spreads over the range of $t$ displayed here. Evaluation of~\eqref{eq:F23star} uses the table of exact values of $\prob(L_n= l)$ up to $n=1000$ that was compiled in \cite{arxiv.2206.09411}.}}
\label{fig:F23star}
\end{figure}

\subsection{Expansion of the PDF}

Subject to its assumptions, Thm.~\ref{thm:lengthdistexpan} implies for the PDF of the length distribution that
\begin{multline*}
\prob(L_n=l) = \prob(L_n \leq l) - \prob(L_n \leq l-1) \\*[1mm]
= \big(F(t_l(n)) - F(t_{l-1}(n))\big) + \sum_{j=1}^m \big(F_{j}^D(t_l(n))-F_{j}^D(t_{l-1}(n))\big)\, n^{-j/3} + O(n^{-(m+1)/3}).
\end{multline*}
Applying the central differencing formula (which is, basically, just a Taylor expansion for smooth $G$ centered at the midpoint)
\[
G(t+h) - G(t) = h G'\big(t+\tfrac{h}{2} \big) + \frac{h^3}{24}G'''\big(t+\tfrac{h}{2} \big) +  \frac{h^5}{1920}G^{(5)} \big(t+\tfrac{h}{2} \big)  +\frac{h^7 }{322560} G^{(7)} \big(t+\tfrac{h}{2} \big)+ \cdots ,
\]
with increment $h=n^{-1/6}$, we immediately get the following corollary of Thm.~\ref{thm:lengthdistexpan}.

\begin{corollary}\label{cor:PDFexpan} Let $t_0<t_1$ be any real numbers and
assume the tameness hypothesis. Then there holds the expansion
\begin{equation}\label{eq:PDFexpan}
n^{1/6}\, \prob(L_n=l) =  F' (t) + \sum_{j=1}^m F_{j}^*(t) n^{-j/3} + O(n^{-(m+1)/3})\,\bigg|_{t=t_{l-1/2}(n)},
\end{equation}
which is uniformly valid when $n,l \to\infty$ subject to the constraint $t_0 \leq t_{l-1/2}(n) \leq t_1$ with $m$ being any fixed non-negative integer. Here the $F_{j}^*$ are certain smooth functions{\rm;} in particular\/\footnote{To validate the expansion \eqref{eq:PDFexpan} and the formulae (\ref{eq:F22star}a--c), Fig.~\ref{fig:F23star} plots $F_3^*(t)$ next to the approximation
\begin{equation}\label{eq:F23star}
F_{3}^*\big(t_{l-1/2}(n)\big) \approx n^{7/6} \cdot \Big( \prob(L_n = l) -  F'(t) n^{-1/6} - F_{1}^*(t)n^{-1/2} - F_{2}^*(t) n^{-5/6}\Big) \,\bigg|_{t=t_{l-1/2}(n)}
\end{equation}
for $n=250$, $n=500$ and $n=1000$, varying the integer $l$ in such a way that $t=t_{l-1/2}(n)$ spreads over $[-6,3]$.}%
\begin{subequations}\label{eq:F22star}
\begin{align}
F_{1}^*(t) &= -\frac{t}{30} F'(t) - \frac{t^2}{60}F''(t) - \frac{67}{120} F'''(t), \label{eq:F21star} \\*[2mm]
F_{2}^*(t) &= \frac{2t^2}{525}F'(t) + \Big(-\frac{629}{1200}+\frac{23t^3}{12600}\Big)F''(t) + \Big(-\frac{899t}{8400}+\frac{t^4}{7200}\Big)F'''(t) \\*[1mm]
&\qquad\quad + \frac{67t^2}{7200}F^{(4)}(t) + \frac{1493}{9600}F^{(5)}(t), \notag
\intertext{}\notag\\*[-14mm]
F_3^*(t) &=
-\Big(\frac{373}{5250} + \frac{41 t^3}{70875} \Big) F'(t)
-\Big(\frac{1781 t}{28000} + \frac{71 t^4 }{283500}\Big) F''(t)\\*[1mm]
&\qquad+\Big(\frac{63 t^2}{8000}-\frac{13 t^5}{504000}\Big) F'''(t) 
+\Big(\frac{41473 }{112000}+\frac{13 t^3}{12096} -\frac{t^6}{1296000} \Big) F^{(4)}(t)\notag\\*[1mm]
&\qquad+\Big(\frac{131057 t}{2016000} - \frac{67 t^4}{864000}\Big) F^{(5)}(t)
-\frac{1493 t^2}{576000} F^{(6)}(t)
-\frac{232319}{8064000}F^{(7)}(t).\notag
\end{align}
\end{subequations}
\end{corollary}

\begin{figure}[tbp]
\includegraphics[width=0.45\textwidth]{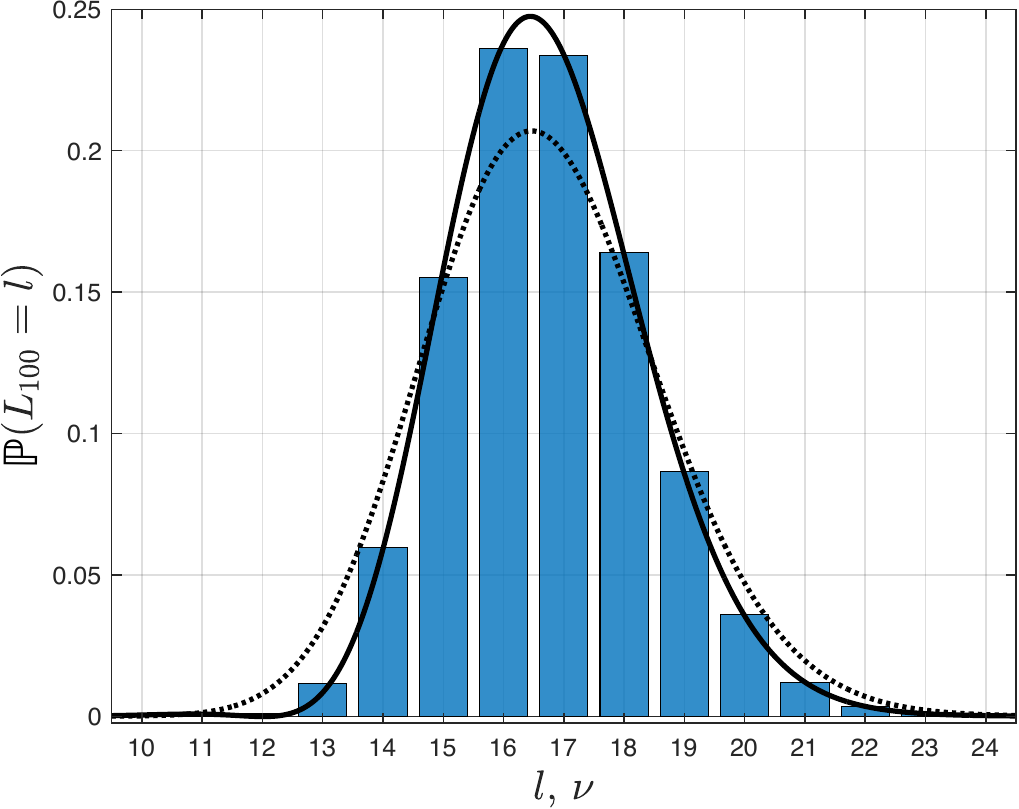}\hfil\,
\includegraphics[width=0.45\textwidth]{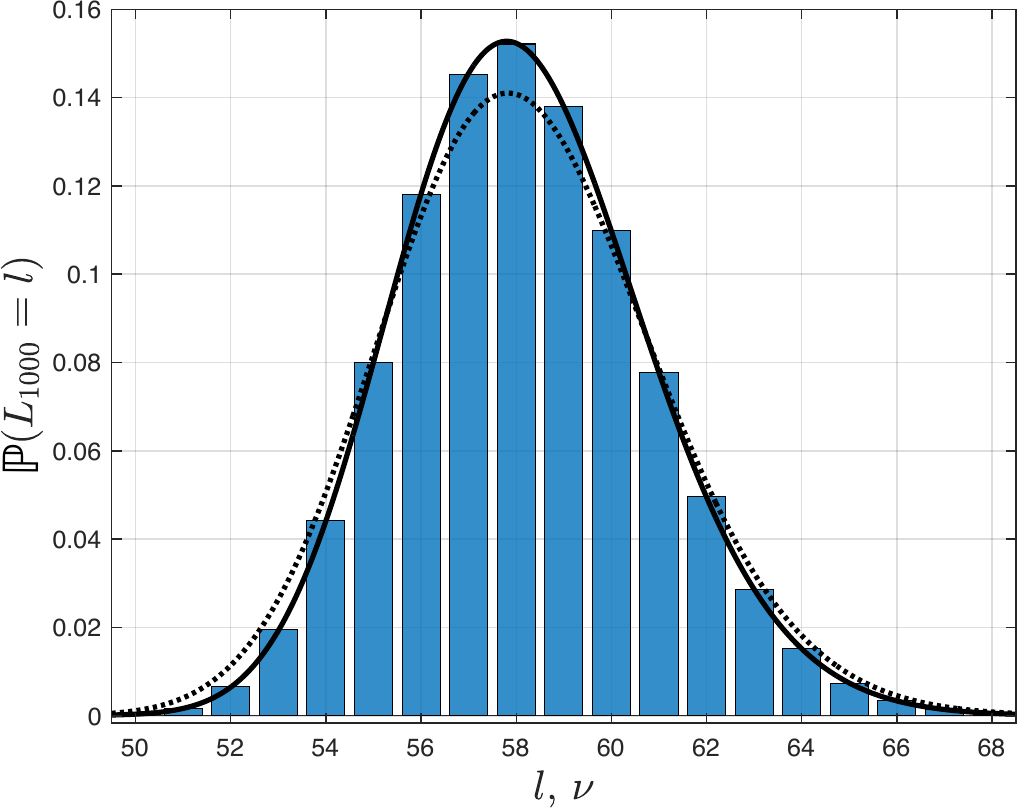}
\caption{{\footnotesize The exact discrete length distribution $\prob(L_{n}=l)$ (blue bars centered at the integers $l$) vs. the asymptotic expansion \eqref{eq:PDFexpan} for $m=0$ (the Baik--Deift--Johansson limit, dotted line) and for $m=2$ (the limit with the first two finite-size correction terms added, solid line). Left: $n=100$; right: $n=1000$. The expansions are displayed as functions of the continuous variable $\nu$, evaluating the right-hand-side of \eqref{eq:PDFexpan} in $t=t_{\nu-1/2}(n)$. The exact values are from the table compiled in \cite{arxiv.2206.09411}. Note that a graphically accurate continuous approximation of the discrete distribution must intersect the bars right in the middle of their top sides: this is, indeed, the case for $m=2$ (except at the left tail for $n=100$). In contrast, the uncorrected limit law ($m=0$) is noticeable inaccurate for this range of $n$.}}
\label{fig:PDF}
\end{figure}

\begin{remark}
The case $m=0$ of Corollary~\ref{cor:PDFexpan} gives
\[
n^{1/6} \, \prob(L_n=l) = F'(t_{l-1/2}(n)) + O(n^{-1/3}),
\]
where the exponent in the error term cannot be improved. By noting 
\[
t_{l-1/2}(n) = t_l(n) - \tfrac{1}{2}n^{-1/6}
\]
we understand that, for fixed large $n$, visualizing the discrete length distribution near its mode by plotting the points 
\[
\big(t_l(n),n^{1/6}\,\prob(L_n=l)\big)
\] 
next to the graph $(t,F'(t))$ introduces a perceivable bias: namely, all points are shifted by an amount of $n^{-1/6}/2$ to the right of the graph. 
Exactly such a bias can be observed in the first ever published plot of the PDF vs. the density of the Tracy--Widom distribution by Odlyzko and Rains in \cite[Fig.~1]{MR1771285}: the Monte-Carlo data for $n=10^6$  display  a consistent shift by $0.05$.

A bias free plot is shown in Fig.~\ref{fig:PDF}, which in addition displays an improvement of the error of the limit law by a factor of $O(n^{-2/3})$ that is obtained by adding the first two finite-size correction terms.
\end{remark}

\section{Expansions of Stirling-Type Formulae}\label{sect:stirling}

In our work \cite{arxiv.2206.09411} we advocated the use of a Stirling-type formula to approximate the length distribution for larger $n$ (because of being much more efficient and accurate than Monte-Carlo simulations). To recall some of our findings there, let us denote the exponential generating function  and its Poisson counterpart simply by
\[
f(z) = \sum_{n=0}^\infty \prob(L_n\leq l) \frac{z^n}{n!},\qquad P(z) = e^{-z} f(z), 
\] 
suppressing the dependence on the integer parameter $l$ from the notation for the sake of brevity. It was shown in \cite[Thm.~2.2]{arxiv.2206.09411} that the entire function $f$ is $H$-admissible so that there is the normal approximation
(see Def.~\ref{def:Hayman} and Thm.~\ref{thm:hayman})
\begin{equation}\label{eq:normal}
 \prob(L_n\leq l) = \frac{n! f(r)}{r^n \sqrt{2\pi b(r)}} \left(\exp\left(-\frac{(n-a(r))^2}{2b(r)}\right)+ o(1)\right) \qquad (r\to \infty)
 \end{equation}
uniformly in $n=0,1,2,\ldots$ while $l$ is any {\em fixed} integer. Here, $a(r)$ and $b(r)$ are the real auxiliary functions
\[
a(r) = r \frac{f'(r)}{f(r)},\qquad b(r) = r a'(r).
\]
We consider the two cases $r=r_n$, $a(r_n)=n$ and $r=n$. After dividing \eqref{eq:normal} by the 
classical Stirling factor (which does not change anything of substance; see Remark~\ref{rem:vanillaStirling})
\begin{equation}\label{eq:tau}
\tau_n := \frac{n!}{\sqrt{2\pi n}} \left(\frac{e}{n}\right)^n \sim 1+\frac{n^{-1}}{12} + \frac{n^{-2}}{288} + \cdots,
\end{equation}
we get after some re-arranging of terms the {\em Stirling-type formula} $S_{n,l}$ ($r=r_n$) and the {\em simplified Stirling-type formula} $\tilde S_{n,l}$ ($r=n$):
\begin{subequations}\label{eq:Stirling}
\begin{align}
S_{n,l} &:= \frac{P(r_n)}{\sqrt{b(r_n)/n}} \exp\left(n\,\Lambda\!\left(\frac{r_n-n}n\right)\right),\quad \Lambda(h) = h - \log(1+h),\label{eq:Snl}\\*[2mm]
\tilde S_{n,l} &:= \frac{P(n)}{\sqrt{b(n)/n}} \exp\left(-\frac{(n-a(n))^2}{2b(n)}\right).\label{eq:Snltilde}
\end{align}
\end{subequations}
As shown in \cite{arxiv.2206.09411}, both approximations are amenable for a straightforward numerical evaluation using the tools developed in \cite{MR2895091, MR2600548,MR3647807}. For fixed $l$, the normal approximation \eqref{eq:normal} implies
\[
\prob(L_n\leq l) = S_{n,l} \cdot (1 + o(1))\qquad (n\to\infty);
\]
but numerical experiments reported in \cite[Fig.~3, Eq.~(8b)]{arxiv.2206.09411} suggest that there holds
\[
\prob(L_n\leq l) = S_{n,l} + O(n^{-2/3})
\]
{\em uniformly} when $n,l\to\infty$ while $t_l(n)$ stays bounded. Subject to the tameness hypothesis of Thm.~\ref{thm:lengthdistexpan} we prove this observation as well as its counterpart for the simplified Stirling-type formula, thereby unveiling the functional form of the error term $O(n^{-2/3})$:\/\footnote{Note that the expansions \eqref{eq:SnlTilde2} for $\tilde S_{n,l}$ and \eqref{eq:Snl2} for $S_{n,l}$ given in the proof do not require the tameness hypothesis. It is only required to facilitate the comparison with the result of Thm.~\ref{thm:lengthdistexpan}, which then yields \eqref{eq:StirlingExpand}.}

\begin{theorem}\label{thm:stirling}
Let $t_0<t_1$ be any real numbers and
assume the {tameness hypothesis}. Then, for the Stirling-type formula $S_{n,l}$ and its simplification $\tilde S_{n,l}$, there hold the expansions (note that both are starting at $j=2$)
\begin{subequations}\label{eq:StirlingExpand}
\begin{align}
\prob(L_n \leq l) &= S_{n,l} + \sum_{j=2}^m F_{j}^S\big(t_l(n)\big)\, n^{-j/3} + O(n^{-(m+1)/3}),\label{eq:SExpan}\\*[1mm]
\prob(L_n \leq l) &= \tilde S_{n,l} + \sum_{j=2}^m \tilde F_{j}^S\big(t_l(n)\big)\, n^{-j/3} + O(n^{-(m+1)/3}),\label{eq:StildeExpan}
\end{align}
\end{subequations}
which are uniformly valid when $n,l \to\infty$ subject to $t_0 \leq t_l(n) \leq t_1$ with $m$ being any fixed non-negative integer. Here the $F_{j}^S$ and $\tilde F_{j}^S$ are certain smooth functions{\rm;} the first being\/\footnote{The functional form of the terms $F_{2}^S$, $\tilde F_{2}^S$ differs significantly from the one of corresponding terms in the previous theorems. Though they still share the form  
\[
F(t) \cdot \big(\text{rational polynomial in $u_{00}(t), u_{10}(t), u_{11}(t), u_{20}(t), u_{21}(t), u_{30}(t)$}\big),
\]
using the algorithmic ideas underlying the tabulation  of $F(t)\cdot u_{jk}(t)$ $(0\leq j+k \leq 8$) in \cite[p.~68]{MR2787973} one can show that $F_2^S(t)$ and $\tilde F_2^{S}$ do not simplify to the form \eqref{eq:ShinaultForm} of a linear combination of derivatives of $F$ with (rational) polynomial coefficients (at least not for orders up to $50$, cf. the discussion in Appendix~\ref{app:ST}).}
\begin{subequations}\label{eq:F22S}
\begin{align}
F_{2}^S(t) &= -\frac{3}{4}\frac{F'(t)^4}{F(t)^3} + \frac{3}{2}\frac{F'(t)^2F''(t)}{F(t)^2}- \frac{3}{8}\frac{F''(t)^2}{F(t)} -  \frac{1}{2}\frac{F'(t) F'''(t)}{F(t)} + \frac{1}{8} F^{(4)}_2(t) ,\label{eq:F22Splain} \\*[2mm]
\tilde F_{2}^S(t) &= -\frac{1}{2}F'(t) + \frac{1}{4}\frac{F'(t)^4}{F(t)^3}- \frac{3}{8}\frac{F''(t)^2}{F(t)}  + \frac{1}{8} F^{(4)}_2(t).\label{eq:F22Stilde} 
\end{align}
\end{subequations}
The solution $r_n$ of the equation $a(r_n)=n$, required to evaluate $S_{n,l}$, satisfies the expansion\/\footnote{This provides excellent initial guesses for solving $a(r_n)=n$ by iteration; cf.~\cite[Sect.~3.4]{arxiv.2206.09411}. It also helps to understand the quantitative observations made in \cite[Example~12.5]{MR2754188}.}
\[
r_n = n + \frac{F'\big(t_l(n) \big)}{F \big(t_l(n) \big)} n^{1/3} + O(1),
\]
which is uniformly valid under the same conditions.
\end{theorem}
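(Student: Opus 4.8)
Following the preparations recorded before the theorem, the plan is to expand the explicitly defined quantities $S_{n,l}$ and $\tilde S_{n,l}$ directly from \eqref{eq:Stirling}, using only the expansion of the Poisson generating function $P(r)=E_2^{\text{hard}}(4r;l)$ from Theorem~\ref{thm:poissonExpandnu} — equivalently \eqref{eq:PnExpansion} together with its repeated $r$-derivatives \eqref{eq:Pnnexpand}, which hold uniformly and term-by-term differentiably on the window $n-n^{3/5}\leq r\leq n+n^{3/5}$; no tameness hypothesis is needed for this part. First I would record the algebraic identities
\[
a(r)=r+r\,\frac{P'(r)}{P(r)},\qquad b(r)=r\,a'(r)=r\Big(1+\frac{P'(r)}{P(r)}+r\,\frac{P''(r)}{P(r)}-r\,\frac{P'(r)^2}{P(r)^2}\Big),
\]
and substitute the expansions of $P(r)$, $P'(r)$, $P''(r)$ in powers of $r^{-1/3}$ (with $t=t_l(r)$) to obtain expansions of $a(r)$ and $b(r)$ on the window; since $P(r)\to F(t)>0$ and $F'(t)=F(t)u_{00}(t)$ by \eqref{eq:F2der}, the ratios $P'(r)/P(r)$, $P''(r)/P(r)$ produce the combinations $F'/F$, $F''/F$, $\ldots$ that account for the genuinely non-polynomial shape of \eqref{eq:F22S}.

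For the simplified formula one sets $r=n$: insert \eqref{eq:Pnnexpand} (and the expansion of $P_n'(n)$ from \eqref{eq:PrPrime}) into the three factors $P(n)$, $(b(n)/n)^{-1/2}$, $\exp\!\big(-(n-a(n))^2/(2b(n))\big)$, expand each to the required order in $n^{-1/3}$ using the boundedness of $1/P(n)$ and of $n/b(n)$, and multiply out; this yields a purely Poisson-based expansion
\begin{equation}\label{eq:SnlTilde2}
\tilde S_{n,l}=F(t)+\sum_{j=1}^{m}\widehat{\tilde F}_{j}(t)\,n^{-j/3}+O\big(n^{-(m+1)/3}\big)\,\bigg|_{t=t_l(n)}
\end{equation}
(no tameness required). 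A short computation gives $\widehat{\tilde F}_1=F_1^P-\tfrac12F''$, which by \eqref{eq:F2FPrel} equals $F_1^D$; comparing \eqref{eq:SnlTilde2} with Theorem~\ref{thm:lengthdistexpan} — the one place the tameness hypothesis enters — the $j=0$ and $j=1$ terms cancel in $\prob(L_n\leq l)-\tilde S_{n,l}$, so that expansion starts at $j=2$ with $\tilde F_2^S=F_2^D-\widehat{\tilde F}_2$; rewriting the result via \eqref{eq:F2der} produces \eqref{eq:F22Stilde}.

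For the full formula the extra ingredient is the asymptotics of the saddle point $r_n$, defined by $a(r_n)=n$. By $H$-admissibility $a$ is eventually strictly increasing with $a(r)=r+O(r^{1/3})$ (cf.\ \eqref{eq:anbn}), so the relation $r_n-n=-r_n P'(r_n)/P(r_n)$ can be solved by bootstrapping against the expansion of $a$: the leading balance gives $r_n=n+\frac{F'(t_l(n))}{F(t_l(n))}\,n^{1/3}+O(1)$ (the asserted formula), and continuing the iteration produces the lower-order corrections needed downstream; in particular $r_n=n+O(n^{1/3})$ lies well inside the window, so the $P$-expansion applies at $r_n$. Substituting into $S_{n,l}=P(r_n)\,(b(r_n)/n)^{-1/2}\exp\!\big(n\Lambda((r_n-n)/n)\big)$, Taylor-expanding $F$ and the $F_j^P$ about $t_l(n)$ with the discrepancy $t_l(r_n)-t_l(n)=t_l'(n)(r_n-n)+\cdots$ re-expressed in powers of $n^{-1/3}$ via the differential equation \eqref{eq:tnuprime}, and expanding $\Lambda(h)=\tfrac12h^2-\tfrac13h^3+\cdots$ at $h=(r_n-n)/n=O(n^{-2/3})$, one obtains in the same way an expansion
\begin{equation}\label{eq:Snl2}
S_{n,l}=F(t)+\sum_{j=1}^{m}\widehat{F}_{j}(t)\,n^{-j/3}+O\big(n^{-(m+1)/3}\big)\,\bigg|_{t=t_l(n)}
\end{equation}
with, again, $\widehat F_1=F_1^P-\tfrac12F''=F_1^D$; hence $\prob(L_n\leq l)-S_{n,l}$ starts at $j=2$, $F_2^S=F_2^D-\widehat F_2$, and simplification gives \eqref{eq:F22Splain}. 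Combined with the expansion of $r_n$ already obtained, this proves the theorem.

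The main obstacle is the sheer volume and required precision of the bookkeeping in the $S_{n,l}$ case: $r_n$ has to be carried through several correction orders; the induced shift $t_l(r_n)\neq t_l(n)$ must be propagated consistently, with each differentiation of $t_l$ lowering the order by a factor $r^{-2/3}$ or $r^{-1}$ via \eqref{eq:tnuprime}; and the cross terms between the $O(r^{-(m+1)/3})$ remainder of the $P$-expansion and the algebraic prefactors must be controlled so that the uniform remainder $O(n^{-(m+1)/3})$ survives for every $m\leq m_*$. No conceptual difficulty remains beyond this: the vanishing of the $j=0,1$ terms is forced once \eqref{eq:F2FPrel} is known, and the fact that $F_2^S$ and $\tilde F_2^S$ do not collapse to the polynomial-coefficient form \eqref{eq:ShinaultForm} is simply read off from the honest ratios such as $F'(t)^4/F(t)^3$ appearing in \eqref{eq:F22S} that the $u_{jk}$-calculus cannot remove.
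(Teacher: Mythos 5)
Your proposal follows essentially the same route as the paper's own proof: expand $P$, $a$, $b$ on the window via Theorem~\ref{thm:poissonExpandnu} and its derivatives, substitute into the closed forms \eqref{eq:Stirling} (solving $a(r_n)=n$ by an ansatz/bootstrap for $S_{n,l}$, tracking the shift $t_l(r_n)$ via \eqref{eq:tnuprime}), verify the $j=0,1$ cancellation through $\widehat F_1=\widehat{\tilde F}_1=F_1^P-\tfrac12F''=F_1^D$, and invoke the tameness hypothesis only for the final comparison with Theorem~\ref{thm:lengthdistexpan}. The paper makes this concrete for $m=2$ with explicit truncated power-series bookkeeping, but the logic is identical to what you describe.
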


\begin{proof} We restrict ourselves to the case $m=2$, focussing on the concrete functional form of the expansion terms; nevertheless the general form of the expansions \eqref{eq:StirlingExpand} should become clear along the way.

\medskip
{\em Preparatory steps.}
Because of $P(r) = E_2^\text{hard}(4r;l)$ (using the notation preceding Thm.~\ref{thm:stirling}), Thm.~\ref{thm:poissonExpandnu} gives that
\begin{equation}\label{eq:PrPlain}
P(r) = \left.F (t) + F_{1}^P (t) \cdot r^{-1/3} + F_{2}^P (t) \cdot r^{-2/3} + O(r^{-1})\,\right|_{t=t_l(r)},
\end{equation}
which is uniformly valid when $r,l \to\infty$ subject to the constraint $t_0 \leq t_l(r) \leq t_1$ (the same constraint applies to the expansions to follow). Preserving uniformity, the expansion can be repeatedly differentiated w.r.t. the variable $r$, which yields by using the differential equation \eqref{eq:tnuprime} satisfied by $t_l(r)$ (cf. also \eqref{eq:Pnnlead})
\begin{equation}\label{eq:PrPrime}
P'(r) = \left. -F'(t) r^{-2/3} - \Big(F_{1}^P\!\!\phantom{|}'(t) + \frac{t}{6} F'(t)\Big)r^{-1} + O(r^{-4/3})\, \right|_{t=t_l(r)}.
\end{equation}
Recalling $f(r) = e^r P(r)$, we thus get 
\begin{subequations}\label{eq:arbr}
\begin{equation}
a(r) = r + r \frac{P'(r)}{P(r)} = \left. r + a_1(t) r^{1/3} + a_2(t) + O(r^{-1/3})\, \right|_{t=t_l(r)}
\end{equation}
with the coefficient functions
\begin{equation}
a_1(t) = -\frac{F'(t)}{F(t)},\qquad a_2(t) = -\frac{1}{F(t)}\Big(F_{1}^P\!\!\phantom{|}'(t) + \frac{t}{6} F'(t) \Big) + \frac{F_{1}^P(t)F'(t)}{F(t)^2}; 
\end{equation}
a further differentiation yields
\begin{equation}
b(r) = r a'(r) = \left. r- a_1'(t) r^{2/3} + \Big(\frac{1}{3} a_1(t) - \frac{t}{6}a_1'(t) - a_2'(t) \Big) r^{1/3} + O(1)\, \right|_{t=t_l(r)}.
\end{equation}
\end{subequations}

{\em The simplified Stirling-type formula.} Here we have $r=n$ and we write $t_* := t_l(n)$ to be brief. By inserting the expansions~\eqref{eq:PrPlain} and \eqref{eq:arbr} into the expression \eqref{eq:Snltilde}, we obtain after a routine calculation with truncated power series and collecting terms as in \eqref{eq:F2FPrel} that
\begin{equation}\label{eq:SnlTilde2}
\tilde S_{n,l} = F(t_*) + F_{1}^D(t_*) n^{-1/3} + \Big(F_{2}^D(t_*) - \tilde F_{2}^S(t_*) \Big) n^{-2/3} + O(n^{-1}),
\end{equation}
where the remaining $\tilde F_{2}^S(t)$ is given by \eqref{eq:F22Stilde}; a subtraction from \eqref{eq:distExpand} yields \eqref{eq:StildeExpan}.

\medskip

{\em The Stirling-type formula.} Here we have $r=r_n$ and we have to distinguish between $t_*$ and 
\[
t_l(r) = t_* \cdot (n/r)^{1/6} + 2 \frac{\sqrt{n}-\sqrt{r}}{r^{1/6}}.
\]
By inserting the expansion 
\begin{subequations}\label{eq:areqn}
\begin{equation}
r_n = n + r_1(t_*) n^{1/3} + r_2(t_*) + O(n^{-1/3})
\end{equation}
into $t_n(r_n)$ and $a(r_n)$ we obtain
\begin{align}
t_l(r_n) &= t_* - r_1(t_*) n^{-1/3} - \Big(r_2(t_*) + \frac{t_*}{6} r_1(t_*)\Big) n^{-2/3} + O(n^{-1})\\*[1mm]
a(r_n) &= n +(a_1(t_*) + r_1(t_*)) n^{1/3} + \Big(a_2(t_*) + r_2(t_*) -r_1(t_*) a_1'(t_*) \Big) + O(n^{-1/3}).
\end{align}
Thus the solution of $a(r_n) = n$, which by Thm.~\ref{thm:hayman} is unique, leads to the relations
\begin{equation}
r_1(t) = -a_1(t),\qquad r_2(t) = - a_2(t) -a_1(t)a_1'(t).
\end{equation}
\end{subequations}
By inserting, first, the expansions~\eqref{eq:areqn} into the expansions~\eqref{eq:PrPlain} and \eqref{eq:arbr} for the particular choice $r=r_n$ and, next, the thus obtained results into the expression \eqref{eq:Snl}, we obtain after a routine calculation with truncated power series and collecting terms as in \eqref{eq:F2FPrel} that
\begin{equation}\label{eq:Snl2}
S_{n,l} = F(t_*) + F_{1}^D(t_*)  n^{-1/3} + \Big(F_{2}^D(t_*)  - F_{2}^S(t_*) \Big) n^{-2/3} + O(n^{-1}),
\end{equation}
where the remaining $F_{2}^S(t)$ is given by \eqref{eq:F22Splain}; a subtraction from \eqref{eq:distExpand} yields \eqref{eq:SExpan}.
\end{proof}

\begin{figure}[tbp]
\includegraphics[width=0.325\textwidth]{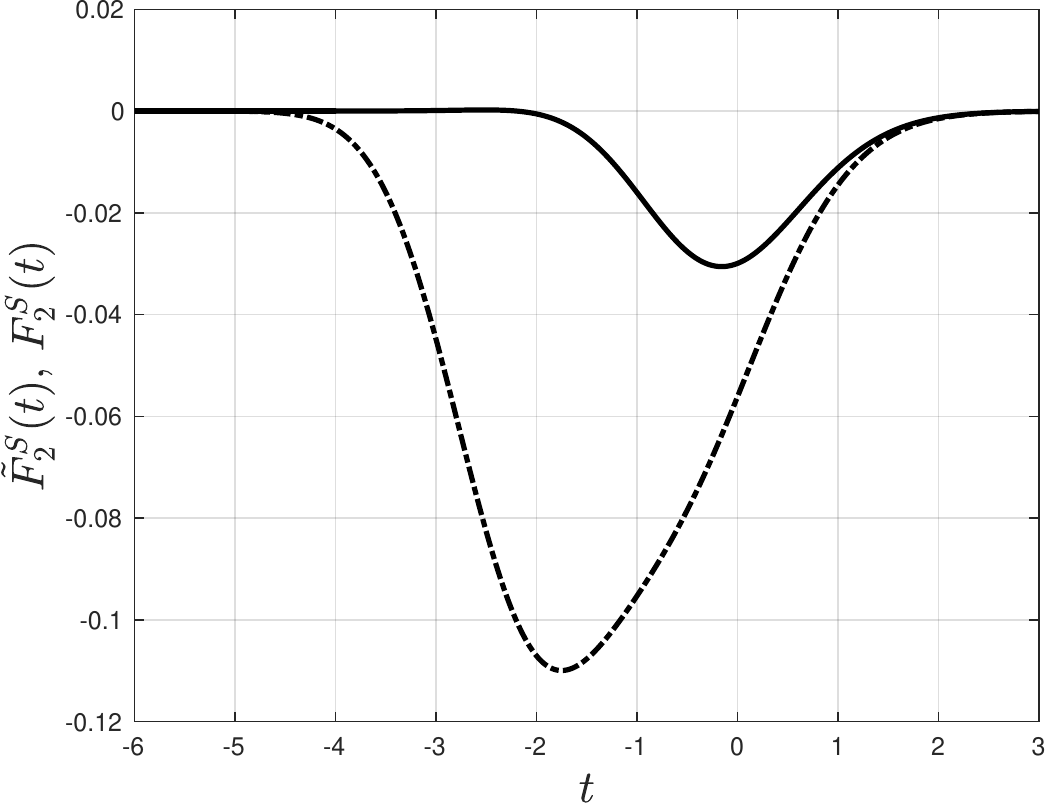}\hfil\,
\includegraphics[width=0.325\textwidth]{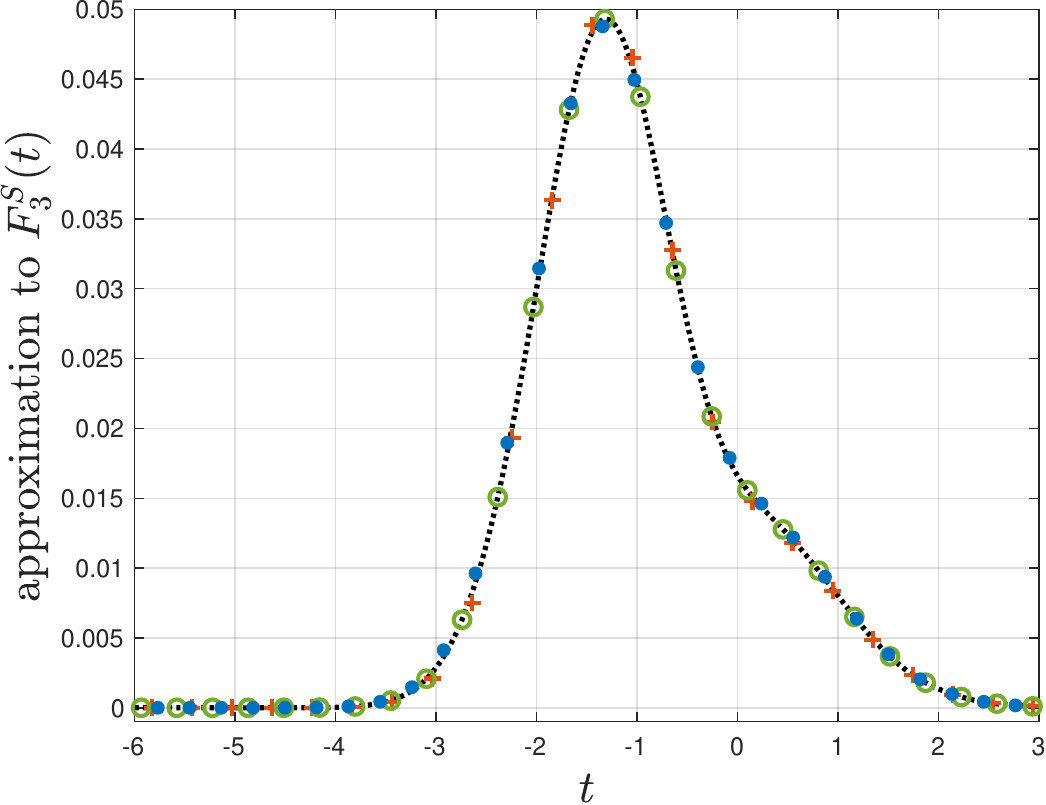}\hfil\,
\includegraphics[width=0.325\textwidth]{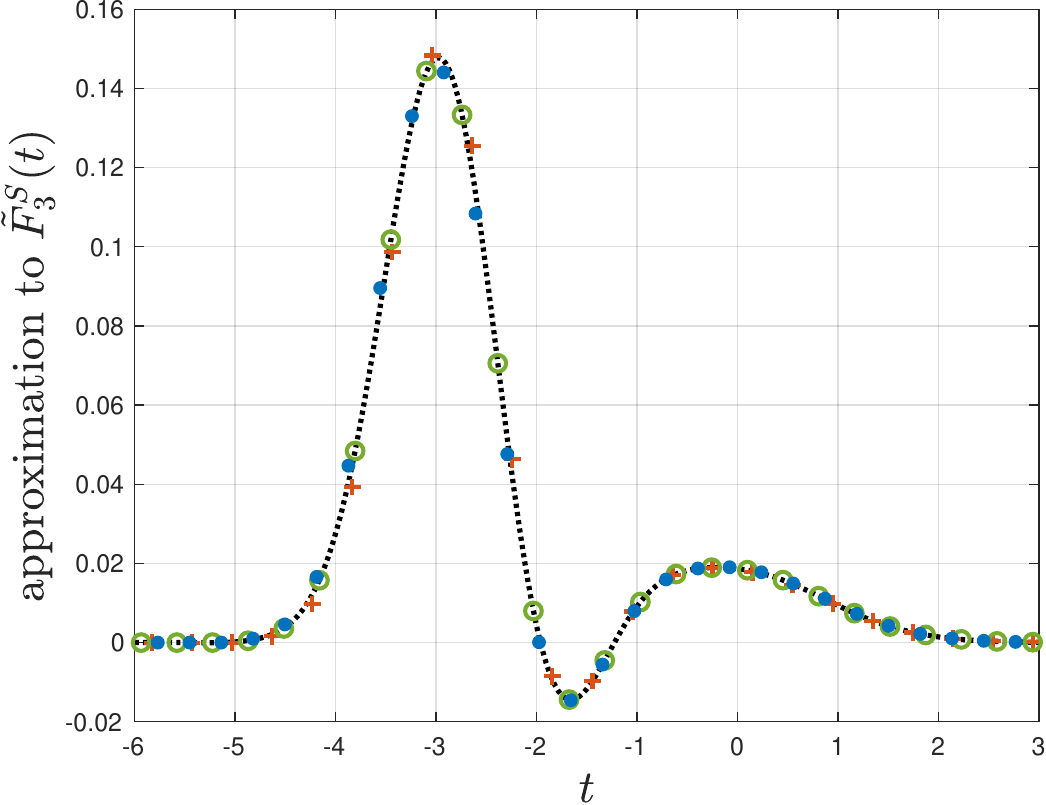}
\caption{{\footnotesize Left panel: plots of $\tilde F_{2}^S(t)$ (solid line) and $\tilde F_{2}^S(t)$ (dash-dotted line) as in \eqref{eq:F22S}. 
The middle and right panel show the approximations of $F_{3}^S(t)$ and $\tilde F_{3}^S(t)$ in \eqref{eq:F23S} for $n=250$ (red $+$), $n=500$ (green $\circ$) and  $n=1000$ (blue $\bullet$); the integer $l$ has been varied such that $t_l(n)$ spreads over the range of the variable $t$ displayed here; the dotted line displays a polynomial fit to the data points of degree $30$ to help visualizing their joint graphical form. Evaluation of \eqref{eq:F23S} uses the table of exact values of $\prob(L_n\leq l)$ up to $n=1000$ that was compiled in \cite{arxiv.2206.09411}.}}
\label{fig:F23S}
\end{figure}

To validate the expansions \eqref{eq:StirlingExpand} and the formulae (\ref{eq:F22S}a/b), Fig.~\ref{fig:F23S} plots the approximations
\begin{subequations}\label{eq:F23S}
\begin{align}
F_{3}^S\big(t_l(n)\big) &\approx n^{-1} \Big( \prob(L_n \leq l) - S_{n,l} - F_{2}^S\big(t_l(n)\big) n^{-2/3}\Big),\\*[1mm]
\tilde F_{3}^S\big(t_l(n)\big) &\approx n^{-1} \Big( \prob(L_n \leq l) - \tilde S_{n,l} - \tilde F_{2}^S\big(t_l(n)\big) n^{-2/3}\Big)
\end{align}
\end{subequations}
for $n=250$, $n=500$ and $n=1000$, varying the integer $l$ in such a way that $t=t_l(n)$ spreads over $[-6,3]$. The plot suggests the following observations:

\medskip

\begin{itemize}\itemsep=4pt
\item Apparently there holds $\tilde F_{2}^S(t) < F_{2}^S(t) < 0$ for $t\in[-6,3]$, which if generally true would imply
\[
\prob(L_n \leq l) < S_{n,l} < \tilde S_{n,l}
\] 
for $n$ being sufficiently large and $l$ near the mode of the distribution.
This one-sided approximation of the length distribution by the Stirling formula $S_{n,l}$ from above is also clearly visible in \cite[Tables~1/2]{arxiv.2206.09411}.
\item Comparing $F_{2}^S(t)$ in Fig.~\ref{fig:F23S} to  $F_{2}^D(t)$ in Fig.~\ref{fig:F23D} shows that the maximum error
\[
\qquad\quad \max_{l=1,\ldots,n} \left| \prob(L_n \leq l) - \big(F(t) + F_{1}^D(t) n^{-1/3}\big)\big|_{t=t_l(n)}\right| \approx n^{-2/3} \|F_{2}^D\|_\infty \approx 0.25n^{-2/3}
\]
of approximating the length distribution by the first finite-size correction in Thm.~\ref{thm:lengthdistexpan} is about an order of magnitude larger than the maximum error of the Stirling-type formula,
\[
\max_{l=1,\ldots,n} \left| \prob(L_n \leq l) - S_{n,l}\right| \approx n^{-2/3} \|F_{2}^S\|_\infty \approx 0.031n^{-2/3}.
\]
This property of the Stirling-type formula was already observed in \cite[Fig.~3]{arxiv.2206.09411} and was used there to approximate the graphical form of $F_{2}^D(t)$ (see \cite[Fig.~6]{arxiv.2206.09411}).
\end{itemize}

\begin{remark}\label{rem:vanillaStirling}
If one includes the classical Stirling factor \eqref{eq:tau} into the Stirling-type formula by replacing \eqref{eq:Snl} with the unmodified normal approximation \eqref{eq:normal}, that is, with
\[
S_{n,l}^* := \tau_n\frac{P(r_n)}{\sqrt{b(r_n)/n}} \exp\left(n\,\Lambda\!\left(\frac{r_n-n}n\right)\right)  = \tau_n S_{n,l},
\]
Thm.~\ref{thm:stirling} would remain valid: in fact, multiplication of \eqref{eq:SExpan} by the expansion \eqref{eq:tau} of $\tau_n$ in powers of $n^{-1}$ gives, by taking \eqref{eq:distExpand} into account,
\[
\prob(L_n \leq l) = S^*_{n,l} + \sum_{j=2}^m F_{j}^{S^*}\big(t_l(n)\big)\, n^{-j/3} + O(n^{-(m+1)/3}),
\]
where the first two coefficient functions are
\[
F_{2}^{S^*}(t)= F_{2}^{S}(t), \quad F_{3}^{S^*}(t)= F_{3}^{S}(t) - \frac{1}{12} F(t), \quad\ldots \;.
\]
Because of $\lim_{t\to+\infty}F(t) = 1$, we would loose the decay of $F_{3}^S(t)$ for large $t$, leaving us with a non-zero residual value coming from the classical Stirling factor. For this reason, we recommend dropping the factor $\tau_n$, thereby resolving an ambiguity expressed in \cite[Fn.~28]{arxiv.2206.09411}.
\end{remark}

\section{Expansions of Expected Value and Variance}\label{sect:Ulam}

Lifting the expansion \eqref{eq:PDFexpan} of the PDF of the length distribution to one of the expected value and variance requires a control of the tails (of the distribution itself and of the expansion terms) which, at least right now, we can only conjecture to hold true. 

To get to a reasonable conjecture, we recall the tail estimates for the discrete distribution (see \cite[Eqn.~(9.6/9.12)]{MR1682248}), 
\begin{align*}
\prob(L_n= l) &\leq C e^{- c |t_l(n)|^3}    &\hspace*{-3cm} (t_l(n) \leq t_0 < 0),\\*[1mm]
\prob(L_n= l) &\leq C e^{-c |t_l(n)|^{3/5}}  &\hspace*{-3cm} ( 0< t_1 \leq t_l(n)),
\end{align*} 
when $n$ is large enough with $c>0$ being some absolute constant and $C>0$ a constant that depends on $t_0, t_1$. 

On the other hand, from Thm.~\ref{thm:lengthdistexpan} and its proof we see that the $F_{j}^*(t)$ take the form
\begin{equation}\label{eq:FjstarForm}
F(t) \cdot \big(\text{rational polynomial in terms of the form $\tr((I-K_0)^{-1}K)|_{L^2(t,\infty)}$}\big),
\end{equation}
where the kernels $K$ are finite sums of rank one kernels with factors of the form \eqref{eq:airyform}. The results of Sect.~\ref{sect:Plemelj} thus show that the $F_{j}^*(t)$ are exponentially decaying when $t\to\infty$. Now, looking at the left tail, the (heuristic) estimate of the largest eigenvalue of the Airy operator ${\mathbf K_0}$ on $L^2(t,\infty)$ as given in the work of Tracy and Widom \cite[Eq.~(1.23)]{MR1257246} shows a superexponential growth bound of the operator norm
\[
\|({\mathbf I}-{\mathbf K_0})^{-1}\| \leq C |t|^{-3/4} e^{c |t|^{3/2}} \qquad (t\leq t_0).
\]
This, together with the superexponential decay (see \cite[Cor.~1.3]{MR2395479} and \cite[Thm.~1]{MR2373439} for the specific constants) 
\[
0\leq F(t) \leq C |t|^{-1/8} e^{-c |t|^3} \qquad (t\leq t_0)
\]
of the Tracy--Widom distribution itself, and with an at most polynomial growth of the trace norms $\|{\mathbf K}\|_{{\mathcal J}^1(t,\infty)}$ as $t\to-\infty$,
shows that the bounds for the discrete distribution find a counterpart for the expansion terms $F_{j}^*(t)$:
\begin{equation}\label{eq:Fjtail}
\begin{aligned}
|F_{j}^*(t)| &\leq C e^{- c |t|^3}    &\quad (t\leq t_0 < 0),\\*[1mm]
|F_{j}^*(t)| &\leq C e^{-c |t|}       &\quad ( 0< t_1 \leq t).
\end{aligned}
\end{equation}
Thus, assuming an additional amount of uniformity that would allow us to absorb the exponentially small tails in the error term of Corollary \ref{cor:PDFexpan}, we conjecture the following:

\subsubsection*{\bf Uniform Tails Hypothesis} The expansion  \eqref{eq:PDFexpan} can be sharpened to include the tails in the form
\begin{equation}\label{eq:PDFexpan2}
n^{1/6}\,\prob(L_n=l) = 
 F' (t) + \sum_{j=1}^m F_{j}^*(t) n^{-j/3} + n^{-(m+1)/3} \cdot O\Big( e^{-c |t|^{3/5}}\Big)\,\bigg|_{t=t_{l-1/2}(n)},
\end{equation}
uniformly valid in $l=1,\ldots,n$ as $n\to\infty$. 

\medskip

We now follow the ideas sketched in our work \cite[§4.3]{arxiv.2206.09411} on the Stirling-type formula. By shift and rescale, the expected value of $L_n$ can be written in the form
\[
\E(L_n) = \sum_{l=1}^n l\cdot \prob(L_n=l) = 2\sqrt{n} +\frac{1}{2} + \sum_{l=1}^n t_{l-1/2}(n)\cdot n^{1/6}\, \prob(L_n=l).
\]
Inserting the expansion \eqref{eq:PDFexpan2} of the uniform tail hypothesis gives, since its error term is uniformly summable,
\begin{equation}\label{eq:EexpandVanilla}
\E(L_n) = 2\sqrt{n} +\frac{1}{2} + \sum_{j=0}^m \mu_j^{(n)} n^{1/6 - j/3} + O(n^{-1/6-m/3}),
\end{equation}
with coefficients (still depending on $n$, though), writing $F_0^* := F'$,
\[
\mu^{(n)}_j := n^{-1/6} \sum_{l=1}^n t_{l-1/2}(n) F_j^*\big(t_{l-1/2}(n)\big).
\]
By the tail estimates \eqref{eq:Fjtail} we have, writing $a :=- n^{-1/6}(2\sqrt{n} + \tfrac12)$ and $h:=n^{-1/6}$,
\[
\mu^{(n)}_j = h \sum_{l=-\infty}^\infty (a+lh) F_j^*(a+l h) + O(e^{-c n^{1/3}}). 
\]
Now, based on a precise description of its pole field in \cite{MR3493969}, it is known that the Hastings--McLeod solution of Painlevé II and a fortiori, by the Tracy--Widom theory \cite{MR1257246},  also $F$ and its derivatives can be continued analytically to the strip $|\Im z| < 2.9$. Therefore, we assume:

\subsection*{Uniform Strip Hypothesis} The $F_j^*$ ($j=1,\ldots,m$) extend analytically to a strip $|\Im z| \leq s$ of the complex $z$-plane, uniformly converging to $0$ as $z\to\infty$ in that strip.

\medskip

\noindent
Under that hypothesis, a classical result about the rectangular rule in quadrature theory (see, e.g., \cite[Eq.~(3.4.14)]{MR760629}) gives
\[
h \sum_{l=-\infty}^\infty (a+lh)  F_j^*(a+l h)  = \int_{-\infty}^\infty F_j^*(t)\,dt + O(e^{-\pi s /h}).
\]
Thus the $\mu_j^{(n)}$ and their limit quantities
\[
\mu_j = \int_{-\infty}^\infty t F_j^*(t)\,dt
\]
 differ only by an exponential small error of at most $O(e^{-c n^{1/6}})$, which can be absorbed in the error term of \eqref{eq:EexpandVanilla}; an illustration of such a rapid convergence is given in \cite[Table~3]{arxiv.2206.09411} for the case $j=0$.
 
The functional form of $F_0^* = F'$, $F_1^*$ and $F_2^*$, namely being a linear combination of higher order derivatives of $F$ with polynomial coefficients (see \eqref{eq:F22star}), allows us to express $\mu_0,\mu_1,\mu_2$ in terms of the moments
\[
M_j := \int_{-\infty}^\infty t^j F'(t)\,dt
\]
of the Tracy--Widom distribution $F$. In fact, repeated integration by parts yields the simplifying rule (where $k\geq 1$)
\[
\int_{-\infty}^\infty t^j\, F^{(k)}(t) \,dt = 
\begin{cases}
\displaystyle\frac{(-1)^{k-1} j!}{(j-k+1)!} M_{j-k+1} &\quad k\leq j+1,\\*[4mm]
0 &\quad\text{otherwise}.
\end{cases}
\]
Repeated application of that rule proves, in summary, the following contribution to Ulam's problem about the expected value when $n$ grows large.

\begin{theorem}\label{thm:Eexpand} Let $m$ be any fixed non-negative integer. Then, subject to the tameness, the uniform tails and the uniform strip hypotheses there holds, as $n\to \infty$,
\begin{equation}\label{eq:Eexpand}
\E(L_n) = 2\sqrt{n} +\frac{1}{2} + \sum_{j=0}^m \mu_j n^{1/6 - j/3} + O(n^{-1/6-m/3}),
\end{equation}
where the constants $\mu_j$ are given by
\[
\mu_0 = \int_{-\infty}^\infty t F'(t)\,dt, \qquad \mu_j = \int_{-\infty}^\infty t F_j^*(t)\,dt \quad (j=1,2,\ldots).
\]
The first few cases can be expressed in terms of the moments of the Tracy--Widom distribution\/{\rm:}
\begin{equation}\label{eq:mu2}
\begin{gathered}
\mu_0 = M_1, \quad \mu_1 = \frac{1}{60}M_2,\quad \mu_2 = \frac{89}{350} - \frac{1}{1400}M_3,\quad
\mu_3 = \frac{538}{7875} M_1+\frac{281}{4536000} M_4;
\end{gathered}
\end{equation}
highly accurate numerical values are listed in {Table~{\rm\ref{tab:numerical}}}.
\end{theorem}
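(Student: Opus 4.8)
The plan is to follow the route sketched in the paragraphs preceding the theorem: turn the PDF expansion of Corollary~\ref{cor:PDFexpan} into an expansion of $\E(L_n)$ by summation, and then replace the resulting discrete sums by integrals.

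First I would rewrite the mean by shift and rescale. Since $l-\tfrac12 = 2\sqrt n + n^{1/6}\,t_{l-1/2}(n)$ by the definition of $t_\nu(r)$ and $\sum_{l=1}^n \prob(L_n=l)=1$, one gets
\[
\E(L_n) = 2\sqrt n + \tfrac12 + \sum_{l=1}^n t_{l-1/2}(n)\cdot n^{1/6}\,\prob(L_n=l).
\]
Into this sum I would insert the sharpened PDF expansion \eqref{eq:PDFexpan2} furnished by the Uniform Tails Hypothesis. The nodes $t_{l-1/2}(n)$ are spaced by $n^{-1/6}$ and the remainder carries the integrable weight $e^{-c|t|^{3/5}}$, so the remainder contributes $O\big(n^{1/6}\cdot n^{-(m+1)/3}\big)=O(n^{-1/6-m/3})$, while the $F_j^*$-terms (with $F_0^*:=F'$) contribute, after pulling out the power $n^{1/6-j/3}$, the finite Riemann-type sums $\mu_j^{(n)}:=n^{-1/6}\sum_{l=1}^n t_{l-1/2}(n)\,F_j^*\big(t_{l-1/2}(n)\big)$. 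This yields \eqref{eq:EexpandVanilla}.

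The crux is then to show $\mu_j^{(n)}=\mu_j+O\big(e^{-cn^{1/6}}\big)$. Writing $h:=n^{-1/6}$ and $a:=-n^{-1/6}(2\sqrt n+\tfrac12)$, so that $t_{l-1/2}(n)=a+lh$, the tail bounds \eqref{eq:Fjtail} allow extending the summation to all of $\Z$ at the cost of $O(e^{-cn^{1/3}})$, since the truncated nodes have $|t|$ of order $\sqrt n$. On the full lattice, the Uniform Strip Hypothesis makes $t\mapsto t\,F_j^*(t)$ analytic and decaying in a horizontal strip $|\Im z|\le s$, so the classical error bound for the rectangular rule in quadrature theory (e.g. \cite[Eq.~(3.4.14)]{MR760629}) gives $h\sum_{l\in\Z}(a+lh)\,F_j^*(a+lh)=\int_{\R} t\,F_j^*(t)\,dt+O(e^{-\pi s/h})$. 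Both errors are absorbed into $O(n^{-1/6-m/3})$, which proves \eqref{eq:Eexpand}. For the explicit constants, $F_0^*=F'$ and, by \eqref{eq:F22star}, each $F_j^*$ is a linear combination of $F',F'',\dots$ with rational polynomial coefficients, so $\mu_j$ reduces to a sum of integrals $\int_{\R} t^k F^{(\kappa)}(t)\,dt$; repeated integration by parts (all arising boundary terms vanishing since $F^{(\kappa')}$ decays superexponentially at both ends for $\kappa'\ge1$) yields the stated reduction rule, and elementary arithmetic then gives $\mu_0=M_1$, $\mu_1=\tfrac1{60}M_2$, $\mu_2=\tfrac{89}{350}-\tfrac1{1400}M_3$, and $\mu_3$ as in \eqref{eq:mu3}.

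The main obstacle is conceptual rather than computational: the whole argument rests on the three unproven hypotheses. Within the proof itself, the genuinely delicate point is the passage from the finite discrete sum $\mu_j^{(n)}$ to the integral $\mu_j$, which simultaneously needs the tail decay (to truncate the infinite sum) and the strip analyticity (for the exponentially small quadrature error), together with the bookkeeping required to make sure the implied constants in \eqref{eq:PDFexpan2} are uniform over $l=1,\dots,n$ so that the remainder is genuinely uniformly summable.
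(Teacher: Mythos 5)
Your argument is correct and reproduces the paper's own proof step by step: the shift-and-rescale rewriting of $\E(L_n)$, insertion of the sharpened PDF expansion \eqref{eq:PDFexpan2}, extension of the finite lattice sum over $l\in\{1,\dots,n\}$ to all of $\Z$ via the tail bounds \eqref{eq:Fjtail}, the exponentially small rectangular-rule quadrature error under the uniform strip hypothesis, and the integration-by-parts reduction to the moments $M_j$. One small slip in passing: the truncated left and right nodes have $|t_{l-1/2}(n)|\sim n^{1/3}$ (at $l\lesssim0$) and $|t_{l-1/2}(n)|\sim n^{5/6}$ (at $l\gtrsim n$), not $\sqrt n$, but this does not affect the stated $O(e^{-cn^{1/3}})$ extension error, which remains a valid (indeed generous) bound.
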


Likewise, by shift and rescale, the variance of $L_n$ can be written in the form
\[
\begin{aligned}
\Var(L_n) &= \sum_{l=1}^n l^2\cdot \prob(L_n=l) - \E(L_n)^2\\*[1mm] 
& = n^{1/3} \sum_{l=1}^n t_{l-1/2}(n)^2 \cdot \prob(L_n=l) - 
\Big( \E(L_n) - 2\sqrt{n} - \frac{1}{2}\Big)^2.
\end{aligned}
\]
By inserting the expansions \eqref{eq:PDFexpan2}, \eqref{eq:Eexpand} and arguing as for Thm.~\ref{thm:Eexpand} we get the following:

\begin{table}[tbp]
\caption{Highly accurate values of $\mu_0,\ldots,\mu_3$ and $\nu_0,\ldots,\nu_3$ as computed from \eqref{eq:mu2}, \eqref{eq:nu2}  based on values for $M_j$ obtained as in \cite[Table~3]{arxiv.2206.09411} (cf.  Prähofer's values for $M_1,\ldots,M_4$, published in \cite[p.~70]{MR2787973}). For the values of $\mu_4,\mu_5$ and $\nu_4,\nu_5$ see the supplementary material mentioned in Fn.~\ref{fn:m5}.}
\label{tab:numerical}
{\footnotesize
\begin{tabular}{rrrr}
\hline\noalign{\smallskip}
$j$ & $M_{j}$\hspace*{1.75cm} & $\mu_j$ \hspace*{1.5cm}& $\nu_j$ \hspace*{1.5cm} \\
\noalign{\smallskip}\hline\noalign{\smallskip}
$0$ & $  1.00000\,00000\,00000\,00000\cdots$& $-1.77108\,68074\,11601\,62598\cdots$ & $ 0.81319\,47928\,32957\,84477\cdots$\\
$1$ & $ -1.77108\,68074\,11601\,62598\cdots$& $ 0.06583\,23878\,70339\,62521\cdots$ & $-1.20720\,50777\,85797\,46901\cdots$\\
$2$ & $  3.94994\,32722\,20377\,51300\cdots$& $ 0.26122\,27462\,52162\,60525\cdots$ & $ 0.56715\,66368\,69744\,43503\cdots$\\
$3$ & $ -9.71184\,47530\,27647\,35361\cdots$& $-0.11938\,39067\,94582\,09131\cdots$ & $ 0.01669\,21858\,10456\,60764\cdots$\\
$4$ & $ 26.02543\,54268\,39994\,56536\cdots$& $-0.00483\,35524\,95005\,83878\cdots$ & $-0.12447\,09934\,16776\,05579\cdots$\\
$5$ & $-74.20410\,74434\,81824\,47477\cdots$& $ 0.01222\,78407\,77590\,95405\cdots$ & $-0.00293\,40551\,03931\,43008\cdots$\\
\noalign{\smallskip}\hline
\end{tabular}}
\end{table}

\begin{corollary}\label{cor:Vexpand} Let $m$ be any fixed non-negative integer. Then, subject to the tameness, the uniform tails and the uniform strip hypotheses there holds, as $n\to \infty$,
\begin{equation}\label{eq:Vexpand}
\Var(L_n) =  \sum_{j=0}^m \nu_j n^{1/3 - j/3} + O(n^{-m/3}),\\*[2mm]
\end{equation}
with certain constants $\nu_j$. The first few cases can be expressed in terms of the moments of the Tracy--Widom distribution
\begin{equation}\label{eq:nu2}
\begin{gathered}
\nu_0 = -M_1^2 + M_2, \quad \nu_1 = -\frac{67}{60} + \frac{1}{30} \big(-M_1 M_2 + M_3\big),\\*[2mm]
\nu_2 = -\frac{57}{175} M_1 + \frac{1}{700} M_1 M_3 - \frac{1}{3600}M_2^2  - \frac{29}{25200}M_4,\\*[2mm]
\nu_3 = 
-\frac{1076}{7875}M_1^2 -\frac{281}{2268000}M_1M_4
+\frac{893}{7875}M_2 +\frac{1}{42000}M_2M_3 + \frac{227}{2268000} M_5;
\end{gathered}
\end{equation}
highly accurate numerical values are listed in {Table~{\rm\ref{tab:numerical}}}.
\end{corollary}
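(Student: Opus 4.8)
The plan is to treat the second moment in exactly the way Theorem~\ref{thm:Eexpand} treated the first, and then to subtract the square of the already-known expansion of $\E(L_n)$. Starting from the shift-and-rescale identity recorded just above, namely
\[
\Var(L_n) = n^{1/3}\sum_{l=1}^n t_{l-1/2}(n)^2\,\prob(L_n=l) - \Big(\E(L_n) - 2\sqrt{n} - \tfrac12\Big)^2,
\]
I would first insert the uniform-tails expansion \eqref{eq:PDFexpan2} of $n^{1/6}\prob(L_n=l)$ into the sum. Writing $F_0^*:=F'$, and noting that the error term in \eqref{eq:PDFexpan2} carries the factor $e^{-c|t|^{3/5}}$ while the tail bounds \eqref{eq:Fjtail} control the $F_j^*$, the whole sum is uniformly summable against the weight $t_{l-1/2}(n)^2$, so it expands as $\sum_{j=0}^m n^{-j/3}\,\sigma_j^{(n)} + O(n^{-(m+1)/3})$ with $\sigma_j^{(n)} := n^{-1/6}\sum_{l=1}^n t_{l-1/2}(n)^2 F_j^*(t_{l-1/2}(n))$.

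Since $t_{l-1/2}(n) = (l-\tfrac12-2\sqrt n)\,n^{-1/6}$ is affine in $l$ with step $h=n^{-1/6}$, each $\sigma_j^{(n)}$ is a rectangular-rule sum; the tail estimates \eqref{eq:Fjtail} allow extending it to a bi-infinite sum at the cost of an $O(e^{-cn^{1/3}})$ error, and then the uniform strip hypothesis together with the classical rectangular-rule error bound ($O(e^{-\pi s/h})$) gives $\sigma_j^{(n)} = \sigma_j + O(e^{-cn^{1/6}})$, where $\sigma_j := \int_{-\infty}^\infty t^2 F_j^*(t)\,dt$ and $\sigma_0 = M_2$. Hence $n^{1/3}\sum_l t_{l-1/2}(n)^2\prob(L_n=l) = \sum_{j=0}^m \sigma_j\,n^{(1-j)/3} + O(n^{-m/3})$. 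For the second piece I would take the expansion $\E(L_n) = 2\sqrt n + \tfrac12 + \sum_{i=0}^m \mu_i\,n^{(1-2i)/6} + O(n^{-(1+2m)/6})$ from Theorem~\ref{thm:Eexpand} and square it, tracking orders: the dominant error in the square is $2\mu_0 n^{1/6}\cdot O(n^{-(1+2m)/6}) = O(n^{-m/3})$, so $\big(\E(L_n) - 2\sqrt n - \tfrac12\big)^2 = \sum_{k=0}^m \big(\sum_{i+i'=k}\mu_i\mu_{i'}\big) n^{(1-k)/3} + O(n^{-m/3})$. Subtracting, all half-integer powers of $n$ cancel and one reaches the asserted form \eqref{eq:Vexpand}, with $\nu_0 = M_2 - \mu_0^2 = -M_1^2 + M_2$, $\nu_1 = \sigma_1 - 2\mu_0\mu_1$, $\nu_2 = \sigma_2 - (2\mu_0\mu_2 + \mu_1^2)$, $\nu_3 = \sigma_3 - 2(\mu_0\mu_3 + \mu_1\mu_2)$, and so on.

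It then remains to put $\nu_1,\nu_2,\nu_3$ into closed form. Here I would substitute the explicit functional forms \eqref{eq:F22star} of $F_1^*$, $F_2^*$ (and \eqref{eq:F3star} of $F_3^*$) into $\sigma_j = \int t^2 F_j^*(t)\,dt$ and reduce the resulting integrals with the integration-by-parts rule $\int t^j F^{(k)}(t)\,dt = (-1)^{k-1}\,\tfrac{j!}{(j-k+1)!}\,M_{j-k+1}$ for $1\leq k\leq j+1$ (and $=0$ otherwise), exactly as in the derivation of \eqref{eq:mu2}; combining with the values $\mu_0=M_1$, $\mu_1=\tfrac1{60}M_2$, $\mu_2=\tfrac{89}{350}-\tfrac1{1400}M_3$ of Theorem~\ref{thm:Eexpand} then produces \eqref{eq:nu2} and \eqref{eq:nu3}. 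I do not expect a conceptual obstacle; the only genuine work is the bookkeeping that keeps every error term uniformly summable in $l$ and preserves the claimed $O(n^{-m/3})$ accuracy through the squaring of the $\E(L_n)$-series — which is precisely where the uniform tails and uniform strip hypotheses enter.
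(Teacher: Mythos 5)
Your plan is correct and follows the same route as the paper, which simply says to insert the expansions of $n^{1/6}\prob(L_n=l)$ and of $\E(L_n)$ into the identity $\Var(L_n)=n^{1/3}\sum_l t_{l-1/2}(n)^2\,\prob(L_n=l)-\big(\E(L_n)-2\sqrt n-\tfrac12\big)^2$ and "argue as for Thm.~\ref{thm:Eexpand}"; your bookkeeping ($\nu_j=\sigma_j-\sum_{i+i'=j}\mu_i\mu_{i'}$ with $\sigma_j=\int t^2F_j^*\,dt$, the $O(n^{-m/3})$ error budget from squaring the $\E(L_n)$-series, and the rectangular-rule/tail arguments) is exactly what that phrase elides, and a check of $\nu_1,\nu_2$ against \eqref{eq:nu2} confirms the arithmetic.
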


The expansions of expected value and variance can be cross-validated by looking at the numerical values for the coefficients $\mu_1,\mu_2,\mu_3$ and $\nu_1,\nu_2,\nu_3$ that we predicted in \cite[§4.3]{arxiv.2206.09411}: those values were computed by fitting, in high precision arithmetic,  expansions (back then only conjectured) of the form \eqref{eq:Eexpand} with $m=9$ 
and \eqref{eq:Vexpand} with $m=8$ to the exact tabulated data for $n=500,\ldots,1000$. A decision about which digits were to be considered correct was made by comparing the result against a similar computation for $n=600,\ldots,1000$. As it turns out, the predictions of \cite[§4.3]{arxiv.2206.09411} agree to all the decimal places shown there (that is, to $7$, $7$, $6$ and $9$, $6$, $4$ places) with the theory-based, highly accurate values given in Table~\ref{tab:numerical}. 

{
\subsection*{Acknowledgements} I would like to thank the Isaac Newton Institute for Mathematical Sciences, Cambridge (UK), for support and hospitality during the 2022 program “Applicable resurgent asymptotics: towards a universal theory (ARA2)” where work on the present paper was undertaken. This work was supported by EPSRC Grant No EP/R014604/1.} 

\setcounter{section}{0}
\renewcommand{\thesection}{\Alph{section}} 

\part*{Appendices}
\section{Variations on the Saddle Point Method}

\subsection{Analytic de-Poissonization and the Jasz expansion}\label{sect:jasz}

In their comprehensive 1998 memoir \cite{MR1625392}, Jacquet and Szpankowski gave a detailed study of what they termed {\em analytic de-Poissonization} (in form of a useful repackaging of the saddle point method), proving a selection of asymptotic expansions and applying them to various asymptotic problems in analytic algorithmics and combinatorics (with generating functions given in terms of functional equations amenable for checking the Tauberian growth conditions in the complex plane). Expositions with a selection of further applications can be found in \cite[Sect.~7.2]{MR3524836} and \cite[Chap.~10]{MR1816272}.

\subsubsection*{Formal derivation of the Jasz expansion} Following the ideas of \cite[Remark~3]{MR1625392} let us start with a purely formal derivation to motivate the algebraic form of the expansion. Suppose that the Poisson generating function
\[
P(z) = e^{-z}\sum_{n=0}^\infty a_n \frac{z^n}{n!}
\]
of a sequence $a_n$ is an entire function and consider some $r>0$. If we write the power series expansion of $P(z)$, centered at $z=r$, in the operator form
\[
P(z) = e^{(z-r)D} P(r),
\]
where $D$ denotes differentiation w.r.t. the variable $r$, we get by Cauchy's formula (with a contour encircling $z=0$ counter-clockwise with index one)
\begin{equation}\label{eq:formal1}
a_n = \frac{n!}{2\pi i} \oint P(z) e^z \frac{dz}{z^{n+1}} =  e^{-rD} \left(\frac{n!}{2\pi i} \oint e^{z(D+1)} \frac{dz}{z^{n+1}}\right) P(r) = e^{-rD} (D+1)^n P(r).
\end{equation}
By the Cauchy product of power series
\begin{equation}\label{eq:cj}
e^{-r x} (x+1)^n = \sum_{j=0}^\infty c_j(n;r) x^j,\quad c_j(n;r):= \sum_{k=0}^j \binom{n}{k}\frac{(-r)^{j-k}}{(j-k)!},
\end{equation}
we get from \eqref{eq:formal1} the formal expansion
\begin{equation}\label{eq:formal2}
a_n \sim \sum_{j=0}^\infty c_j(n;r) P^{(j)}(r).
\end{equation}
Note that the coefficients $c_j(n;r)$ are polynomials of degree $j$ in $n$ and $r$. From \eqref{eq:cj} one easily verifies that they satisfy the three-term recurrence
\[
(j+1) c_{j+1}(n;r) + (j+r-n) c_j(n;r) +r c_{j-1}(n;r) = 0\qquad (j=0,1,2,\ldots)
\]
with initial data $c_0(n;r) =1$ and $c_1(n;r) = n-r$.

\begin{remark} From \eqref{eq:cj} and \cite[§2.81]{MR0372517} one immediately gets that the $c_j(n;r)$ are, up to normalization, the {\em Poisson--Charlier polynomials}\/: 
\[
\sum_{n=0}^\infty c_j(n;r) c_k(n;r) \frac{e^{-r}r^n}{n!} = \delta_{jk} \frac{r^j}{j!}\qquad (j,k = 0,1,2,\ldots),
\]
so that they are orthogonal w.r.t. the Poisson distribution of intensity $r>0$.
In particular, \cite[Eq.~(2.81.6)]{MR0372517} gives (with $L^{(\nu)}_k(x)$ the Laguerre polynomials) the representation
\[
c_j(n;r) = L_j^{(n-j)}(r).
\]
\end{remark}

Things simplify for the particular choice $r=n$ which is suggested by the expected value of the Poisson distribution (cf.~Lemma~\ref{lem:johan}). The corresponding polynomials $b_j(n):=c_j(n;n)$, which we call the {\em diagonal Poisson--Charlier polynomials}, satisfy 
the three-term recurrence
\begin{equation}\label{eq:bj}
\begin{gathered}
b_0(n)=1,\quad
b_1(n)=0,\\*[2mm]
(j+1) b_{j+1}(n) + j b_j(n) + n b_{j-1}(n) = 0\quad (j=0,1,2,\ldots).
\end{gathered}
\end{equation}
From this we infer inductively that
\[
b_j(0)=0,\quad \deg b_j \leq \lfloor j/2\rfloor \qquad (j=1,2,\ldots).
\]
Now the formal expansion \eqref{eq:formal1} becomes what is dubbed the {\em Jasz expansion} in \cite[§VIII.18]{MR2483235}:
\begin{equation}\label{eq:jaszformal}
\begin{aligned}
a_n &\sim P(n)  + \sum_{j=2}^\infty b_j(n) P^{(j)}(n) \\*[1mm]
&= P(n) - \frac{n}{2} P''(n) + \frac{n}{3}P'''(n) + \left(\frac{n^2}{8} - \frac{n}{4}\right) P^{(4)}(n) \\*[1mm]
& \quad\qquad + \left(-\frac{n^2}{6} + \frac{n}{5}\right) P^{(5)}(n) 
+ \left(-\frac{n^3}{48} + \frac{13n^2}{72}- \frac{n}{6}\right) P^{(6)}(n)+ \cdots.
\end{aligned}
\end{equation}

\subsubsection*{Diagonal analytic de-Poissonization} Jacquet and Szpankowski were able to prove that the expansion \eqref{eq:jaszformal} can be made rigorous if the Poisson generating function satisfies a Tauberian condition in form of a growth condition at the essential singularity at $z=\infty$ in the complex plane. In fact, this can be cast to accomodate the needs of double scaling limits in a uniform fashion: for a two-parameter family of coefficients $a_{n,k}$ one expands the diagonal term $a_{n,n}$ by, first, applying the Jasz expansion w.r.t. to $n$ for $k$ fixed and, then, selecting $k=n$ only afterwards (a process that is called {\em diagonal de-Poissonization} in \cite{MR1625392}).

The following theorem is a particular case of \cite[Thm.~4]{MR1625392} (with $\Psi=1$ and the modifications discussed preceding \cite[Eq.~(27)]{MR1625392}). It repackages the saddle point method (cf. \cite[Chap.~5]{MR671583} and \cite[Chap.~VI]{MR0460820}) for the asymptotic evaluation of the Cauchy integral
\begin{equation}\label{eq:Cauchy}
a_{n} = \frac{n!}{2\pi i} \oint P(z) e^z \frac{dz}{z^{n+1}} 
\end{equation}
in a far more directly applicable fashion. Concerning the asserted uniform bounds of the implied constants, see the beginning of \cite[§5.2]{MR1625392}.

\begin{theorem}[Jacquet--Szpankowski 1998]\label{thm:jasz} Let a family of entire Poisson generating functions of the form
\[
P_k(z) = e^{-z} \sum_{n=0}^\infty a_{n,k} \frac{z^n}{n!} \qquad (k=0,1,2,\ldots)
\]
satisfy the following two conditions\/\footnote{Here, (I) means ``inside'' and (O) ``outside'' with respect to the ``polynomial cone'' $\{z=re^{i\theta} : |\theta| \leq D r^{-\delta}\}$.} for $n\geq n_0$ where $A, B,C, D$, $\alpha,\beta,\gamma,\delta$ are some constants with $A, \alpha >0$ and $0\leq \delta < 1/2$\/{\em :}
\begin{itemize}\itemsep=5pt
\item[(I)] If $|r - n| \leq D n^{1-\delta}$ and $|\theta| \leq Dr^{-\delta}$ then $|P_n(r e^{i\theta})| \leq B n^\beta$.
\item[(O)] If $|\theta|>Dn^{-\delta}$ then $|P_n(n e^{i\theta}) \exp(n e^{i\theta})| \leq C n^\gamma \exp(n - An^\alpha)$.
\end{itemize}
Then, for any $m = 0,1,2,\ldots$ there holds, when $n\geq n_1$ with $n_1$ large enough,
\begin{equation}\label{eq:jasz}
a_{n,n} = P_n(n)  + \sum_{j=2}^m b_j(n) P_n^{(j)}(n) + O\big(n^{\beta - (m+1)(1-2\delta)}\big),
\end{equation}
where the $b_j(n)$ are the diagonal Poisson--Charlier polynomials \eqref{eq:bj} which have degree $\leq \lfloor j/2\rfloor$ and satisfy $b_j(0)=0\; (j\geq 1)$. The implied constant in \eqref{eq:jasz} and the constant $n_1$ depend only on $n_0$ and the constants entering the conditions {\rm (I)} and {\rm (O)}.
\end{theorem}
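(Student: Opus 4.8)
The result is the diagonal case of \cite[Thm.~4]{MR1625392}, so the plan is to recall how it issues from the saddle point method applied to the Cauchy integral \eqref{eq:Cauchy}, deferring the sharpest part of the error bookkeeping to \cite[\S5.2]{MR1625392}. First I would apply \eqref{eq:Cauchy} to the member $P_n$ with the contour taken to be the circle $|z|=n$ (whose radius is the saddle of $e^z z^{-n}$), set $z=ne^{i\theta}$, and use $e^z z^{-n}=e^n n^{-n}\exp\big(n(e^{i\theta}-1-i\theta)\big)$ together with Stirling's formula $n!\,e^n n^{-n}=\sqrt{2\pi n}\,(1+O(n^{-1}))$ to rewrite
\[
a_{n,n}=\frac{n!\,e^n}{2\pi n^n}\int_{-\pi}^{\pi}P_n(ne^{i\theta})\,e^{n(e^{i\theta}-1-i\theta)}\,d\theta .
\]
Then I would split the range of $\theta$ into the \emph{central arc} $|\theta|\le\theta_0:=Dn^{-\delta}$ and its complement. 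On the complement condition (O) bounds $|P_n(ne^{i\theta})\exp(ne^{i\theta})|$ by $Cn^\gamma e^{n-An^\alpha}$, while the extra factor $|e^{n(\cos\theta-1)}|\le e^{-cn\theta_0^2}=e^{-cD^2 n^{1-2\delta}}$ — growing because $0\le\delta<1/2$ — shows this part of the integral is smaller than every negative power of $n$, hence is absorbed into the error term of \eqref{eq:jasz}.

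On the central arc I would Taylor-expand $P_n(z)$ about $z=n$ to some order $M=M(m)$ (chosen large, of size comparable to $m$) with Cauchy remainder $R_M$. Condition (I) says $P_n$ is bounded by $Bn^\beta$ on a disc of radius $\rho\asymp n^{1-\delta}$ about $n$ (inscribed in the polynomial cone), so Cauchy's estimates give $|P_n^{(j)}(n)|=O\big(n^{\beta-j(1-\delta)}\big)$ and $|R_M(ne^{i\theta})|\le Cn^\beta\rho^{-(M+1)}(n|\theta|)^{M+1}$ on the arc. Adding back the (exponentially small) tail contribution of each monomial so as to integrate over the full circle, each power $(z-n)^j$ contributes
\[
\frac{n!}{2\pi i}\oint_{|z|=n}(z-n)^j\,e^z z^{-n-1}\,dz=j!\,b_j(n);
\]
I would check this by differentiating $(z-n)^j e^z z^{-n}$ and integrating around the circle, which reproduces exactly the three-term recurrence \eqref{eq:bj} with $b_0=1$, $b_1=0$ (this is why the sum in \eqref{eq:jasz} starts at $j=2$). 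Hence the Taylor-polynomial part of the integral equals $\sum_{j=0}^{M}b_j(n)P_n^{(j)}(n)$, while the remainder part, after bounding $|e^{n(\cos\theta-1)}|\le e^{-cn\theta^2}$ and evaluating the Gaussian moment integrals $\int|\theta|^{M+1}e^{-cn\theta^2}\,d\theta\asymp n^{-(M+2)/2}$, comes out of size $O\big(n^{\beta-(M+1)(1-2\delta)/2}\big)$.

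Finally I would discard the terms $b_j(n)P_n^{(j)}(n)$ with $m<j\le M$ into the error, using $\deg b_j\le\lfloor j/2\rfloor$ and $b_1=0$ together with the above bounds on $P_n^{(j)}(n)$ to see that, once $M$ is taken large enough relative to $m$, the discarded terms and the Taylor remainder are all $O\big(n^{\beta-(m+1)(1-2\delta)}\big)$. I expect this last step to be the real obstacle: the naive term-by-term estimate is not quite sharp, so one must control the truncation error and the Taylor remainder \emph{jointly} — and, crucially, uniformly over the whole family of Poisson generating functions, with the implied constant depending only on $n_0$ and the constants entering (I) and (O). This is precisely the analysis carried out in \cite[\S5.2]{MR1625392}, on which I would lean for the details and for the asserted uniformity of the implied constants.
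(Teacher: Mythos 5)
The paper does not prove Theorem~\ref{thm:jasz} itself: it imports it wholesale, with the text ``a particular case of \cite[Thm.~4]{MR1625392} (with $\Psi=1$ and the modifications discussed preceding \cite[Eq.~(27)]{MR1625392})'' and a pointer to \cite[\S5.2]{MR1625392} for the uniformity of the implied constants. Your proposal ultimately does the same — it defers to exactly that reference for the sharp error bookkeeping — but in addition sketches the underlying saddle-point mechanism, so it is worth checking the sketch for accuracy.

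The outline is sound. The contour at radius $n$, the split of the circle at $|\theta|=Dn^{-\delta}$, the superpolynomial smallness of the outer contribution via condition~(O), the Taylor expansion of $P_n$ about $z=n$ to a high order $M$ controlled by condition~(I) via Cauchy's inequalities, and the absorption of the tail of each monomial to integrate over the full circle, all match how \cite[Thm.~4]{MR1625392} is proved. The identity $\frac{n!}{2\pi i}\oint_{|z|=n}(z-n)^j\,e^z z^{-n-1}\,dz=j!\,b_j(n)$ is correct, and your proposed verification — differentiate $(z-n)^je^zz^{-n}$, integrate around the circle, and observe that the resulting recurrence $J_{j+1}+jJ_j+jnJ_{j-1}=0$ with $J_0=1,\,J_1=0$ is just \eqref{eq:bj} rescaled by $j!$ — works out as claimed. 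You also correctly flag the delicate step: the naive term-by-term bound $|b_j(n)P_n^{(j)}(n)|=O\big(n^{\beta-j(1-2\delta)/2}\big)$ and the Taylor-remainder estimate of the same shape do not, alone, produce the stated rate $O\big(n^{\beta-(m+1)(1-2\delta)}\big)$; this is precisely where the joint accounting in \cite[\S5.2]{MR1625392} is needed, and deferring to it is legitimate (the paper does the same and, for the application in Sect.~\ref{sect:main}, the precise exponent is in any case immaterial as long as the error decays).

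One minor imprecision in the outer-arc estimate: once you have rewritten the Cauchy integral with the factor $e^{n(e^{i\theta}-1-i\theta)}$, the factor $e^{n(\cos\theta-1)}$ is already what condition~(O) is calibrated to cancel, since (O) bounds the product $|P_n(ne^{i\theta})\exp(ne^{i\theta})|$. Presenting $e^{n(\cos\theta-1)}$ as an independent ``extra'' decay factor double-counts; the correct reading is that combining (O) with the $e^{-n}$ pulled out of the Stirling normalization leaves $O(n^\gamma e^{-An^\alpha})$, still superpolynomially small, so the conclusion stands. This is a cosmetic issue, not a gap.
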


\begin{example}\label{ex:jaszP4} In the proof of Thm.~\ref{thm:lengthdistexpan} we use Thm.~\ref{thm:jasz} in the particular case $\beta = 0$, $\delta=2/5$ for a family of Poisson generating functions with (cf.~\eqref{eq:Pnnlead})
\[
P_n^{(j)}(n) = O(n^{-2j/3}).
\]
For $m=6$ the expansion \eqref{eq:jasz} is then given by the terms shown in \eqref{eq:jaszformal} up to an error of order $O(n^{-7/5})$, that is,
\begin{multline*}
a_{n,n} = P_n(n) - \frac{n}{2} P_n''(n) + \frac{n}{3}P_n'''(n) + \left(\frac{n^2}{8} - \frac{n}{4}\right) P_n^{(4)}(n) \\*[1mm]
+ \left(-\frac{n^2}{6} + \frac{n}{5}\right) P_n^{(5)}(n) 
+ \left(-\frac{n^3}{48} + \frac{13n^2}{72}- \frac{n}{6}\right) P_n^{(6)}(n)+ O(n^{-7/5}).
\end{multline*}
Upon relaxing the error to $O(n^{-4/3})$ and keeping only those terms which do not get absorbed in the error term,
the Jasz expansion then simplifies to
\begin{equation}\label{eq:jaszP4}
a_{n,n} = P_n(n) - \frac{n}{2} P_n''(n) + \frac{n}{3} P_n'''(n) + \frac{n^2}{8} P_n^{(4)}(n) - \frac{n^3}{48}P_n^{(6)}(n) + O(n^{-4/3}).
\end{equation}
\end{example}

\subsection{\boldmath$H$-admissibility\unboldmath\ and Hayman's Theorem XI}\label{sect:hayman}
In his 1956 memoir \cite{Hayman56} on a generalization of Stirling's formula, Hayman gave a related but different repackaging of the saddle point method for the asymptotic evaluation of the Cauchy integral \eqref{eq:Cauchy} by introducing the notion of $H$-admissible functions. We collect estimates given in course of the proofs of some of Hayman's theorems that will help us to establish the conditions (I) and (O) required for applying analytic de-Poissonization in form of Thm.~\ref{thm:jasz}.

\begin{definition}[Hayman \protect{\cite[p.~68]{Hayman56}}]\label{def:Hayman} An entire function $f(z)$ is said to be {\em $H$-admissible} if the following four conditions are satisfied:
\begin{itemize}
\item[--] [{\em positivity}\/] for sufficiently large $r>0$, there holds $f(r)>0$; inducing there the real functions
(which we call the auxiliary functions associated with $f$)
\[
a(r) = r \frac{f'(r)}{f(r)},\qquad b(r) = r a'(r);
\]
by Hadamard's convexity theorem $a(r)$ is monotonely increasing and $b(r)$ is positive.\\*[-3mm]
\item[--]  [{\em capture}\/] $b(r) \to \infty$ as $r\to\infty$;\\*[-3mm]
\item[--]  [{\em locality}\/] for some function $0<\theta_0(r)<\pi$ there holds\footnote{As is customary in asymyptotic analysis in the complex plane, we understand such asymptotics (and similar expansions with $o$- or $O$-terms) to hold {\em uniformly} in the stated angular segments for all $r\geq r_0$ with some sufficiently large $r_0>0$.}
\[
f(r e^{i\theta}) = f(r) e^{i\theta a(r) - \theta^2 b(r)/2}\, (1+ o(1)) \qquad (r\to\infty,\; |\theta|\leq \theta_0(r));
\]
\item[--]  [{\em decay}\/] for the angles in the complement there holds
\[
f(r e^{i\theta}) = \frac{o(f(r))}{\sqrt{b(r)}}\qquad (r\to\infty,\; \theta_0(r) \leq |\theta| \leq \pi).
\]
\end{itemize}
\end{definition}

Instead of providing an asymptotic expansion (with an additive error term) as in Thm.~\ref{thm:jasz}, $H$-admissibility gives just a versatile leading order term of $a_n$ in form of a normal approximation.
 However, the error term is {\em multiplicative} then. 
\begin{theorem}[Hayman \protect{\cite[Thm.~I, Cor.~II]{Hayman56}}]\label{thm:hayman} Let $f$ 
be an entire $H$-admissible function with Maclaurin series
\[
f(z) = \sum_{n=0}^\infty a_nz^n\qquad (z\in \C).
\]
Then: 
\begin{itemize}
\item[I.]  {\em [normal approximation]} There holds, uniformly in $n \in \N_0=\{0,1,2,\ldots\}$, that
\begin{equation}\label{eq:CLT}
\frac{a_n r^n}{f(r)} = \frac{1}{\sqrt{2\pi b(r)}}\left(\exp\left(-\frac{(n-a(r))^2}{2b(r)}\right)+ o(1)\right) \qquad (r\to\infty).
\end{equation}

\item[II.]  {\em [Stirling-type formula]} For $n$ sufficiently large, it follows from the positivity and capture conditions of $H$-admissibility that $a(r_n) = n$ has a unique solution $r_n$ such that $r_n\to\infty$ as $n\to\infty$ and therefore, by the normal approximation \eqref{eq:CLT}, there holds
\begin{equation}\label{eq:stirling}
a_n = \frac{f(r_n)}{r_n^n \sqrt{2\pi b(r_n)}} (1+ o(1)) \qquad (n\to\infty).
\end{equation}
\end{itemize}
\end{theorem}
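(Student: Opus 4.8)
The plan is to evaluate $a_n$ by the saddle point method applied to Cauchy's formula, placing the saddle on the positive real axis at radius $r$, and to read off the normal approximation directly from the locality and decay conditions built into $H$-admissibility (Definition~\ref{def:Hayman}). First I would write, for $r>0$ large enough that $f(r)>0$,
\[
\frac{a_n r^n}{f(r)} = \frac{1}{2\pi f(r)} \int_{-\pi}^{\pi} f(re^{i\theta})\, e^{-in\theta}\, d\theta,
\]
and split the contour into a \emph{major arc} $|\theta|\leq \theta_0(r)$ and a \emph{minor arc} $\theta_0(r)\leq |\theta|\leq \pi$, with $\theta_0$ the angular cut-off from the definition. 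On the minor arc the decay condition gives $|f(re^{i\theta})| = o\big(f(r)/\sqrt{b(r)}\big)$ uniformly, so—since the arc has length at most $2\pi$ and $|e^{-in\theta}|=1$—this part contributes $o\big(1/\sqrt{b(r)}\big)$ to $a_n r^n/f(r)$, uniformly in $n$.

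Next, on the major arc I would insert the locality asymptotic $f(re^{i\theta}) = f(r)\, e^{i\theta a(r) - \theta^2 b(r)/2}(1+o(1))$, reducing the major-arc contribution of $a_n r^n/f(r)$ to $\frac{1+o(1)}{2\pi}\int_{-\theta_0(r)}^{\theta_0(r)} e^{i\theta(a(r)-n) - \theta^2 b(r)/2}\, d\theta$. Completing the square, the integrand equals $e^{-(n-a(r))^2/(2b(r))}$ times a Gaussian in $\theta$; extending the range to all of $\mathbb{R}$ and shifting the contour of integration gives the exact value $\sqrt{2\pi/b(r)}\, e^{-(n-a(r))^2/(2b(r))}$. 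The error from this extension is bounded by the Gaussian tail $\int_{|\theta|>\theta_0(r)} e^{-\theta^2 b(r)/2}\, d\theta = o\big(1/\sqrt{b(r)}\big)$, which is legitimate precisely because $H$-admissibility forces (after, if necessary, shrinking $\theta_0$ as in Hayman's treatment \cite{Hayman56}) $\theta_0(r)\to0$ together with $b(r)\theta_0(r)^2\to\infty$; since the factor $e^{-(n-a(r))^2/(2b(r))}\leq 1$, every estimate stays uniform in $n$. Collecting the two arcs yields the normal approximation \eqref{eq:CLT}.

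For part~II, the positivity condition and Hadamard's convexity theorem make $a(r)$ strictly increasing for large $r$, while the capture condition $b(r)=r a'(r)\to\infty$ gives $a'(r)=b(r)/r\geq 1/r$ eventually, so $a(r)\to\infty$; hence for every sufficiently large $n$ there is a unique $r_n$ with $a(r_n)=n$, and $r_n\to\infty$. Substituting $r=r_n$ into \eqref{eq:CLT} kills the Gaussian exponent, $e^{-(n-a(r_n))^2/(2b(r_n))}=1$, and \eqref{eq:stirling} follows immediately.

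The main obstacle I anticipate is the simultaneous bookkeeping on the major arc: one must choose (or shrink) the cut-off $\theta_0(r)$ so that the locality expansion is still valid on $|\theta|\leq\theta_0(r)$ while at the same time $b(r)\theta_0(r)^2\to\infty$, so that the omitted Gaussian tails are genuinely negligible relative to $1/\sqrt{b(r)}$—and all of this uniformly in $n$. This is exactly the technical lemma underlying Hayman's argument; once it is in place, the minor-arc bound, completing the square, and the convexity argument for part~II are routine.
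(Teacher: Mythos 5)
The paper does not prove this theorem itself—it cites Hayman's 1956 memoir—and your proposal reproduces exactly Hayman's saddle-point argument: Cauchy integral, major/minor arc split along the $H$-admissibility cut-off $\theta_0(r)$, insertion of the locality Gaussian, completion of the square, and the monotonicity/capture argument for part II. The one technical pivot you correctly flag (choosing $\theta_0(r)\to 0$ with $b(r)\theta_0(r)^2\to\infty$ so that the locality estimate holds and the omitted Gaussian tails are $o(1/\sqrt{b(r)})$, uniformly in $n$) is precisely the lemma Hayman establishes, so the proposal is correct and takes the same route as the cited source.
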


For the probabilistic content of the normal approximation \eqref{eq:CLT} see, e.g., \cite{MR2095975} and \cite[Remark~2.1]{arxiv.2206.09411}.

We observe the similarity of the locality and decay conditions to the conditions (I) and (O) in the Jacquet--Szpankowski Thm.~\ref{thm:jasz}. In fact, in establishing the $H$-admissibility of certain families of functions, Hayman proved estimates that allow us to infer the validity of conditions (I) and (O). A striking example is given by the following theorem, which gives {\em uniform} bounds for a class of functions that is of particular interest to our study. 

\begin{theorem}[Hayman \protect{\cite[Thm.~XI]{Hayman56}}]\label{thm:genuszero} Let $f$ be an entire function of genus zero, having for some $\epsilon>0$ no zeros in the sector $|\!\arg z| \leq \pi/2+ \epsilon$. If $f$ satisfies the positivity condition of Def.~{\rm\ref{def:Hayman}}, then there is the universal bound
\begin{equation}\label{eq:genuszero}
\big| f(re^{i\theta}) \big| \leq 
\begin{cases}
2f(r) e^{-\frac{1}{2}\theta^2 b(r)}, &\qquad 0\leq |\theta| \leq b(r)^{-2/5},\\*[2mm]
2f(r) e^{-\frac{1}{2}b(r)^{1/5}}, &\qquad b(r)^{-2/5}\leq |\theta| \leq \pi,
\end{cases}
\end{equation}
which is valid when $b(r)$ is large enough to ensure $8b(r)^{-1/5} \csc^2(\epsilon/2)\csc(\epsilon) \leq \log 2$. Hence, if $f$ also satisfies the capture condition of Def.~{\rm\ref{def:Hayman}}, then it is $H$-admissible.
\end{theorem}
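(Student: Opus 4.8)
The plan is to follow Hayman's argument, organized around the Hadamard product and one elementary global estimate that is then refined near $\theta = 0$. First the setup. Since $f(r) > 0$ for all large $r$, $f$ is real on a ray, hence has real Maclaurin coefficients (identity theorem), so its zeros come in complex‑conjugate pairs or are real — and, lying outside the sector, the real ones are negative. Factoring out a possible monomial $z^m$, which changes neither side of \eqref{eq:genuszero} nor $b(r)$, genus zero gives $f(z) = c\prod_n(1 - z/a_n)$ with $c > 0$, $\sum_n |a_n|^{-1} < \infty$, and $\pi/2 + \epsilon \le |\arg a_n| \le \pi$. Put $\zeta_n := r/a_n = \rho_n e^{i\mu_n}$, so $\rho_n = r/|a_n|$, $|\mu_n| \ge \pi/2 + \epsilon$, and $\operatorname{Re}\zeta_n \le -\rho_n\sin\epsilon$; the sector hypothesis enters only through lower bounds on $|1 - \zeta_n|$ (in particular $|1-\zeta_n| \ge \max(1,(1+\rho_n)/\sqrt2)$, plus finer ones carrying the factors $\csc(\epsilon/2)$, $\csc\epsilon$). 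From $f'/f(z) = \sum_n (z - a_n)^{-1}$ one obtains, with all sums real by conjugate symmetry, $a(r) = -\sum_n \zeta_n/(1-\zeta_n)$ and $b(r) = -\sum_n \zeta_n/(1-\zeta_n)^2 > 0$, and, on taking $\log|\cdot|$ of the product,
\[
h(\theta) := \log\frac{|f(re^{i\theta})|}{f(r)} = \sum_n g_n(\theta),\qquad g_n(\theta) = \tfrac12\log\frac{1 - 2\rho_n\cos(\mu_n + \theta) + \rho_n^2}{1 - 2\rho_n\cos\mu_n + \rho_n^2},
\]
the series converging locally uniformly.

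The next step is a crude global bound. Using $\log(1+x) \le x$ and $\cos\mu_n - \cos(\mu_n+\theta) = \cos\mu_n(1-\cos\theta) + \sin\mu_n\sin\theta$, one gets $h(\theta) \le (1-\cos\theta)\sum_n \operatorname{Re}\zeta_n/|1-\zeta_n|^2 + \sin\theta \sum_n \operatorname{Im}\zeta_n/|1-\zeta_n|^2$. The second sum vanishes by conjugate pairing; in the first, $\operatorname{Re}\zeta_n \le -\rho_n\sin\epsilon$, while $\sum_n \rho_n/|1-\zeta_n|^2 = \sum_n |\zeta_n/(1-\zeta_n)^2| \ge |\sum_n \zeta_n/(1-\zeta_n)^2| = b(r)$, so that $h(\theta) \le -(1-\cos\theta)\,b(r)\sin\epsilon$ for all $|\theta| \le \pi$. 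A second angular estimate — that $\arg\bigl(\zeta_n/(1-\zeta_n)^2\bigr)$ stays within $\pi/2 - \epsilon$ of $\pi$ (a short check using $\operatorname{Re}(1-\zeta_n) > 0$ and $\operatorname{Im}\zeta_n \ge 0$ for the chosen representative of the pair) — gives the companion bound $\sum_n \rho_n/|1-\zeta_n|^2 \le b(r)/\sin\epsilon$, which is what makes the refinement near $0$ uniform.

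The coefficient $\sin\epsilon$ above is too small for the first branch of \eqref{eq:genuszero}, so there one Taylor‑expands $\psi_n(\theta) := \log(1 - \zeta_n e^{i\theta})$. Using $|1 - \zeta_n e^{i\theta}| \gtrsim_\epsilon |1 - \zeta_n|$ for $|\theta|$ below a constant (the angular gap again keeping $\zeta_n e^{i\theta}$ away from $1$), one has $|\psi_n^{(k)}(\theta)| \le (k-1)!\,\kappa(\epsilon)^k\,|\psi_n''(0)|$, and $\sum_n |\psi_n''(0)| \le b(r)/\sin\epsilon$ by the companion bound. Summing over $n$: $h(0) = 0$, $h'(0) = \operatorname{Re}(ia(r)) = 0$, $h''(0) = -b(r)$, and the Taylor tail from $k \ge 3$ is $\le C(\epsilon)\,b(r)|\theta|^3$, which for $|\theta| \le b(r)^{-2/5}$ equals $C(\epsilon)\,b(r)^{-1/5}$ and is therefore $\le \log 2$ exactly once $8b(r)^{-1/5}\csc^2(\epsilon/2)\csc\epsilon \le \log 2$ (the explicit $8$ and $\csc$‑factors come out of tracking $\kappa$ and the lower bounds on $|1 - \zeta_n e^{i\theta}|$ in terms of $\epsilon$). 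Exponentiating $h(\theta) \le -\tfrac12 b(r)\theta^2 + \log 2$ gives the first branch. For $b(r)^{-2/5} \le |\theta| \le \theta_1(\epsilon)$ the same expansion gives $h(\theta) \le -\tfrac14 b(r)\theta^2 \le -\tfrac14 b(r)^{1/5}$, and for $\theta_1 \le |\theta| \le \pi$ the global bound gives $h(\theta) \le -(1-\cos\theta_1)\,b(r)\sin\epsilon$, which is exponentially below $b(r)^{1/5}$; bookkeeping the constants so that both right‑hand sides are $\le -\tfrac12 b(r)^{1/5} + \log 2$ and match at $\theta = b(r)^{-2/5}$ (as the two branches of \eqref{eq:genuszero} do) yields the second branch. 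Under the capture hypothesis $b(r) \to \infty$, so the threshold eventually holds, $b(r)^{-2/5} \to 0$, and $b(r)^{1/5} \to \infty$; taking $\theta_0(r) = b(r)^{-2/5}$ in Definition \ref{def:Hayman}, \eqref{eq:genuszero} then delivers the locality and decay conditions, and together with positivity and capture this is $H$‑admissibility.

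The main obstacle is uniformity: every constant above must depend on $\epsilon$ (through $\csc(\epsilon/2)$, $\csc\epsilon$) but on nothing else about the zeros beyond the angular gap and $\sum_n |a_n|^{-1} < \infty$. A naive term‑by‑term estimate fails — zeros with $|a_n| \gg r$ or $|a_n| \ll r$ each appear to contribute amounts with no control on their number — so one is forced to route every bound through $b(r)$ itself, which is possible only because conjugate symmetry makes $a(r)$, $b(r)$ real (killing the $\sin\theta$ sum) and because the zero‑free sector keeps $\arg(\zeta_n/(1-\zeta_n)^2)$ a definite distance from $\pm\pi/2$, so that $\sum_n |\zeta_n/(1-\zeta_n)^2|$ is comparable to $|\sum_n \zeta_n/(1-\zeta_n)^2| = b(r)$.
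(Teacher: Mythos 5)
Your Hadamard-product reconstruction of the first branch is sound, and is in fact considerably \emph{more} work than the paper does: the paper simply cites Hayman's Eq.\ (15.6) --- which is exactly the Taylor expansion $\log f(re^{i\theta}) = \log f(r) + i\theta a(r) - \tfrac12\theta^2 b(r) + \epsilon(r,\theta)$ with $|\epsilon(r,\theta)| \le c(\epsilon)\,b(r)|\theta|^3$, $c(\epsilon)=8\csc^2(\epsilon/2)\csc\epsilon$, for $|\theta|\le 1/4$ --- and Hayman's Lemma~8, rather than re-deriving either from the product. The genuine gap is in your handling of the second branch, where you replace Lemma~8 by the global angular bound $h(\theta)\le -(1-\cos\theta)b(r)\sin\epsilon$ together with an intermediate Taylor estimate.

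That substitution does not close under the stated threshold. In the range $b(r)^{-2/5}\le|\theta|\le\theta_1(\epsilon)$ you get $h(\theta)\le -\tfrac14 b(r)\theta^2\le -\tfrac14 b(r)^{1/5}$, but the target is $-\tfrac12 b(r)^{1/5}+\log 2$, and $-\tfrac14 b(r)^{1/5}\le -\tfrac12 b(r)^{1/5}+\log 2$ forces $b(r)^{1/5}\le 4\log 2$, which is exactly the regime the theorem excludes. One can rescue the intermediate range by keeping the undamped Taylor bound $h(\theta)\le -\tfrac12 b(r)\theta^2+\log 2$ out to radius $|\theta|\lesssim\big(\log 2/(c(\epsilon)b(r))\big)^{1/3}$, but beyond that radius you must fall back on the global bound, whose $\sin\epsilon$-damped quadratic matches $-\tfrac12 b(r)^{1/5}+\log 2$ only once $b(r)^{2/15}\gtrsim c(\epsilon)^{2/3}\csc\epsilon/(\log 2)^{2/3}$; comparing exponents, the stated threshold $b(r)^{1/5}\ge c(\epsilon)/\log 2$ implies that extra inequality only if $\sin\epsilon\ge 2$, i.e.\ never. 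So under the theorem's hypotheses the global-bound route provably cannot deliver the second branch with the constant $2$. The missing ingredient is precisely Hayman's Lemma~8, a modulus-monotonicity statement: writing the conjugate-pair factor as $|1-\zeta_n e^{i\theta}|\,|1-\bar\zeta_n e^{i\theta}|=|1-2\zeta_n\cos\theta+\zeta_n^2|$, a quadratic in $c=\cos\theta$ with vertex at $c^*\le -\sin\epsilon$, one checks that for $\theta_0\le 2\epsilon$ its value on $[\theta_0,\pi]$ is dominated by its value at $\theta_0$; hence $|f(re^{i\theta})|\le|f(re^{i\theta_0(r)})|$ for $\theta_0(r)\le|\theta|\le\pi$, and the second branch is literally the first branch evaluated at $\theta_0(r)=b(r)^{-2/5}$. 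The stated threshold already forces $\theta_0(r)\le 2\epsilon$, since $b(r)^{-2/5}=(b(r)^{-1/5})^2\le(c(\epsilon)^{-1}\log 2)^2\le 2\epsilon$, so the constants fall out with no further loss. Without this monotonicity your argument proves a weaker statement, with a strictly more restrictive lower bound on $b(r)$.
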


\begin{proof} Since the bound \eqref{eq:genuszero} is hidden in the two-page long proof of \cite[Thm.~XI]{Hayman56} (only the $H$-admissibility is stated explicitly there), we collect the details here. 
First, \cite[Eq.~(15.6)]{Hayman56} states that, if $|\theta| \leq 1/4$, then
\[
\log f(r e^{i\theta}) = \log f(r) + i\theta a(r) - \frac{1}{2}\theta^2 b(r) + \epsilon(r,\theta)
\]
where the error term is bounded by
\[
|\epsilon(r,\theta)| \leq c(\epsilon) \cdot b(r)\, |\theta|^3, \qquad c(\epsilon):= 8 \csc^2(\epsilon/2)\csc(\epsilon).
\]
Now, for $b(r)$ large enough to ensure
\[
b(r)^{-1/5} \leq c(\epsilon)^{-1}\log 2 \leq\min\big(\sqrt{2\epsilon},1/2\big)
\]
we thus get with $0 < \theta_0(r) := b(r)^{-2/5} \leq \min\big(2\epsilon,1/4\big)$ and $0\leq |\theta| \leq \theta_0(r)$ that
\[
\log \big|f(r e^{i\theta})\big| = \Re \log f(r e^{i\theta}) = \log f(r) - \frac{1}{2}\theta^2 b(r)  + \Re \epsilon(r,\theta),
\quad \big| \Re \epsilon(r,\theta) \big| \leq \log2.
\]
Exponentiation gives, for $0\leq |\theta| \leq \theta_0(r)$,
\[
\big| f(re^{i\theta}) \big| \leq 2f(r) e^{-\frac12\theta^2 b(r)}.
\]
Next, if we combine this estimate with \cite[Lemma~8]{Hayman56} we get, since $\theta_0(r)\leq 2\epsilon$, that 
\[
\big| f(re^{i\theta}) \big| \leq \big| f(re^{i\theta_0(r)}) \big| \leq 2 f(r) e^{-\frac{1}{2} \theta_0(r)^2 b(r)} = 2 f(r) e^{-\frac{1}{2} b(r)^{1/5}} \qquad (\theta_0(r)\leq |\theta|\leq \pi)
\]
which finishes the proof of the universal bound \eqref{eq:genuszero}.
\end{proof}

If, instead of having no zeros in the sector $|\!\arg z| \leq \pi/2+ \epsilon$ at all, the entire function $f$ has a finite number of them, Thm. \ref{thm:genuszero} remains valid but the lower bound on $b(r)$ will now depend on these finitely many zeros. To restore uniformity we consider families of such functions whose zeros satisfy the following tameness condition.

\begin{definition}\label{def:nonresonant}  Let $f_n$ be a family of entire functions such that, for some fixed $\epsilon>0$, each of them has finitely many zeros (listed according to their multiplicities)
\[
z_{n,1},\ldots, z_{n,m_n}
\]
in the sector $|\!\arg z| \leq \pi/2+ \epsilon$, none of them being a positive real number. We call these zeros {\em uniformly tame} (w.r.t. the positive real axis and w.r.t. infinity) if there are some constants $1/5 < \mu\leq 1/3$ and $\nu >0$ such that the family of polynomials
\begin{equation}\label{eq:zeropol}
 p_n(z) = (z-z_{n,1})\cdots(z-z_{n,m_n})
\end{equation}
satisfies
\begin{equation}\label{eq:nonresonant}
\Big(r \frac{d}{dr}\Big)^2 \log p_n(r) = -\sum_{j=1}^{m_n} \frac{r z_{n,j}}{(r -z_{n,j})^2} = O(r^{1 -\mu}), \quad \quad |p_n(r e^{i\theta})| = p_n(r)(1+O(r^{-\nu})),
\end{equation}
uniformly in $n - n^{3/5} \leq r \leq n + n^{3/5}$ as $n\to\infty$. 
\end{definition}

\begin{remark}\label{rem:nonresonant}
Note that a single function $f$ would satisfy condition \eqref{eq:nonresonant} with error terms of the form $O(r^{-1})$ in both places. Therefore the tameness condition allows us to accommodate a significant growth of the implied constants in these $O(r^{-1})$ terms as $n\to\infty$: in the first case because of zeros of $f_n$ getting close to the positive real axis and in the second case because of them getting large.
\end{remark}

\begin{corollary}\label{cor:nonresonant} Let $f_n$ be a family of entire functions of genus zero with positive Maclaurin coefficients such that, for some fixed $\epsilon>0$, each of them has a most finitely many zeros in the sector $|\!\arg z| \leq \pi/2+ \epsilon$. If these zeros are uniformly tame  in the sense of Def.~{\rm\ref{def:nonresonant}} and if the auxiliary functions belonging to $f_n$ satisfy
\begin{equation}\label{eq:bgrowth}
b_n(r) = r + O(r^{2/3}) \qquad (r\to\infty),
\end{equation}
uniformly in $n - n^{3/5} \leq r \leq n + n^{3/5}$ as $n\to\infty$, then there holds the bound
\begin{equation}\label{eq:practicalgenuszero}
\big| f_n(re^{i\theta}) \big| \leq 
\begin{cases}
2f_n(r) e^{-\frac{1}{2}\theta^2 r}, &\qquad 0\leq |\theta| \leq r^{-2/5},\\*[2mm]
2f_n(r) e^{-\frac{1}{2}r^{1/5}}, &\qquad r^{-2/5}\leq |\theta| \leq \pi,
\end{cases}
\end{equation}
for all $n - n^{3/5} \leq r \leq n + n^{3/5}$ and $n\geq n_0$, $n_0$ being sufficiently large. Here $n_0$ depends only on the parameters of the tameness condition and the implied constants in \eqref{eq:nonresonant} and \eqref{eq:bgrowth}.
\end{corollary}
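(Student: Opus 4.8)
The plan is to peel off the finitely many offending zeros of each $f_n$ and reduce to the zero-free situation already covered by Hayman's Theorem~\ref{thm:genuszero}. First I would factor $f_n = p_n\cdot g_n$, where $p_n$ is the monic polynomial \eqref{eq:zeropol} carrying the zeros of $f_n$ in the sector $|\!\arg z|\le \pi/2+\epsilon$ and $g_n:=f_n/p_n$. Deleting finitely many factors from a genus-zero Hadamard product again leaves a function of genus zero (there is no $z^k$-factor to worry about, since $f_n(0)>0$), so $g_n$ is entire of genus zero and has \emph{no} zeros in $|\!\arg z|\le\pi/2+\epsilon$. Moreover $f_n(r)>0$ for $r>0$ because the Maclaurin coefficients of $f_n$ are positive, and $p_n(r)>0$ for $r>0$ because $p_n$ is monic with no positive real root; hence $g_n(r)>0$ for $r>0$, so $g_n$ satisfies the positivity condition of Def.~\ref{def:Hayman} and carries real auxiliary functions $a_{g_n},b_{g_n}$ on $(0,\infty)$.

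The second step is to transport the two hypotheses from $f_n$ to $g_n$. From $\log f_n=\log p_n+\log g_n$ on $(0,\infty)$ one obtains the additivity $b_n(r)=b_{p_n}(r)+b_{g_n}(r)$ with $b_{p_n}(r)=\big(r\tfrac{d}{dr}\big)^2\log p_n(r)$, which is exactly the quantity estimated in the first half of \eqref{eq:nonresonant}: thus $b_{p_n}(r)=O(r^{1-\mu})$. Combined with the growth hypothesis \eqref{eq:bgrowth}, namely $b_n(r)=r+O(r^{2/3})$, and with $\mu>1/5$, this gives $b_{g_n}(r)=r+O(r^{\eta})$, $\eta:=\max\{2/3,\,1-\mu\}<4/5$, uniformly in $n-n^{3/5}\le r\le n+n^{3/5}$. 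In particular $b_{g_n}(r)\to\infty$ uniformly in that window, so the smallness requirement on $b_{g_n}(r)$ in Theorem~\ref{thm:genuszero}—which depends only on $\epsilon$—is met for $n\ge n_0$ with $n_0=n_0(\epsilon)$; I would in fact invoke Theorem~\ref{thm:genuszero} with a fixed $\lambda<\log 2$ in place of $\log 2$ there (this only enlarges $n_0$), so as to bound $|g_n(re^{i\theta})|$ as in \eqref{eq:genuszero} but with constant $e^{\lambda}<2$ rather than $2$, in the two regimes $|\theta|\le b_{g_n}(r)^{-2/5}$ and $b_{g_n}(r)^{-2/5}\le|\theta|\le\pi$.

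Finally I would restore the polynomial factor and rewrite the exponents and cut-off in terms of $r$. Multiplying the bound for $g_n$ by $|p_n(re^{i\theta})|=p_n(r)\,(1+O(r^{-\nu}))$ (the second half of \eqref{eq:nonresonant}) and using $f_n=p_n g_n$ turns the prefactor $e^{\lambda}g_n(r)$ into $e^{\lambda}(1+o(1))f_n(r)$. Since $b_{g_n}(r)=r+O(r^{\eta})$ with $\eta<4/5$, one has $b_{g_n}(r)^{1/5}=r^{1/5}+o(1)$ and, for $|\theta|\le r^{-2/5}$, $\theta^2|b_{g_n}(r)-r|=O(r^{\eta-4/5})=o(1)$; hence $e^{-\frac12\theta^2 b_{g_n}(r)}\le (1+o(1))\,e^{-\frac12\theta^2 r}$ on the first range and $e^{-\frac12 b_{g_n}(r)^{1/5}}\le (1+o(1))\,e^{-\frac12 r^{1/5}}$ on the second. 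The cut-offs $b_{g_n}(r)^{-2/5}$ and $r^{-2/5}$ differ only by a factor $1+o(1)$; on the thin angular sliver between them one sits in a regime where $\theta^2 r\le r^{1/5}$, so the bound there (whichever of the two applies) is $\le (1+o(1))e^{-\frac12 r^{1/5}}\le(1+o(1))e^{-\frac12\theta^2 r}$ and gets folded into the first case of \eqref{eq:practicalgenuszero}. The accumulated constant is then $e^{\lambda}(1+o(1))$, which is $<2$ once $n\ge n_0$ with $n_0$ enlarged accordingly—this is why the slack $\lambda<\log 2$ was reserved. I do not expect any single estimate to be hard; the real work, and the main obstacle, is the uniformity bookkeeping—every $O(\cdot)$ and $o(1)$ above must be controlled purely by $\epsilon$, $\mu$, $\nu$ and the implied constants in \eqref{eq:nonresonant} and \eqref{eq:bgrowth}, which is exactly what Def.~\ref{def:nonresonant} is engineered to deliver (cf. Remark~\ref{rem:nonresonant}).
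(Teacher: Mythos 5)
Your proof is correct and takes essentially the same route as the paper: factor off the monic polynomial $p_n$ carrying the bad zeros, apply Hayman's Theorem~\ref{thm:genuszero} to the zero-free genus-zero factor (reserving slack in the constant—your $e^{\lambda}<2$ plays the same role as the paper's explicit $3/2$), use both halves of the tameness condition \eqref{eq:nonresonant} to transport $b_n$ and the modulus of $p_n$, and absorb all the $1+o(1)$ corrections into the constant for $n\ge n_0$. The only cosmetic difference is that the paper writes the two angular regimes as a single bound $\exp\bigl(-\tfrac12\min(\theta^2 b_n^*(r),\,b_n^*(r)^{1/5})\bigr)$ and replaces the $\min$ uniformly, which sidesteps the explicit sliver bookkeeping you do between the cut-offs $b_{g_n}(r)^{-2/5}$ and $r^{-2/5}$—your sliver estimate is correct anyway, since $\theta^2 r - r^{1/5}=O(r^{\eta-4/5})=o(1)$ there.
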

\begin{proof} Factoring out the finitely many zeros of $f_n$ in the sector $|\!\arg z| \leq \pi/2+ \epsilon$ by using
the polynomials \eqref{eq:zeropol}, we have
\[
f_n(z) = f_n^*(z)\cdot p_n(z) 
\]
where $f_n^*$ is an entire function of genus zero that has no zeros in that sector. Since $f_n(r)>0$ for $r>0$ and the leading coefficient of the polynomial $p_n(z)$ is one, $f_n^*$ satisfies the positivity condition of Def.~\ref{def:Hayman}.
Denoting the auxiliary functions of $f_n^*$ by $a_n^*$ and $b_n^*$, the tameness condition \eqref{eq:nonresonant} yields
\[
b_n(r) = b_n^*(r) + \Big(r \frac{d}{dr}\Big)^2 \log p_n(r)  = b_n^*(r) + O(r^{1-\mu}), \quad |p_n(r e^{i\theta})| = p_n(r)(1+O(r^{-\nu})),
\]
uniformly in $n - n^{3/5} \leq r \leq n + n^{3/5}$ as $n\to\infty$.

By \eqref{eq:bgrowth} this gives $b_n^*(r) = r(1 + O(r^{-\mu}))$, so that by Thm.~\ref{thm:genuszero} (its proof shows that we can take a factor $3/2$ instead of $2$ if $\log 2$ is replaced by $\log(3/2)$ in the lower bound on $b(r)$)
\[
\big| f_n^*(re^{i\theta}) \big| \leq 
\begin{cases}
\tfrac{3}{2}f_n^*(r) e^{-\frac{1}{2}\theta^2 b_n^*(r)}, &\qquad 0\leq |\theta| \leq b_n^*(r)^{-2/5},\\*[2mm]
\tfrac{3}{2}f_n^*(r) e^{-\frac{1}{2}b_n^*(r)^{1/5}}, &\qquad b_n^*(r)^{-2/5}\leq |\theta| \leq \pi,
\end{cases}
\]
for $n$ large enough to ensure $8b_n^*(r)^{-1/5} \csc^2(\epsilon/2)\csc(\epsilon) \leq \log(3/2)$. We write this briefly as
\[
\big| f_n^*(re^{i\theta}) \big| \leq \tfrac{3}{2}f_n^*(r) \exp\big(-\tfrac{1}{2}\min\big(\theta^2 b_n^*(r),b_n^*(r)^{1/5}\big)\big)
\]
for $n - n^{3/5} \leq r \leq n + n^{3/5}$ and $n\geq n_0$ where $n_0$ is large enough (just depending on the parameters and the implied constants in the tameness condition).
If we multiply this bound by 
\[
|p_n(r e^{i\theta})| = p_n(r)(1+O(r^{-\nu}))
\]
 and use $b_n^*(r) = r(1 + O(r^{-\mu}))$
to infer
\[
\min\big(\theta^2 b_n^*(r),b_n^*(r)^{1/5}\big) = \min(\theta^2 r, r^{1/5})\cdot(1+ O(r^{-\mu})) =  \min(\theta^2 r, r^{1/5}) + O(r^{1/5-\mu}),
\]
we obtain the asserted estimate in the compact form
\[
\big| f_n(re^{i\theta}) \big| \leq  2 f_n(r) \exp\big(-\tfrac{1}{2}\min\big(\theta^2 r,r^{1/5}\big)\big)
\]
for $n-n^{3/5}\leq r \leq n + n^{3/5}$ and $n\geq n_0$, where $n_0$ is large enough.
\end{proof}

\subsection{Bessel functions of large order in the transition region}\label{sect:bessel}

In the 1950s F. Olver started a systematic and exhaustive study of asymptotic expansions of the Bessel functions $J_\nu(z)$ for large order $\nu$ and argument $z$. For the transition region\footnote{Where $J_\nu(\nu+\tau \nu^{1/3})$ changes at about $\tau\approx 0$ from being superexponentially small (to the left) to being oscillatory (to the right).} $z= \nu+\tau \nu^{1/3}$ he obtained from applying the saddle point method to integral representations of Sommerfeld's type
  the asymptotic expansion \cite[Eq.~(3.1)]{MR48638} (cf. also \cite[§10.19(iii)]{MR2723248})
\begin{subequations}\label{eq:Olver1952}
\begin{equation}
J_\nu(\nu+\tau \nu^{1/3}) \sim \frac{2^{1/3}}{\nu^{1/3}} \Ai(-2^{1/3}\tau)\sum_{k=0}^\infty \frac{A_k(\tau)}{\nu^{2k/3}} +  \frac{2^{2/3}}{\nu^{1/3}} \Ai'(-2^{1/3}\tau) \sum_{k=1}^\infty \frac{B_k(\tau)}{\nu^{2k/3}}
\end{equation}
valid when $|\!\arg \nu| \leq \pi/2 - \delta < \pi$ with $\tau$ being any {\em fixed} complex number.  Here, $A_k(\tau)$ and $B_k(\tau)$ are certain rational polynomials of increasing degree; the first few are \cite[Eq.~(2.42)]{MR48638}\footnote{Note that we keep the indexing of the polynomials
$B_k$ as in \cite{MR48638}, which differs from \cite[§10.19(iii)]{MR2723248}.}
\begin{gather}
A_0(\tau)=1,\quad A_1(\tau) = -\frac15\tau, \quad A_2(\tau) = -\frac{9}{100}\tau^5 + \frac{3}{35}\tau^2,\\*[1mm]
B_0(\tau)=0,\quad B_1(\tau)=\frac{3}{10}\tau^2,\quad B_2(\tau) = -\frac{17}{70}\tau^3 + \frac{1}{70}. 
\end{gather}
\end{subequations}

\begin{remark}\label{rem:PQcompute} The sequence \cite[Eqs.~(2.10), (2.14), (2.18), (2.38), (2.40)]{MR48638} of formulae in Olver's 1952 paper gives an actual method\footnote{By a Mathematica implementation (for download at \url{https://arxiv.org/abs/2301.02022}) we extended (and reproduced) Olver's original table \cite[Eq.~(2.42)]{MR48638} of $A_0,\ldots,A_n$ and $B_0,\ldots, B_n$ from $n=4$ to $n=100$ in about $10$ minutes computing time. The polynomials $A_{100}$ and $B_{100}$ of degree $250$ and $248$ exhibit rational coefficients that are ratios of integers with up to $410$ digits.} to calculate $A_k(\tau)$ and $B_k(\tau)$ (combining reversion and nesting of power series with recursive formulae). The degrees of $A_k$ are the positive integers congruent to $0,1$ mod $5$ (starting with $\deg A_1=1$) and the degrees of $B_k$ are the positive integers congruent to $2,3$ mod $5$ (starting with $\deg B_1=2$). In both families of polynomials the coefficients of $\tau^m$ are zero when $m$ is not congruent mod~$3$ to the degree. \end{remark}

As stated in \cite[p.~422]{MR48638}, the expansion \eqref{eq:Olver1952} can be repeatedly differentiated with respect to $\tau$, valid under the same conditions. For a modern account of differentiability w.r.t. $\tau$ and $\nu$, adding uniformity for $\tau$ from any compact real set, see the recent work of Sher \cite[Prop.~2.8]{Sher22} which is based on the (microlocal) theory of so-called polyhomogeneous conormal joint asymptotic expansions. 

The purposes of Sect.~\ref{sect:hard-to-soft} require to identify a larger region of real $\tau$ where the expansion~\eqref {eq:Olver1952} is uniform as $\nu\to\infty$ through positive real values. To this end we use the uniform asymptotic expansions of Bessel functions for large order $\nu$, pioneered by Olver \cite{MR67250} in 1954 by analyzing turning points of the Bessel differential equation (cf.~\cite[Chap.~11]{MR0435697}, \cite[Chap.~VIII]{MR0460820} and, for exponential representations of the asymptotic series, also \cite[§4]{MR3735704}):
\begin{subequations}\label{eq:Olver1954}
\begin{equation}
J_\nu(\nu z) \sim \left(\frac{4\zeta}{1-z^2}\right)^{1/4} \left(\frac{\Ai(\nu^{2/3}\zeta)}{\nu^{1/3}} \sum_{k=0}^\infty \frac{A_k^*(\zeta)}{\nu^{2k}}+ \frac{\Ai'(\nu^{2/3}\zeta)}{\nu^{5/3}} \sum_{k=0}^\infty \frac{B_k^*(\zeta)}{\nu^{2k}}\right),
\end{equation}
uniformly for $z\in(0,\infty)$ as $\nu\to\infty$. Here, the parameters and coefficients are, for $0<z<1$, 
\begin{equation}
\frac{2}{3} \zeta^{3/2} = \log\left(\frac{1+\sqrt{1-z^2}}{z}\right) - \sqrt{1-z^2},
\end{equation}
and
\begin{align}
A_k^*(\zeta) &= \sum_{j=0}^{2k} \left(\frac{3}{2}\right)^j v_j \zeta^{-3j/2} U_{2k-j}\big((1-z^2)^{-1/2}\big),\\*[1mm]
B_k^*(\zeta) &= -\zeta^{-1/2}\sum_{j=0}^{2k+1} \left(\frac{3}{2}\right)^j u_j \zeta^{-3j/2} U_{2k-j+1}\big((1-z^2)^{-1/2}\big),
\end{align}
\end{subequations}
where the $U_k(x)$ are recursively defined rational polynomials of degree $3k$ (cf.~\cite[Eq.~(2.19)]{MR67250}) and $u_k, v_k$ ($u_0=v_0=1$) are the rational coefficients of the asymptotic expansions of the Airy function and its derivative in a sector containing the positive real axis:
\begin{equation}\label{eq:airyexpan}
\Ai(z) \sim \frac{e^{-\xi}}{2\sqrt{\pi}z^{1/4}} \sum_{k=0}^\infty (-1)^k \frac{u_k}{\xi^k}, \quad
\Ai'(z) \sim -\frac{z^{1/4}e^{-\xi}}{2\sqrt{\pi}} \sum_{k=0}^\infty (-1)^k \frac{v_k}{\xi^k},\quad \xi = \frac{2}{3}z^{3/2},
\end{equation}
as $z \to \infty$ within  $|\!\arg z| \leq \pi - \delta$. Note that $\zeta=\zeta(z)$ can be continued analytically to the $z$-plane cut along the negative real axis;\footnote{In particular, for positive real $z$, the thus defined
$\zeta(z)$ is a strictly monotonically decreasing real function with $\lim_{z\to 0^+} \zeta(z) = +\infty$, $\zeta(1)=0$ and $\lim_{z\to +\infty} \zeta(z) = -\infty$; cf. \cite[Eq.~(10.20.3)]{MR2723248}.} $A_k^*(\zeta)$ and $B_k^*(\zeta)$ can be continued accordingly. 
As stated in \cite[p.~342]{MR67250}, valid under the same conditions while preserving uniformity, the expansion can be repeatedly differentiated with respect to $z$. 

In particular, with $0<\delta <1$ fixed, the power series expansion
\begin{equation}\label{eq:zetaseries}
2^{-1/3}\zeta = (1-z) + \frac{3}{10}(1-z)^2 + \frac{32}{175} (1-z)^3 + \frac{1037}{7875}(1-z)^4 + \cdots 
\end{equation}
converges {\em uniformly} for $|1-z| \leq 1- \delta$ (because of the logarithmic singularity at $z=0$ the radius of convergence of this series is exactly $1$, so that this range of uniformity cannot be extended). If we put
\[
\nu z = \nu + \tau \nu^{1/3}, \text{ i.e.,}\quad z = 1 + \tau \nu^{-2/3},
\]
plugging the uniformly convergent series $\zeta(z)$ into the uniform large $\nu$ expansion \eqref{eq:Olver1954} recovers the form of the transition region expansion \eqref{eq:Olver1952} and proves that it holds uniformly for 
 \[
|\tau| \leq (1-\delta) \nu^{2/3}
\]
as $\nu\to\infty$ through positive real values. 

At the expense of considerably larger error terms, this result can be extended as follows:

\begin{lemma}\label{lem:Olver1952} For any non-negative integer $m$ and any real $\tau_0$ there holds, as $\nu \to\infty$ through positive real values,
\begin{multline}\label{eq:Olver52uniform}
J_\nu(\nu+\tau \nu^{1/3}) = 2^{1/3} \Ai(-2^{1/3}\tau)\sum_{k=0}^m \frac{A_k(\tau)}{\nu^{(2k+1)/3}} +  2^{2/3}\Ai'(-2^{1/3}\tau) \sum_{k=1}^{m} \frac{B_{k}(\tau)}{\nu^{(2k+1)/3}} \\*[2mm]  + \nu^{-1-2m/3} \cdot O\big(\exp(2^{1/3}\tau)\big),
\end{multline}
uniformly for $-\nu^{2/3}< \tau\leq \tau_0$. Here, $A_k(\tau)$ and $B_k(\tau)$ are the rational polynomials in \eqref{eq:Olver1952}. Preserving uniformity, the expansion \eqref{eq:Olver52uniform} can be repeatedly differentiated w.r.t. $\tau$.
\end{lemma}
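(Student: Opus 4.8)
The plan is to split the range $-\nu^{2/3} < \tau \le \tau_0$ into a \emph{moderate} part $|\tau| \le \nu^{1/4}$, where the claimed expansion is genuinely present and must be extracted from Olver's 1954 uniform expansion \eqref{eq:Olver1954}, and a \emph{deep left} part $-\nu^{2/3} < \tau \le -c_1(\log\nu)^{2/3}$, where the Bessel function, every displayed expansion term, and the asserted remainder $\nu^{-1-2m/3}O(e^{2^{1/3}\tau})$ are each already negligible; the two ranges overlap and together cover the full interval once $\nu$ is large. Throughout I write $z = 1 + \tau\nu^{-2/3}$ so that $\nu z = \nu + \tau\nu^{1/3}$ and $|1-z| = |\tau|\nu^{-2/3}$, and I use freely the elementary bound $|p(\xi)|\max(|\Ai(\xi)|,|\Ai'(\xi)|) \le a_p e^{-\xi}$ ($\xi\in\R$) from Section~\ref{sect:bounds} together with its higher-derivative analogues $|\Ai^{(k)}(\xi)| \le C_k e^{-\xi}$, and the classical bound $0 < J_\nu(\nu z) \le C e^{-\nu\eta(z)}$ with $\eta(z) = \tfrac23\zeta(z)^{3/2}$, read off from \eqref{eq:Olver1954}.

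For the moderate range I start from \eqref{eq:Olver1954} truncated after $N = m+1$ terms of each of the two series, whose remainder is $\nu^{-1/3-2N}$ times a prefactor times $\max(|\Ai(\nu^{2/3}\zeta)|, \nu^{-2/3}|\Ai'(\nu^{2/3}\zeta)|)$. Since $|1-z| \to 0$ uniformly here, the power series \eqref{eq:zetaseries} for $2^{-1/3}\zeta$, the companion series for $(4\zeta/(1-z^2))^{1/4}$ and for the $A_k^*(\zeta)$, $B_k^*(\zeta)$, all converge uniformly; substituting them and then Taylor-expanding $\Ai(\nu^{2/3}\zeta)$ and $\Ai'(\nu^{2/3}\zeta)$ about $-2^{1/3}\tau$ to order $J = 4m+6$ produces, upon collecting powers of $\nu^{-1/3}$, exactly the finite sums in \eqref{eq:Olver52uniform} — the identification of the graded coefficients with the rational polynomials $A_k(\tau)$, $B_k(\tau)$ being precisely the ``recovers the form of \eqref{eq:Olver1952}'' step already carried out in the text, now made effective. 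The error consists of Olver's remainder, the Airy Taylor remainder $O(\delta^J\Ai^{(J)}(\xi^\ast))$ with $\delta := \nu^{2/3}\zeta + 2^{1/3}\tau = O(\tau^2\nu^{-2/3})$ and some $\xi^\ast \ge -2^{1/3}\tau$, and the tail of the re-expansion; using the positivity of the Taylor coefficients in \eqref{eq:zetaseries} to get $\nu^{2/3}\zeta \ge -2^{1/3}\tau$ (for $\tau\le 0$; for $0\le\tau\le\tau_0$ everything is simply bounded) and hence $|\Ai^{(k)}(\nu^{2/3}\zeta)|, |\Ai^{(k)}(\xi^\ast)| \le C e^{-\nu^{2/3}\zeta} \le C e^{2^{1/3}\tau}$, one checks each piece is $\le C\nu^{-(2m+3)/3}e^{2^{1/3}\tau}$: the key point is that $|\tau|^{2J}\nu^{-2J/3} \le \nu^{-(2m+3)/3}$ on $|\tau| \le \nu^{1/4}$ by the choice $J = 4m+6$, while polynomial coefficients of lower order are harmless because $|\tau|^D e^{2^{1/3}\tau}$ is bounded for $\tau \le 0$. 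Repeated differentiability in $\tau$ follows because all series involved, Olver's $z$-differentiated expansion, and the Airy Taylor expansion may be differentiated term by term with estimates of the same shape; on any fixed compact $\tau$-set one may alternatively invoke Sher's \cite[Prop.~2.8]{Sher22}.

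For the deep left range I use that $\zeta(z) \ge 2^{1/3}(1-z)$ for $z \in (0,1]$ — which follows from positivity of the Taylor coefficients of $2^{-1/3}\zeta$ in $1-z$ on $(0,1)$, together with a direct comparison near $z=0$ where $\tfrac23\zeta^{3/2} = \log(2/z) - 1 + O(z^2)$ — so that $\nu\eta(z) = \tfrac23\nu\zeta^{3/2} \ge \tfrac23\sqrt{2}\,(-\tau)^{3/2}$. Combining this with $0 < J_\nu(\nu z) \le C e^{-\nu\eta(z)}$ and with $|\Ai(-2^{1/3}\tau)| \le C e^{-\frac23\sqrt{2}(-\tau)^{3/2}}$ for the displayed terms, all of $J_\nu(\nu z)$, $A_k(\tau)\Ai(-2^{1/3}\tau)/\nu^{(2k+1)/3}$, $B_k(\tau)\Ai'(-2^{1/3}\tau)/\nu^{(2k+1)/3}$ are $\le \tfrac12 C\nu^{-(2m+3)/3}e^{2^{1/3}\tau}$ once $-\tau \ge c_1(\log\nu)^{2/3}$ with $c_1 = c_1(m)$ large: indeed $\tfrac23\sqrt{2}\,u^{3/2} - 2^{1/3}u \ge \tfrac{2m+3}{3}\log\nu + O(1)$ for $u = -\tau \ge c_1(\log\nu)^{2/3}$, and the surviving superexponential factor $e^{-\frac13\sqrt{2}(-\tau)^{3/2}}$ absorbs the polynomial factors $(-\tau)^{\deg A_k}$. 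Thus \eqref{eq:Olver52uniform} holds trivially on this range because every term on both sides is dominated by the remainder; differentiating in $\tau$ only inserts factors $\nu^{O(1)}$ into the still superexponentially small bounds, so differentiability is automatic.

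I expect the bulk of the work — and the only genuinely delicate point — to be the uniform error estimate in the moderate range: one must track how the polynomial coefficients $A_k(\tau)$, $B_k(\tau)$ (degrees growing linearly in $k$), the discrepancy $O(\tau^2\nu^{-2/3})$ between $\nu^{2/3}\zeta$ and $-2^{1/3}\tau$, and the weight $e^{2^{1/3}\tau}$ interact, so that nothing worse than $\nu^{-1-2m/3}O(e^{2^{1/3}\tau})$ survives uniformly over a $\tau$-window that itself grows with $\nu$; this forces the numbers $J$ of Airy Taylor terms and $N$ of Olver terms to be chosen as explicit functions of $m$. By contrast the deep-left estimates and the differentiability assertions are routine once the comparison inequality $\zeta(z) \ge 2^{1/3}(1-z)$ is established.
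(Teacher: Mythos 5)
Your plan is correct and rests on the same two tools as the paper's proof — Olver's uniform Airy-type expansion \eqref{eq:Olver1954}, re-expanded near $z=1$ to recover \eqref{eq:Olver1952}, plus superexponential Airy decay to absorb everything in the far tail — but your decomposition of the $\tau$-interval is genuinely different and in some respects more explicit. The paper splits only once, at $\tau=-\tfrac34\nu^{2/3}$: in part (I) it exploits the full radius of convergence of \eqref{eq:zetaseries}, so that the re-expansion "recovers \eqref{eq:Olver1952}" on essentially the whole interval with the remainder being the first omitted term $\nu^{-1-2m/3}O\big(A_{m+1}(\tau)\Ai+B_{m+1}(\tau)\Ai'\big)$, which is then killed by the $e^{2^{1/3}\tau}$ weight; part (II) is the short stretch where $0<z\le\tfrac14$ and one just quotes $\tfrac23\zeta^{3/2}\ge1.095$. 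You instead take a much narrower ``moderate'' band $|\tau|\le\nu^{1/4}$, inside which you control the error by truncating the Airy Taylor expansion at an explicit order $J=4m+6$ so that $\delta^J=O(\tau^{2J}\nu^{-2J/3})=O(\nu^{-(2m+3)/3})$, and a correspondingly enlarged ``deep left'' band $\tau\le-c_1(\log\nu)^{2/3}$ reaching almost to the turning point. Your near-range argument is more self-contained (it actually proves the uniform re-expansion with its error estimate rather than asserting it, which the paper does somewhat summarily with ``as argued above''); the price is that your deep-left estimate has to be pushed into the regime $z\to1^-$, which the paper avoids.

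The one place that needs repair is the inequality $\zeta(z)\ge2^{1/3}(1-z)$ on $(0,1]$, which you justify by ``positivity of the Taylor coefficients of $2^{-1/3}\zeta$ in $1-z$.'' Positivity of all those coefficients is not established in the paper or the cited references, and it is not obvious from the defining ODE $\zeta^{1/2}\zeta'=-\sqrt{1-z^2}/z$: the computation $\tfrac23\zeta^{3/2}=\sqrt2\int_0^{1-z}s^{1/2}g(s)\,ds$ with $g$ having positive coefficients gives positivity of the coefficients of $\zeta^{3/2}$, but taking the $(2/3)$-power destroys that structure. Fortunately you do not need the sharp constant $2^{1/3}$: the continuous positive function $z\mapsto\zeta(z)/(1-z)$ extends to $z=1$ with value $2^{1/3}$ and tends to $+\infty$ as $z\to0^+$, hence is bounded below by some $c>0$ on $(0,1]$ by compactness, and any such $c$ works in the deep-left estimate (the threshold $c_1(m)$ simply acquires a $c$-dependence). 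Relatedly, when you ``read off'' $J_\nu(\nu z)\le Ce^{-\nu\eta(z)}$ from \eqref{eq:Olver1954}, that step uses $\Ai(\xi)=O(\xi^{-1/4}e^{-\frac23\xi^{3/2}})$ and so requires $\nu^{2/3}\zeta$ bounded away from $0$; the cruder bound $\Ai(\xi)\le e^{-\xi}$ of Sect.~\ref{sect:bounds} would only give $J_\nu(\nu z)\le C\nu^{-1/3}e^{-c(-\tau)}$, which cannot supply the extra $\nu^{-(2m+2)/3}$. Your choice $-\tau\ge c_1(\log\nu)^{2/3}$ ensures $\nu^{2/3}\zeta\ge cc_1(\log\nu)^{2/3}\to\infty$, so this is fine once made explicit. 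With these two repairs the argument is complete.
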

\begin{proof} Let us write 
\[
J_\nu(\nu+\tau \nu^{1/3}) = E_m(\nu; \tau) + R_m(\nu;\tau),
\]
where $E_m$ denotes the sum of the expansion terms in \eqref{eq:Olver52uniform} and $R_m$ is the remainder. We split the range of $\tau$ into the two parts
\[
\text{(I): } -\frac{3}{4} \nu^{2/3} \leq \tau\leq \tau_0, \qquad
\text{(II): } -\nu^{2/3} < \tau \leq -\frac{3}{4} \nu^{2/3}.
\]
In part (I), as argued above for $\delta=1/4$,  the expansion \eqref{eq:Olver1952} is uniformly valid, that is,
\[
R_m(\nu;\tau) = \nu^{-1-2m/3} \cdot O\left( A_{m+1}(\tau) \Ai\big(-2^{1/3} \tau\big)\right) + \nu^{-1-2m/3} O\left(B_{m+1}(\tau) \Ai'\big(-2^{1/3} \tau\big)\right)
\]
uniformly for these $\tau$. Now, the superexponential decay of the Airy function $\Ai(x)$ and its derivative as $x\to \infty$ through positive values, as displayed in the expansions \eqref{eq:airyexpan}, imply the asserted uniform bound 
\[
R_m(\nu;\tau) =  \nu^{-1-2m/3} \cdot O\big(\exp(2^{1/3}\tau)\big)
\]
in part (I) of the range of $\tau$.

On the other hand, in part (II) of the range of $\tau$, we infer from \eqref{eq:airyexpan} that for $0<\epsilon < 1/2$
\[
E_m(\nu;\tau) = O\big(\exp(-(3\nu^{2/3}/4)^{1+\epsilon}\big) = \nu^{-1-2m/3} \cdot O\big(\exp(2^{1/3}\tau)\big).
\]
We now show that also
\begin{equation}\label{eq:besselbound}
J_\nu\big(\nu + \tau \nu^{1/3} \big) = \nu^{-1-2m/3} \cdot O\big(\exp(2^{1/3}\tau)\big)
\end{equation}
uniformly in part (II) of the range of $\tau$, so that all terms in \eqref{eq:Olver52uniform} are absorbed in the asserted error term. Here we observe 
\[
0 < z = 1 +\tau \nu^{2/3} \leq \frac14,\qquad 1.095\cdots \leq \frac{2}{3}\zeta^{3/2} < \infty,
\]
so that the leading order terms in \eqref{eq:Olver1954} and \eqref{eq:airyexpan} yield the bound
\[
J_\nu(\nu z) \sim \nu^{-1/2} \left(\frac{4}{1-z^2}\right)^{1/4} (\nu^{2/3} \zeta)^{1/4}\Ai(\nu^{2/3}\zeta) = \nu^{-1/2} \cdot O\big(\exp(- \tfrac{2}{3}\zeta^{3/2}\nu)\big),
\]
uniformly for the $\tau$ in (II). Because of $\frac{2}{3}\zeta^{3/2} \geq 1.095$ and $-\nu \leq -2^{1/3}\nu^{2/3} < 2^{1/3}\tau$ for $\nu\geq 2$, this bound can be relaxed, as required, to
\[
J_\nu\big(\nu + \tau \nu^{1/3} \big) = J_\nu(\nu z) =  \nu^{-1-2m/3} \cdot O\big(\exp(2^{1/3}\tau)\big).
\]

Finally, the claim about the derivatives follows from the repeated differentiability of the uniform expansion \eqref{eq:Olver1954} and the differential equation of the Airy function, $\Ai''(x) = x \Ai(x)$ (so that the general form of the expansions underlying the proof does not change).
\end{proof}

\begin{remark}\label{rem:besselshort} The cases $m=0$ and $m=1$ of Lemma~\ref{lem:Olver1952} have previously been stated as \cite[Eq.~(4.11)]{MR1986402}
and \cite[Eq.~(2.10)]{arxiv.2205.05257}. However, the proofs given there are incomplete: in \cite[p.~2978]{MR1986402} 
the power series \eqref{eq:zetaseries} is used up to the boundary of its circle of convergence, so that uniformity becomes an issue; whereas in \cite[p.~9]{arxiv.2205.05257} it is claimed that Olver's transition expansion \eqref{eq:Olver1952} would be uniform w.r.t. $\tau \in (-\infty,\tau_0]$, which is not the case.\footnote{\label{foot:bessel_counter}Besides that the principal branch of $J_\nu(z)$ ($\nu\not\in\Z$) is not defined at negative real $z$, there is a counter-example for  $\nu=n$ being a positive integer: choosing $\tau=0$ in \eqref{eq:Olver1952} gives to leading order
\[
(-1)^nJ_n(-n) = J_n(n) \sim 2^{1/3}n^{-1/3} \Ai(0)\qquad (n\to\infty),
\]
which differs significantly from applying \eqref{eq:Olver1952} formally to $\tau=-2n^{2/3}$ (for which $n+\tau n^{1/3}=-n$).}
\end{remark}

\section{Compilation of the Shinault--Tracy Table and a General Conjecture}\label{app:ST}

\subsection{The Shinault--Tracy table}
Shinault and Tracy \cite[p.~68]{MR2787973} tabulated, for $0 \leq j+k\leq 8$, explicit representations of the terms
\[
u_{jk}(s) = \tr\big((I-K_0)^{-1} \Ai^{(j)} \otimes \Ai^{(k)} \big)\big|_{L^2(s,\infty)}
\] 
as linear combinations of the form (called a {\em linear $F$-form of order $n$} here)
\begin{equation}\label{eq:generalFform}
T_n(s) = p_1(s) \frac{F'(s)}{F(s)} + p_2(s) \frac{F''(s)}{F(s)} + \cdots + p_{n}(s) \frac{F^{(n)}(s)}{F(s)},
\end{equation}
where $p_1,\ldots,p_n$ are certain rational polynomials (depending on $j$, $k$) and $n=j+k+1$. Though they sketched a method to validate each entry of their table, Shinault and Tracy did not describe how they had found those entries in the first place. However, by ``reverse engineering'' their validation method, we can give an algorithm to compile such a table.

Starting point is the Tracy--Widom theory \cite{MR1257246} of representing $F$ in terms of the Hastings--McLeod solution $q(s)$ of Painlevé II,
\begin{subequations}
\begin{align}
q''(s) &= s q(s) +2 q(s)^3, \quad\; q(s) \sim \Ai(s) \quad (s\to\infty),\\*[2mm]
u_{00}(s) &= \frac{F'(s)}{F(s)} = q'(s)^2 - s q(s)^2 - q(s)^4.
\end{align}
\end{subequations} 
From these formulae we get immediately that
\[
F^{(n)}/F \in \Q[s] [q,q'],
\]
where $\Q[s] [q,q']$ denotes the space of polynomials in $q$ and $q'$ with coefficients being rational polynomials. Because $q$ satisfies Painlevé II, $\Q[s] [q,q']$ is closed under differentiation. For a term $T \in \Q[s] [q,q']$ we define the $q$-degree $\deg_q T$ to be the largest $\alpha+\beta$ of a $q$-monomial
\[
q(s)^\alpha q'(s)^\beta
\]
that appears in expanding $T$. Inductively, the general linear $F$-form $T_n(s)$ of order $n$ satisfies
\begin{align*}
\deg_q T_n = \deg_q T_n' &= 4n,\\*[1mm]
\#\text{($q$-monomials with non-zero coefficient in $T_n$)} &= 2(n^2 - n + 2) \quad (n\geq 2), \\*[1mm]
\#\text{($q$-monomials with non-zero coefficient in $T_n'$)} &= 2n^2+1.
\end{align*}
By advancing the set of formulae of \cite{MR1257246}, Shinault and Tracy \cite[pp.~64--66]{MR2787973} obtained
\begin{subequations}\label{eq:TWextended}
\begin{align}
q_0(s) &= q(s), \quad
q_1(s) = q'(s) + u_{00}(s) q(s),\\*[2mm]
q_n(s) &= (n-2) q_{n-3}(s) + s q_{n-2}(s) - u_{n-2,1}(s) q_0(s) + u_{n-2,0}(s) q_1(s), \\*[2mm]
u_{jk}'(s) &= -q_j(s) q_k(s),
\end{align}
\end{subequations} 
where $n=2,3,\ldots$ (ignoring the term $(n-2) q_{n-3}(s)$ if $n=2$). It follows that
\[
u_{n-2,0}, u_{n-2,1} \in  \Q[s] [q,q'] \quad (2\leq n \leq j,k)\quad \Rightarrow \quad u'_{jk} \in  \Q[s] [q,q'].
\]
This suggests the following algorithm to recursively compute the linear $F$-form of order $n$ representing $u_{jk}$ (if such a form exists in the first place): 
suppose such forms have already been found for all smaller $j+k$, we have to find polynomials $p_1,\ldots,p_n \in \Q[s]$ such that    
\begin{equation}\label{eq:linearFForm}
u_{jk} = p_1 \frac{F'}{F} + p_2 \frac{F''}{F} + \cdots + p_{n} \frac{F^{(n)}}{F}.
\end{equation}
By differentiating and then comparing the coefficients of all  $q$-monomials
we get an overdetermined linear system of equations in $\Q[s]$ of size $(2n^2+1)\times 2n$ that is to be satisfied by the polynomials $p_1,\ldots,p_n, p_1',\ldots,  p_n'$. 

For instance, the term $u_{30}$ (as used in Sect.~\ref{sect:functionalform}) can be calculated from the previously established linear $F$-forms (cf. \eqref{eq:F2der} and \cite[p.~68]{MR2787973})
\[
u_{10}(s) = \frac{1}{2}\frac{F''(s)}{F(s)},\qquad u_{11}(s) = -\frac{s F'(s)}{F(s)} + \frac{F'''(s)}{3F(s)}
\]
by setting up the  $33\times 8$ linear system displayed in Table~\ref{tab:linSys_u30}
\begin{table}[tbp]
\caption{The $33\times 8$ linear system for constructing the entry $u_{30}(s)$ in the table \cite[p.~68]{MR2787973}.}
\label{tab:linSys_u30}
\vspace*{-3mm}
{\small
\[
\left(
\begin{array}{cccccccc}
 0 & 0 & 2 & 0 & -1 & 0 & 0 & 2 \\*[1mm]
 0 & 0 & 0 & 8 & 0 & -1 & 0 & 0 \\*[1mm]
 0 & 0 & 0 & 0 & 0 & 0 & -1 & 0 \\*[1mm]
 0 & 0 & 0 & 0 & 0 & 0 & 0 & -1 \\*[1mm]
 0 & 2 & 0 & 8 s & 0 & 0 & 2 & 0 \\*[1mm]
 0 & 0 & 6 & 0 & 0 & 0 & 0 & 8 \\*[1mm]
 0 & 0 & 0 & 12 & 0 & 0 & 0 & 0 \\*[1mm]
 1 & 0 & 2 s & 2 & s & 1 & 0 & 2 s \\*[1mm]
 0 & 2 & 0 & 0 & 0 & 2 s & 3 & 0 \\*[1mm]
 0 & 0 & 3 & 0 & 0 & 0 & 3 s & 6 \\*[1mm]
 0 & 0 & 0 & 4 & 0 & 0 & 0 & 4 s \\*[1mm]
 0 & 0 & -6 s & 4 & 0 & 0 & 0 & -8 s \\*[1mm]
 0 & 0 & 0 & -24 s & 0 & 0 & 0 & 0 \\*[1mm]
 0 & -2 s & 1 & -8 s^2 & 1 & -s^2 & -3 s & 1 \\*[1mm]
 0 & 0 & -6 s & -4 & 0 & 2 & -3 s^2 & -12 s \\*[1mm]
 0 & 0 & 0 & -12 s & 0 & 0 & 3 & -6 s^2 \\*[1mm]
 0 & 0 & 0 & 0 & 0 & 0 & 0 & 4 \\*[1mm]
 0 & 0 & -6 & 12 s^2 & 0 & 0 & 0 & -8 \\*[1mm]
 0 & 0 & 0 & -24 & 0 & 0 & 0 & 0 \\*[1mm]
 0 & -2 & 3 s^2 & -12 s & 0 & -2 s & s^3-3 & 6 s^2 \\*[1mm]
 0 & 0 & -6 & 12 s^2 & 0 & 0 & -6 s & 4 s^3-12 \\*[1mm]
 0 & 0 & 0 & -12 & 0 & 0 & 0 & -12 s \\*[1mm]
 0 & 0 & 0 & 24 s & 0 & 0 & 0 & 0 \\*[1mm]
 0 & 0 & 6 s & -4 s^3-4 & 0 & -1 & 3 s^2 & 12 s-s^4 \\*[1mm]
 0 & 0 & 0 & 24 s & 0 & 0 & -3 & 12 s^2 \\*[1mm]
 0 & 0 & 0 & 0 & 0 & 0 & 0 & -6 \\*[1mm]
 0 & 0 & 0 & 12 & 0 & 0 & 0 & 0 \\*[1mm]
 0 & 0 & 3 & -12 s^2 & 0 & 0 & 3 s & 6-4 s^3 \\*[1mm]
 0 & 0 & 0 & 12 & 0 & 0 & 0 & 12 s \\*[1mm]
 0 & 0 & 0 & -12 s & 0 & 0 & 1 & -6 s^2 \\*[1mm]
 0 & 0 & 0 & 0 & 0 & 0 & 0 & 4 \\*[1mm]
 0 & 0 & 0 & -4 & 0 & 0 & 0 & -4 s \\*[1mm]
 0 & 0 & 0 & 0 & 0 & 0 & 0 & -1
\end{array}
\right)\left(
\begin{array}{c}
 p_1(s) \\*[1mm]
 p_2(s) \\*[1mm]
 p_3(s) \\*[1mm]
 p_4(s) \\*[1mm]
 p_1'(s) \\*[1mm]
 p_2'(s) \\*[1mm]
 p_3'(s) \\*[1mm]
 p_4'(s) 
\end{array}
\right) =
\left(
\begin{array}{c}
 0 \\*[1mm]
 0 \\*[1mm]
 0 \\*[1mm]
 0 \\*[1mm]
 s \\*[1mm]
 0 \\*[1mm]
 \frac{1}{2} \\*[1mm]
 1 \\*[1mm]
 \frac{4 s}{3} \\*[1mm]
 0 \\*[1mm]
 \frac{1}{6} \\*[1mm]
 \frac{1}{6} \\*[1mm]
 -s \\*[1mm]
 -\frac{4 s^2}{3}  \\*[1mm]
 \frac{1}{2} \\*[1mm]
 -\frac{s}{2} \\*[1mm]
 0 \\*[1mm]
 \frac{s^2}{2} \\*[1mm]
 -1 \\*[1mm]
 -\frac{11 s}{6}\\*[1mm]
 \frac{s^2}{2} \\*[1mm]
 -\frac{1}{2} \\*[1mm]
 s \\*[1mm]
 -\frac{s^3}{6}-\frac{1}{2} \\*[1mm]
 s \\*[1mm]
 0 \\*[1mm]
 \frac{1}{2} \\*[1mm]
 -\frac{s^2}{2} \\*[1mm]
 \frac{1}{2} \\*[1mm]
 -\frac{s}{2} \\*[1mm]
 0 \\*[1mm]
 -\frac{1}{6} \\*[1mm]
 0
\end{array}
\right)
\]}
\rule{\textwidth}{0.4pt}
\end{table}%
which, as a linear system in $8$ unknown polynomials, is uniquely solved by 
\[
\left(
\begin{array}{cccccccc}
 7/12 & s/3 & 0 & 1/24 & 0 & 1/3 & 0 & 0 
\end{array}
\right)^T \in \Q[s]^{\,8}.
\]
Since the last four entries are the derivatives of the first four, this solution is consistent with the form of solution we are interested in. Generally, we first solve the linear system for 
\[
\big(p_1,\ldots,p_n,r_1,\ldots,r_n\big) \in  \Q[s]^{2n}
\] 
and then check for consistency $p_m'=r_m$, $m=1,\ldots,n$. If consistent, such a solution also satisfies \eqref{eq:linearFForm} by integrating its differentiated form: the constant of integration vanishes because both sides decay (rapidly) to zero as $s\to\infty$. In the example we have thus obtained
\[
u_{30}(s) = \frac{7F'(s)}{12F(s)} +\frac{s F''(s)}{3F(s)} + \frac{F^{(4)}(s)}{24F(s)}.
\]

So, two effects of integrability must happen for this recursive algorithm to work properly: 
\begin{itemize}\itemsep=3pt
\item the overdetermined $(2n^2+1)\times 2n$ linear system has actually a solution in $\Q[s]^{2n}$,
\item the solution is consistent (the last $n$ entries being the derivatives of the first $n$ ones).
\end{itemize}
Because of the algebraic independence of the solution $q,q'$ of Painlevé II over $\Q[s]$ (cf.~\cite[Thm.~21.1]{MR1960811}), the converse is also true: if there is a representation as a linear $F$-form at all, the algorithm succeeds by finding its unique coefficient polynomials.   

Based on a CAS implementation of the algorithm, we can report that the $u_{jk}$ are represented as linear $F$-forms of degree $j+k+1$ for $0\leq j+k\leq 50$,\footnote{A table of the resulting linear $F$-forms comes with the source files at \url{https://arxiv.org/abs/2301.02022}.} adding further evidence to the conjecture of Shinault and Tracy; a general proof, however, would require theoretical insight into the underlying integrability of \eqref{eq:TWextended}. If true, an induction shows that
\[
\deg_q u_{jk} = 4(j+k+1),\qquad \deg_q u_{jk}' = 4(j+k)+2.
\]

\subsection{A general conjecture} As a matter of fact, even certain {\em nonlinear} rational polynomials of the terms $u_{jk}$, such as those representing $\tilde F_j/F$ in \eqref{eq:F22tildeVanilla} and \eqref{eq:F3tildeU}, can be represented as {\em linear} $F$-forms. 

Namely, Thm.~\ref{thm:detexpan}, Sect.~\ref{sect:functionalform} and the perturbation theory of finite-dimensional determinants imply that  $\tilde F_j/F$ can be written as a rational linear combination of the minors of $(u_{jk})_{j,k=0}^\infty$,
that is, of determinants of the form
\begin{equation}\label{eq:conjdet}
\begin{vmatrix}
u_{j_1 k_1} & \cdots & u_{j_1 k_m} \\*[1mm]
\vdots & & \vdots \\*[1mm]
u_{j_m k_1} & \cdots & u_{j_m k_m}
\end{vmatrix}.
\end{equation}
We are thus led to the following conjecture (checked computationally up to order $n=50$):

\subsection*{Conjecture} Each minor of the form \eqref{eq:conjdet} can be represented as linear $F$-forms of order
\begin{equation}\label{eq:order}
n = j_1 +\cdots + j_m + k_1 +\cdots +  k_m + m.
\end{equation}
(Here, the case $m=1$ corresponds to the conjecture of Shinault and Tracy.) Aside from those terms which can be recast as linear combinations of minors with coefficients in $\Q[s]$, there are no other polynomial expressions of the $u_{jk}$ with coefficients in $\Q[s]$ that can be represented as linear $F$-forms.
\medskip

There are abundant examples of nonlinear rational polynomials of the terms $u_{jk}$ which cannot be represented as linear $F$-forms. For instance, 
as we have checked computationally, {\em none} of the terms (which are subterms of the minors shown below)
\[
u_{10}^2, \quad u_{00} u_{11}, \quad u_{10} u_{21},\quad  u_{11} u_{20}, \quad u_{00} u_{31}, \quad u_{10} u_{30}
\]
can be represented as linear $F$-forms of an order up to $n=50$.
 
Algorithmically, based on the already tabulated $u_{jk}$, the linear $F$-form of order $n$ for a given term $T$ such as \eqref{eq:conjdet} can be found, if existent, as follows: by expanding the equation
\[
T = p_1 \frac{F'}{F} + p_2 \frac{F''}{F} + \cdots + p_{n} \frac{F^{(n)}}{F}
\]
in $\Q[s][q,q']$ and comparing coefficients of the $q$-monomials, we get an overdetermined linear system of size $2(n^2-n+2)\times n$ for the 
coefficient polynomials $p_1,\ldots,p_n \in \Q[s]$. If there is a solution, we have found the linear $F$-form. If not, there is no such form of order $n$.

For instance, the nonlinear part in \eqref{eq:F22tildeVanilla} is
\[
u_{00}(s) u_{11}(s) - u_{10}(s)^2 
= \begin{vmatrix}
 u_{00}(s) & u_{01}(s) \\*[1mm]
 u_{10}(s) & u_{11}(s) 
 \end{vmatrix},
\]
which yields, for the order $n=4$ taken from \eqref{eq:order}, the $28\times 4$ linear system in $\Q[s]$ displayed in Table~\ref{tab:linSys_Detu01}.
\begin{table}[tbp]
\caption{The $28\times 4$ linear system for representing $u_{00}(s) u_{11}(s) - u_{10}(s)^2$ as in \eqref{eq:det00}.}
\label{tab:linSys_Detu01}
\vspace*{-3mm}
{\small
\[
\left(
\begin{array}{cccc}
 -1 & 0 & 0 & 2 \\*[1mm]
 0 & -1 & 0 & 0 \\*[1mm]
 0 & 0 & -1 & 0 \\*[1mm]
 0 & 0 & 0 & -1 \\*[1mm]
 0 & 0 & 2 & 0 \\*[1mm]
 0 & 0 & 0 & 8 \\*[1mm]
 s & 1 & 0 & 2 s \\*[1mm]
 0 & 2 s & 3 & 0 \\*[1mm]
 0 & 0 & 3 s & 6 \\*[1mm]
 0 & 0 & 0 & 4 s \\*[1mm]
 0 & 0 & 0 & -8 s \\*[1mm]
 1 & -s^2 & -3 s & 1 \\*[1mm]
 0 & 2 & -3 s^2 & -12 s \\*[1mm]
 0 & 0 & 3 & -6 s^2 \\*[1mm]
 0 & 0 & 0 & 4 \\*[1mm]
 0 & 0 & 0 & -8 \\*[1mm]
 0 & -2 s & s^3-3 & 6 s^2 \\*[1mm]
 0 & 0 & -6 s & 4 s^3-12 \\*[1mm]
 0 & 0 & 0 & -12 s \\*[1mm]
 0 & -1 & 3 s^2 & 12 s-s^4 \\*[1mm]
 0 & 0 & -3 & 12 s^2 \\*[1mm]
 0 & 0 & 0 & -6 \\*[1mm]
 0 & 0 & 3 s & 6-4 s^3 \\*[1mm]
 0 & 0 & 0 & 12 s \\*[1mm]
 0 & 0 & 1 & -6 s^2 \\*[1mm]
 0 & 0 & 0 & 4 \\*[1mm]
 0 & 0 & 0 & -4 s \\*[1mm]
 0 & 0 & 0 & -1 
\end{array}
\right)\left(
\begin{array}{c}
 p_1(s) \\*[1mm]
 p_2(s) \\*[1mm]
 p_3(s) \\*[1mm]
 p_4(s) 
 \end{array}
\right)
=
\left(
\begin{array}{c}
 0 \\*[1mm]
 \frac{s}{3} \\*[1mm]
 0 \\*[1mm]
 -\frac{1}{12} \\*[1mm]
 0 \\*[1mm]
 \frac{2}{3} \\*[1mm]
 0 \\*[1mm]
 -\frac{2 s^2}{3}  \\*[1mm]
 \frac{1}{2} \\*[1mm]
 \frac{s}{3} \\*[1mm]
 -\frac{2 s}{3}  \\*[1mm]
 \frac{s^3}{3}+\frac{1}{4} \\*[1mm]
 -\frac{5 s}{3}  \\*[1mm]
 -\frac{s^2}{2} \\*[1mm]
 \frac{1}{3} \\*[1mm]
 -\frac{2}{3} \\*[1mm]
 \frac{7 s^2}{6} \\*[1mm]
 \frac{s^3}{3}-1 \\*[1mm]
 -s \\*[1mm]
 \frac{4 s}{3}-\frac{s^4}{12} \\*[1mm]
 s^2 \\*[1mm]
 -\frac{1}{2} \\*[1mm]
 \frac{1}{2}-\frac{s^3}{3} \\*[1mm]
 s \\*[1mm]
 -\frac{s^2}{2} \\*[1mm]
 \frac{1}{3} \\*[1mm]
 -\frac{s}{3} \\*[1mm]
 -\frac{1}{12} 
\end{array}
\right).
\]}
\rule{\textwidth}{0.4pt}
\end{table}%
Its unique solution is 
\[
\left(
\begin{array}{cccc}
 \frac{1}{6} & -\frac{s}{3} & 0 & \frac{1}{12} \\
\end{array}
\right)^T \in \Q[s]^4
\]
so that we obtain the linear $F$-form
\begin{equation}\label{eq:det00}
 \begin{vmatrix}
 u_{00}(s) & u_{01}(s) \\*[1mm]
 u_{10}(s) & u_{11}(s)
 \end{vmatrix} =  \frac{F'(s)}{6F(s)} -\frac{sF''(s)}{3F(s)} + \frac{F^{(4)}(s)}{12F(s)}.
\end{equation}
Likewise we see that the integrability displayed in the evaluation of \eqref{eq:F3tilde} is based on the fact that the two minors which appear as subexpressions can be represented as linear $F$-forms; indeed, in both cases \eqref{eq:order} yields the order $n=6$ and by solving the corresponding $64\times 6$ linear systems  we get\footnote{Further examples (with minors of size $m=3,4,5$ and of order up to $n=25$) can be found in a Mathematica notebook that comes with the source files at \url{https://arxiv.org/abs/2301.02022}.}
\begin{subequations}\label{eq:det22}
\begin{align}
\begin{vmatrix}
 u_{10}(s) & u_{11}(s) \\*[2mm]
 u_{20}(s) & u_{21}(s) 
 \end{vmatrix}  &= -\frac{sF'(s)}{18F(s)}  +  \frac{s^2F''(s)}{9F(s)} -\frac{F'''(s)}{24F(s)}  -\frac{sF^{(4)}(s)}{18F(s)}  + \frac{F^{(6)}(s)}{144F(s)},\\*[3mm]
\begin{vmatrix}
 u_{00}(s) & u_{01}(s) \\*[2mm]
 u_{30}(s) & u_{31}(s) 
 \end{vmatrix}  &= \frac{sF'(s)}{10F(s)} -\frac{s^2F''(s)}{5F(s)} -\frac{3F'''(s)}{40F(s)} + \frac{F^{(6)}(s)}{80F(s)}.
\end{align}
\end{subequations}
\begin{remark}
Other applications of the technique discussed here can be found in \cite[§3.3]{2306.03798}.
\end{remark}

\bibliographystyle{spmpsci}
\bibliography{paper}

\end{document}